\newtheorem{definition}{Definition}
\newtheorem{proposition}{Proposition}
\newtheorem{lemma}{Lemma}
\newtheorem{remark}{Remark}
\title{An approximate dynamic programming approach to {the admission control of elective patients}}
\date{}
\begin{document}

\maketitle
\begin{center}
	Jian Zhang$^{a,b,*}$, Mahjoub Dridi$^b$, Abdellah El Moudni$^b$\par
\end{center}\vspace{5pt}

\noindent$^a$ School of Industrial Engineering, Eindhoven University of Technology, 5600MB Eindhoven, Netherlands\newline\newline
$^b$ Laboratoire Nanomédecine, Imagerie et Thérapeutiques, Université Bourgogne Franche-Comté, 
UTBM, Rue Thierry Mieg, 90010 Belfort cedex, France\newline\newline
* Corresponding author \newline\newline  
Email addresses: j.zhang4@tue.nl (J. Zhang) 
\par\setlength\parindent{7.7em} mahjoub.dridi@utbm.fr (M. Dridi)
\par\setlength\parindent{7.7em} abdellah.el-moudni@utbm.fr (A. El Moudni)
\newpage

\begin{abstract}
\normalsize\noindent
In this paper, we propose an approximate dynamic programming {(ADP)} {algorithm} to solve a Markov decision process (MDP) formulation for {the admission control of elective patients}. To manage the elective patients from multiple specialties equitably and efficiently, we establish a waiting list and assign each patient a time-dependent dynamic priority {score}. Then, {taking the random arrivals of patients} into account, sequential decisions are made on a weekly basis. At the end of each week, we select the patients to be treated in the following week from the waiting list. By minimizing the cost function of {the} MDP over {an} infinite horizon, we seek {to achieve} the best trade-off between the patients' waiting times and the over-utilization of {surgical} resources. Considering the curses of dimensionality resulting from the large {scale} of {realistically sized} problems, we first analyze the structural properties of the MDP and propose an algorithm that facilitates the search for {best} actions. {We then} develop a novel {reinforcement-learning-based ADP algorithm} as the solution technique. Experimental results reveal that the proposed algorithms consume much less computation time in comparison with {that required by} conventional dynamic programming methods. {Additionally, the algorithms are shown to be} capable of {computing high-quality} near-optimal policies for {realistically sized} problems. 

\noindent\textbf{\emph{Keywords}}: {patient admission control}, operating theatre planning, Markov decision process, approximate dynamic programming, recursive least-squares temporal difference learning
\end{abstract}

\setlength\parindent{1.75em}
\section{Introduction}

{Globally, the aging population and a rising quality of life are driving the demand for health services to increase rapidly, leading to shortages of medical resources and imposing a heavy financial burden on national governments. In the United States, the healthcare expenditure has been increasing continually, reaching \$3.5 trillion in 2017 and accounting for 17.9\% of the gross domestic product \shortcite{CMS}. The same figures for Australia in 2015–2016 were \$170 billion and 10.3\%, respectively, while health expenditure increased (by 50\%) much faster than the population growth (17\%) \shortcite{AIHW}. During the period from 2010 to 2017, China increased its government spending on health and the number of medical personnel employed by 163\% and 43\%, respectively. However, its health expenditure per capita was still much lower than that of developed countries and the increase in its health service capacity was not enough to cope with the rising demands on the service \shortcite{xiao2016stochastic,NBSC}. The same challenges have also been faced by European countries. For example, Portugal's surgical demand grew by 43.7\% from 2006 to 2014, while the median waiting time for surgery reached 3.0 months and 12\% of patients waited longer than their clinically recommended maximum waiting time \shortcite{marques2017different}.

In order to satisfy a growing demand for health services and to slow down the increase in health expenditure, hospital managers should improve the efficiency and quality of healthcare activities while reducing the hospitals' expenditures as much as possible. In a hospital, the operating theatre (OT) is generally considered to be both the main revenue center as well as the most expensive department since providing surgical services consumes more than 40\% of the hospital's budget and contributes a similarly large proportion to the hospital's total revenues \shortcite{denton2007optimization}. Therefore, the management of OT and the scheduling of surgeries have drawn much attention from both researchers and practitioners.

The complexity of OT management problems is the result of many factors including the expensiveness and shortage of OT resources \shortcite{rath2017integrated}, the conflicting interests of different stakeholders (e.g., patients and hospitals) \shortcite{marques2017different}, and the {uncertainty} associated with surgical activities \shortcite{banditori2013combined,jebali2015stochastic}. For these reasons, researchers have developed and applied various operations research methodologies to cope with OT management problems. In the various relevant works, OT management decisions} are generally divided into three hierarchical levels \shortcite{guerriero2011operational,zhu2019operating}: {the} strategic level determines the distribution of OR capacity among different specialties on a long-term basis; {the} tactical level involves the development of a master surgery schedule (MSS) for one or several months; {the} operational level concerns the assignment of a definite date {and a specific OR} for each {elective} surgery (advance scheduling) and the intra-day sequencing of { the scheduled surgeries} (allocation scheduling). The three decision levels are interrelated since each level depends on the decisions made at the {higher} level \shortcite{koppka2018optimal}.

This paper addresses {the admission control of elective patients} at the operational level. It is assumed that the allocation of OT resources among specialties {is} already determined by the strategic level and {that} an MSS has been fixed {at} the tactical level. A dynamic waiting list is built to manage the elective patients according to their specialties, urgency {coefficients} and {actual} waiting times. {At the end of each week, the surgery planner updates the waiting list by adding the newly arrived patients and removing the postoperative ones, then determines the number and type of elective cases that will be performed in the next week.} {Our objective is to shorten patients' waiting times and to optimize the utilization of OT resources{, including ORs and the recovery beds in a surgical intensive care unit (SICU)}.} Emergency cases are not involved in {the problem studied in this paper because} they can be treated by dedicated facilities \shortcite{dios2015decision,guido2017hybrid}. {Since} uncertainty is a key feature of surgical activities and significantly impacts the efficiency and quality of surgical services \shortcite{guerriero2011operational,jebali2015stochastic}, three major sources of {uncertainty} are considered in {our mathematical model and/or our numerical experiments: new patient arrivals, surgery durations, and the length of stay (LOS) of postoperative patients in} {the SICU}. 

{The patient admission control problem studied in this paper can be regarded as a subproblem of advance scheduling. In a typical advance scheduling problem, the {surgery planner} determines the patients to be treated in the current planning period (usually one week) and assigns these patients to specific ORs and surgery dates \shortcite<e.g., >{min2010scheduling,jebali2015stochastic,neyshabouri2017two,marques2017different,moosavi2018scheduling}. Such a problem is usually formulated as a pure mathematical programming model that only optimizes the cost of one single planning period without considering the {impact on} subsequent periods. However, two consecutive planning periods are always correlated since the postponed surgeries of the present period will continue to incur waiting costs or surgery costs in the next period. Therefore, optimizing the costs of each period separately cannot guarantee global optimality. In contrast, a patient admission control problem is usually formulated as a Markov decision process (MDP) \shortcite<e.g., >{patrick2008dynamic,min2010elective,min2014managing,astaraky2015simulation,truong2015optimal}. It focuses on the management of the patient waiting list and determines the selection of patients to be treated in each planning period (usually one week or one day). Though the MDP model does not address intra-week or intra-day assignment decisions, it optimizes the flow of patients and minimizes the expected total costs over an infinite horizon. Hence, the optimal policy of {an} MDP offers a better long-term performance than the myopic policy provided by a pure mathematical programming model. 

In this paper, we propose an MDP model for patient admission control that considers the needs of multiple specialties, the limited capacity of ORs and {SICU}, as well as time-dependent dynamic patient {priority scores}. Sequential decisions are made at the end of each week that determine the patients to be treated in the following week. Since the MDP model does not capture intra-week scheduling, hospital-related costs (incurred by overusing ORs and the SICU) cannot be exactly computed. Hence, for a given selection of patients, the cost function of {the MDP model} computes the exact patient-related costs incurred by performing and postponing surgeries and estimates the hospital-related costs based on total resource availability and the scheduled patients' total expected surgery durations and LOSs. While taking into account the random arrivals of new patients in each week, the MDP model provides an optimal policy that leads to the lowest expected total costs over an infinite horizon. Once the selection of patients is determined, these patients' assignments to surgical blocks (made up of a combination of ORs and dates) can be optimized using a stochastic programming model. For example, \shortciteA{zhang2019two} propose a two-level optimization model that combines MDP and stochastic programming to optimize all the decisions required by an advance scheduling problem. Their model addresses simple instances with a single specialty and two surgical blocks, while the MDP model proposed in this paper can be combined with a more complicated stochastic programming model in future research and can thereby be used to solve more realistic advance scheduling problems.

Compared to existing research into patient admission control \shortcite{patrick2008dynamic,min2010elective,min2014managing,astaraky2015simulation,truong2015optimal}, this work presents two main innovations{. First}, it proposes a comprehensive patient admission control model that simultaneously incorporates time-dependent dynamic patient {priority scores}, the needs of multiple specialties, and the capacity constraints of ORs and SICU{. Second}, in order to tackle the curses of dimensionality {for} realistically sized problems, this paper investigates the mathematical properties of the proposed model in depth and develops a novel approximate dynamic programming (ADP) algorithm based on recursive least-squares temporal difference (RLS–TD($\lambda$)) learning. The proposed algorithm is tested through extensive numerical experiments and its capability to solve realistically sized problems is validated.
}
 
The {remainder} of this paper is organized as follows: {Section \ref{SecLiterature} provides a comprehensive review of the existing literature on patient admission control and MDP}. Section \ref{SecModel} describes the studied {patient admission control} problem and presents the infinite-horizon MDP formulation. Section \ref{SecAna} addresses the structural analysis for the MDP model and proposes an algorithm to predigest the exploration of action space. The RLS–TD($\lambda$)-based ADP algorithm is introduced in Section \ref{SecSol}, and the results of numerical experiments are presented and discussed in Section \ref{SecExp}. Finally, the conclusions and several possible future extensions of this work are given by Section \ref{SecCon}. 

{
\section{Literature Review}\label{SecLiterature}
In this section, we review the existing literature relevant to patient admission control and the MDP.

\subsection{Patient admission control}\label{SubSecPAC}
Typically, in a patient admission control problem, the surgery planner manages a dynamic waiting list of elective patients and makes sequential decisions at the beginning (or end) of each planning period (day or week) to determine which patients are treated in the present (or the next) period. Since the patient-to-block assignments are not considered, patient admission control can be regarded as a subproblem of advance surgery scheduling. Among the existing research on advance surgery scheduling, mathematical programming (MP) models are commonly used to optimize the cost of one single planning period \shortcite<e.g., >{lamiri2008stochastic,min2010scheduling,jebali2015stochastic,neyshabouri2017two,moosavi2018scheduling}{. However, these MP models minimize the short-term cost myopically and may lead to high costs-to-go in the future. In contrast, patient admission control is usually formulated as an MDP model in order to minimize the expected total costs over a long planning horizon}. Though the MDP model does not address the issue of patient-to-block assignments, it can be combined with an MP model to optimize all the advance scheduling decisions. Such a combination may lead to a much better long-term performance in comparison to that achieved by a pure MP model \shortcite{zhang2019two}. This paper aims to formulate and efficiently solve a comprehensive MDP model for realistic patient admission control problems, while the optimization of patient-to-block assignments is left for further research. 

In the literature on {patient admission control and} surgery scheduling, elective patients and non-elective patients are two commonly addressed patient groups. The former group is usually added onto a waiting list before being treated{. These} patients' surgeries can be postponed for some period of time. In comparison, the latter group is made up of emergent patients (emergencies) that should be treated as soon as possible. Among the relevant research, two policies are commonly adopted to deal with the two different patient groups. The first one is a dedicated policy under which all emergent patients are channeled to dedicated surgical facilities, so that scheduled elective surgeries are not interrupted. When this policy is employed, the surgery planner focuses on the planning of elective surgeries using non-dedicated resources, while emergent patients {can usually be ignored} since they are treated in dedicated facilities on a first-come-first-served (FCFS) basis \shortcite<e.g., >{fei2009solving,denton2010optimal,jebali2015stochastic,addis2016operating,marques2017different,neyshabouri2017two,roshanaei2017propagating,roshanaei2020reformulation,zhang2020mitigating}. The second policy is more flexible{. It} allows emergent surgeries to be performed in any unoccupied OR. Scheduling surgeries under {this} flexible policy is more complex than doing so under the dedicated policy since emergencies introduce more uncertainty into the scheduling problem. In research into the employment of the flexible policy, the authors usually assume the emergency demand to be a stochastic parameter and optimize the surgical resources (e.g., OR capacity and recovery beds) reserved for emergencies in a way that balances the satisfaction of elective patients and the quick access to care of emergent ones \shortcite<e.g., >{wang2014column,truong2015optimal,rachuba2017fuzzy}. In addition to the two policies discussed above, some papers plan elective and non-elective surgeries using a hybrid policy that maintains both dedicated and versatile ORs \shortcite<e.g., >{tancrez2013assessing,hosseini2014evaluation,ferrand2014partially}. \shortciteA{ferrand2010comparing} present a study of a two-OR planning problem{. They} conclude that adopting the dedicated policy reduces the elective patients' waiting times and the overtime of ORs, but that it leads to longer waiting times for emergent patients. \shortciteA{duma2019management} provide a detailed comparison that reveals that the dedicated policy results in fewer elective surgery cancellations, higher resource utilization rates, and shorter waiting lists, and that the flexible policy brings about a better trade-off between the interests of elective and non-elective patients. Moreover, they establish that a further improvement can be achieved through a mixture of the two policies (i.e., hybrid policies). The authors also claim that the performances of different policies depend on the scenario and the operative conditions at hand{. Hence}, any policy could be the best one under a specific problem setting. In this paper, we {assume a} dedicated policy {meaning that} emergencies are excluded from our patient admission control problem.

Effectively managing the waiting list of elective patients is the key to patient admission control. To determine the relative priority of elective patients on the waiting list, it is important to adopt a proper prioritization system \shortcite{guerriero2011operational,van2015trade}. \shortciteA{patrick2008dynamic} classify elective patients into several urgency-related groups (URGs) with static {priority scores} and specify a priority-related waiting-time target for each group. However, \shortciteA{testi2007three} and \shortciteA{testi2008prioritizing} suggest that classifying patients into URGs is not enough to guarantee the efficiency and equity of schedules. They propose a time-dependent prioritization scoring system that uses the product of urgency {coefficient} and waiting time to capture a patient's priority. Further to this, \shortciteA{valente2009model} introduce a pre-admission model that has been put into practice in an Italian hospital. This model assesses the urgency level of each new patient and thereby determines a corresponding maximum time before treatment (MTBT). It then prioritizes the patient in real-time according to his/her urgency level and actual waiting time. Hospital data show that this pre-admission model allows homogeneous and standardized prioritization and enhances transparency, efficiency and equity. \shortciteA{min2014managing} conduct numerical simulations that compare the effects of static and time-dependent dynamic prioritization on patients' waiting time. The results indicate that the waiting time for non-urgent patients is much longer than that of urgent ones when the static priority {score} is applied, whereas the dynamic priority {score} minimizes the total weighted waiting time of all patients, so that the gap in the length of waiting time between urgent patients and non-urgent ones is significantly reduced. {In the relevant research on advance surgery scheduling and patient admission control, time-dependent priority scores are widely used as multipliers of the patient-related costs \shortcite<including surgery costs and waiting costs, e.g.,>{addis2016operating,agnetis2012long,agnetis2014decomposition,aringhieri2015two,jebali2015stochastic,jebali2017chance,lamiri2008column,lamiri2008stochastic,marques2017different,min2010scheduling,rachuba2017fuzzy,roshanaei2017propagating,roshanaei2020reformulation,tanfani2010pre,testi2009tactical,vancroonenburg2019chance,wang2016discrete,zhang2019markov,zhang2019two,zhang2020column}. With this setting, patients with the longer waiting times are given the higher priority scores, and the surgery planner is encouraged to schedule the surgeries as early as possible because the unscheduled patients will incur higher costs in the future. In addition to the time-dependent priority scores,} \shortciteA{neyshabouri2017two} introduce a {new multiplier} that addresses the relative importance of different specialties. This feature gives surgical cases from more important specialties (e.g., cardiology) higher priority than those from less important specialties (e.g., otolaryngology). Referring to these prioritization methods, we use the product of urgency {coefficient}, waiting time, and specialty-related importance factor to describe patient priority in this paper. {Moreover, we assume that every patient is given a maximum {recommended} waiting time (dependent on his/her specialty and urgency coefficient) and that any patient that is not scheduled in the currently considered planning period incurs a waiting cost (multiplied by his/her time-dependent priority score, as mentioned above). Thus, the surgery planner gives priority to the patients who join the waiting list earlier {with higher urgency coefficients}.}

{While ORs are considered to be the most important surgical resources and their utilization is optimized in almost all the relevant works, recently, more and more researchers take the supporting facilities of surgical activities into consideration and address multi-resource patient admission control and surgery scheduling problems \shortcite<e.g., >{min2010scheduling,gocgun2012lagrangian,huh2013multiresource,astaraky2015simulation,truong2015optimal,jebali2015stochastic,jebali2017chance,samudra2016scheduling,neyshabouri2017two,zhang2019two,zhang2020column}. Among the various supporting facilities, the downstream recovery units, such as SICU, have drawn much attention from the researchers and practitioners, because the unavailability of SICU beds may block the postoperative patients in ORs and affect the feasibility of the ongoing surgery schedule \shortcite{min2010scheduling,jebali2015stochastic}. Moreover, \shortciteA{jonnalagadda2005evaluation} point out that the unavailability of recovery room is the cause of 15\% of surgery cancellations in the hospital they have studied, and \shortciteA{utzolino2010unplanned} report that the readmission rate to SICU of the patients that are discharged due to the lack of recovery beds is almost 3 times that of the patients discharged electively. Given that SICU is an important surgical facility and usually becomes the bottleneck resource of an OT, in this work, we believe that considering the capacities of ORs and SICU together results in better resource utilization than only considering the OR capacity. By jointly planning ORs and SICU, the inconvenience and extra costs caused by exceeding the regular capacity of any of the two resources can be minimized, meanwhile the success rate of the intra-week surgery scheduling can be improved.

With different problem settings, patient admission control problems have been studied by many researchers.} \shortciteA{gerchak1996reservation} address a patient admission control problem in which decisions are made at the beginning of each day that determine the optimal number of patients to be treated that day. Similarly, \shortciteA{green2006managing} present a finite-horizon MDP formulation that deals with an intra-day diagnostic facility scheduling problem for multiple classes of patients. Further to this, \shortciteA{patrick2008dynamic} use an infinite-horizon MDP model to dynamically assign patients to diagnostic facilities. Their model differs from that of \shortciteA{gerchak1996reservation} in that it considers multiple priority classes and quantifies the actual waiting times of elective patients. They also introduce a booking horizon that consists of a number of future days, so that patients can be assigned to anyone of the days within the booking horizon at each decision epoch. A similar booking horizon is adopted by \shortciteA{liu2010dynamic} to schedule single-priority outpatient appointments. \shortciteA{min2010elective} extend the model developed by \shortciteA{gerchak1996reservation} to apply to a multi-priority case; they then evaluate the impact of patient priority and cost settings on the resulting surgery schedules. \shortciteA{min2014managing} go one step further and consider dynamic patient priority: each patient is initially assigned a fixed urgency {coefficient} when he/she joins the waiting list, then the product of the urgency {coefficient} and actual waiting time, which dynamically increases over time, determines his/her priority {score}. \shortciteA{astaraky2015simulation} and \shortciteA{truong2015optimal} extend the model developed by \shortciteA{patrick2008dynamic} to incorporate multiple surgical resources (ORs, recovery beds, etc.); however, the multi-priority setting is dropped in \shortciteA{truong2015optimal}.

In this paper, we extend the aforementioned extant research by taking into account time-dependent dynamic patient priority {scores}, the capacity constraints of ORs and the SICU, and the needs of multiple specialties and the different patient characteristics with which they are presented. To the best of our knowledge, this is the first time that these factors are collated into a comprehensive MDP model in order to handle realistically large-sized patient admission control problems.

\subsection{Markov decision process}
MDP is a discrete mathematical model used to facilitate sequential decision-making in the presence of uncertainty \shortcite{White2001}{. It} is the most commonly used mathematical model by existing research into patient admission control \shortcite<e.g., >{patrick2008dynamic,min2010elective,min2014managing,astaraky2015simulation,truong2015optimal}. In an MDP model, the status of the problem under consideration is described by the {state variables} from a set named state space{. The} possible actions that can be taken are included in another set {called the} action space. At every discrete time point (i.e., decision epoch), the agent observes the state of the problem and selects an action from the action space. The current state and the selected action determine the instant cost (or revenue) of the problem and the probability distribution of the subsequent state to which the problem transfers at the next decision epoch \shortcite{puterman1994markov}. The objective of an MDP model is to find the optimal policy that specifies a mapping from the state space to the action space and optimizes the value function (i.e., minimizes the expected total costs or maximizes the expected total revenues {over all} decision epochs) \shortcite{kolobov2012planning}.

Most algorithms used to solve {an MDP} are based on dynamic programming (DP), such as policy iteration (PI), value iteration (VI), real-time dynamic programming (RTDP) and its variants. VI and PI evaluate the entire state space iteratively, updating the value function until the improvement is lower than a given threshold. These algorithms employ brute-force search strategies and the computational resources (memory and time) they need are exponential to the problem size. As a result, solving realistically sized MDP models using VI or PI is usually intractable. In order to improve the computational efficiency, \shortciteA{barto1995learning} propose a RTDP algorithm that only evaluates a subset of the state space. At every decision epoch, RTDP explores the state space along several sampled trajectories that are rooted at the state that describes the problem's current status (i.e., the initial state). Specifically, at each visited state (including the initial state), RTDP updates the value function and policy then randomly samples the next state to visit according to the probability distribution determined by the current state and the corresponding action under the current policy. When a certain number of states are evaluated, RTDP returns to the initial state and repeats the above procedure. Since RTDP only visits those states that are reachable from the initial state, it outperforms PI and VI in terms of computational efficiency. However, the original version of RTDP lacks a mechanism for convergence detection and a proper criterion for terminating the computation; {hence}, it may waste computational resources on those states where the value function {has} already converged, or terminate so early that the value function is still far from {convergence}. To overcome these drawbacks, several extensions of RTDP have been developed in the literature. \shortciteA{bonet2003labeled} propose labelled-RTDP (LRTDP), which incorporates a labelling scheme into the original RTDP. They label the states where the value function no longer improves; in this way, the computational resources can be concentrated on the non-labelled states where the value function {has} not converged. \shortciteA{mcmahan2005bounded} and \shortciteA{Smith2006Focused} propose bounded-RTDP (BRTDP) and focused-RTDP (FRTDP), respectively. These extended RTDP algorithms compute both an upper bound and {a} lower bound on the optimal value function; the gap between the two bounds can then be used to judge whether the value function {has} converged and the exploration of the state space can be guided towards the poorly understood states (i.e., the states where the gap between the two bounds is large). Further to this, \shortciteA{Sanner2009VPI} improve BRTDP by introducing a value of perfect information (VPI) analysis to detect whether the policy at a state {has} converged. The VPI–RTDP algorithm they propose saves computational resources by not visiting the states where the value function {has not} converged but the policy {has} already converged.

Though the DP-based algorithms {have advanced} in different ways, their computational performance is highly dependent on the problem scale; i.e., the CPU time and memory consumed by the DP-based algorithms increase exponentially as the size of {the} MDP model increases. Specifically, \shortciteA{zhang2019markov} employ VI and the above-mentioned RTDP algorithms to solve patient admission control problems. Their experimental results show that VPI–RTDP outperforms VI significantly and that it is the best DP-based algorithm in terms of accuracy and efficiency. However, the problem size that VPI–RTDP can cope with is still fairly limited since the CPU time consumed by VPI–RTDP increases greatly as the size of {the} MDP model becomes slightly larger. Therefore, more advanced algorithms should be developed that can solve realistically sized patient admission control problems efficiently. 

In short, the challenges faced by DP-based algorithms are the so-called three curses of dimensionality: state space, action space, and outcome space (containing all the possible subsequent states of a state–action pair) \shortcite{powell2007approximate}. Regarding the large action spaces, many researchers addressing patient admission control perform structural analyses of their MDP models to reduce the number of actions that should be evaluated \shortcite <e.g.,>{min2010elective,min2014managing,truong2015optimal}. Moreover, approximate dynamic programming (ADP) provides a set of powerful algorithms that are able to cope with large state spaces and outcome spaces. Firstly and the most importantly, since the dimension of the value function depends on the cardinality of the state space, the computational complexity caused by a large state space can be reduced by estimating the high-dimensional value function with a low-dimensional approximator{. Secondly}, the enumeration of a large outcome space can be avoided by randomly sampling a subsequent state to evaluate. To the best of our knowledge, the application of ADP in the literature on patient admission control is very limited, although we note that \shortciteA{patrick2008dynamic} and \shortciteA{astaraky2015simulation} have developed a linear programming–based ADP algorithm and a simulation–based ADP algorithm, respectively.

In this paper, we formulate the patient admission control problem being studied as an MDP model with an infinite horizon. Recognizing that the problems involved in this paper are much larger than those addressed in the existing research \shortcite<e.g., >{patrick2008dynamic,min2010elective,min2014managing,astaraky2015simulation,truong2015optimal}, we reduce the computational complexity of solving the MDP model in two ways{. First}, we perform an in-depth analysis to investigate the optimal policy's properties, based on which we develop an efficient algorithm to simplify the exploration of the action space and to accelerate the solution approaches employed{. Second}, we develop an ADP algorithm based on RLS–TD($\lambda$), an efficient reinforcement learning method proposed by \shortciteA{xu2002efficient}. RLS–TD($\lambda$) {updates} a low-dimensional linear function to estimate the high-dimensional value function; {thus}, the RLS–TD($\lambda$)-based ADP algorithm is capable of providing near-optimal policies for large-scale patient admission control problems and consumes much less CPU time than do conventional DP-based algorithms. In the literature, a similar RLS–TD($\lambda$)-based ADP algorithm has been proposed by \shortciteA{yin2017recursive} to solve a real-time traffic signal control problem{. Their} experimental results show that the proposed ADP algorithm provides high-quality policies and is much more efficient than DP-based algorithms.}

\section{Problem description and formulation}\label{SecModel}

{This section describes the patient admission control problem in detail and presents the infinite-horizon MDP model}. We manage an OT that {provides surgical service for} elective patients from $J$ specialties. Every specialty $j$ has a relative importance factor denoted by $v_j$, and the patients in specialty $j$ are divided into $U_j$ urgency groups{, each associated with a maximum recommended waiting time $W_{ju}$.} {An MSS has been fixed at the tactical level and specifies the surgical blocks (i.e., OR-days) preallocated to each specialty, and the SICU is shared by all the $J$ specialties. Patients'} surgery durations and LOSs are subject to uncertainty, whereas the regular capacities of the ORs and SICU are fixed. Extra use of an OR leads to a cost of $c_o$ per hour, while a penalty of ${c_e}$ per bed/per day is incurred if the recovery beds in SICU are insufficient for the actual demand {(since some patients in need of intensive care have to be transferred to other recovery units with a lower level of care, or extra beds have to be added to SICU)}. {It should be mentioned that the actual LOSs of some patients can be 0, which means that not all the patients need intensive care after surgery.} 

{In addition to the hospital-related costs, we consider two types of patient-related costs in this paper: 1) surgery costs incurred by the booked surgeries which require surgical staff's work, utilization of surgical facilities, etc.; 2) waiting costs incurred by the {postponed} surgeries which cause patient dissatisfaction and may lead to deterioration in patients' health}. {We define} ${c_b}$ and ${c_d}$ {as} the unit surgery cost and the unit waiting cost, respectively. {In the literature, patient priority scores are commonly used as multipliers in the patient-related costs} {\shortcite<e.g., >{lamiri2008column,min2010elective,min2014managing,marques2017different,neyshabouri2017two,roshanaei2017propagating,roshanaei2020reformulation}. Hence, considering the dynamic patient priority {score} $Pr=v_juw$, we assume that every scheduled patient produces a surgery cost ${c_b}v_juw$, and that every unscheduled patient incurs a waiting cost ${c_d}v_juw$.} {The two types of time-dependent patient-related costs are both incorporated into the cost function, so that patients' waiting times can be minimized.} {It can be observed that scheduling a patient reduces the patient-related costs by $(c_d-c_b)v_juw$, which is proportionate to the relative importance factor $v_j$. Hence, the ORs preallocated to the specialties with higher $v_j$ are more likely to be overused, and when the patients of $J$ specialties compete for SICU recovery beds, priority is given to the specialties with higher $v_j$. We note that the priority score $P_r=v_juw$ may be simplistic from the perspective of practitioners, since it imposes a relationship among the three criteria and may not be sufficiently accurate. In fact, the prioritization systems adopted by hospitals tend to be more comprehensive and usually vary from one specialty to another \shortcite<e.g.,>{mullen2003prioritising,las2010can,montoya2014study,oliveira2020assessing}. As this paper focuses on mathematical optimization instead of prioritization strategies, we define $P_r$ in a simple format, which is well accepted by the research community (as discussed in Section \ref{SubSecPAC}), to keep our model concise.}

{From the existing research studying advance scheduling, we find that elective surgeries are usually planned over a fixed planning horizon (e.g., one week) \shortcite<e.g., >{fei2008solving,fei2009endoscopy,fei2009solving,fei2010planning,denton2010optimal,min2010scheduling,wang2014column,hashemi2016constraint,marques2017different,neyshabouri2017two,roshanaei2017collaborative,roshanaei2017propagating,moosavi2018scheduling,pang2018surgery}. In such studies, decisions to assign the patients on the waiting list to specific ORs are made at the beginning (or the end) of each horizon, with the dates of such assignations being within the present (or the next) horizon. When new elective patients arrive, the surgery schedule of the present horizon has already been determined{. As} a result, they are left unscheduled until the beginning of the next horizon. The patient admission control problem studied in this paper is a subproblem of advance scheduling; therefore, we assume that the newly arrived patients cannot be scheduled during the week of their arrival (i.e., direct admission is not allowed). The same assumption is also adopted by other research that addresses patient admission control, such as \shortciteA{min2010elective,min2014managing} and \shortciteA{zhang2019two}.}

At the end of each week, we update the patient waiting list by removing the treated patients and adding the newly arrived ones. We then determine which patients will be treated during {the week after the current week}. Let $n_{juw}$ be the total number of type-$\{juw\}$ patients (i.e., {specialty-$j$ patients with urgency coefficient $u$ and actual waiting time $w$}) on the waiting list; the state of the MDP {model} can then be defined as the {vector} of $n_{juw}$: $s=\{n_{juw}{:j=1,2,...,J;u=1,2,...,U_j;w=1,2,...,W_{ju}}\}$. Similarly, the action of the MDP {model} is defined as {vector} $a=\{m_{juw}{:j=1,2,...,J;u=1,2,...,U_j;w=1,2,...,W_{ju}}\}$, where $m_{juw}$ denotes the number of scheduled type-$\{juw\}$ patients. {In reality, it may be impossible to strictly comply with the maximum recommended waiting times, but \shortciteA{patrick2008dynamic} demonstrate that, in their patient admission control problem, a suboptimal policy can schedule the vast majority of patients earlier than their waiting-time targets. Hence, we assume in this paper that $W_{ju}$ are ``achievable'', i.e., they are determined by hospital managers according to the urgency of surgeries as well as the scarcity of surgical resources, such that it is not difficult to find good policies under which the proportion of the deferred patients ($w>W_{ju}$) is sufficiently small and can be ignored. This assumption may result in a departure from the real practice because the primary criterion for determining $W_{ju}$ is usually the urgency only \shortcite<e.g.,>{testi2008prioritizing,valente2009model}, but it can avoid incorporating into our MDP model the deferred patients with $w=W_{ju}+1,W_{ju}+2,...$, which lead to an infinite-dimensional state space. Under this assumption, all patients are supposed to be treated before their waiting times exceeding $W_{ju}$, and the deferred patients are thus excluded from any state $s$ and action $a$, as defined above.} Let $A(s)$ be the set of feasible actions for {state} $s$, then, for any $a\in A(s)$:
\begin{equation}\label{FeasibleAction}
\left\{
\begin{array}{ll}
m_{juw}\leqslant n_{juw} & \text{if } w<W_{ju}\\
m_{juw}=n_{juw} & \text{if } w=W_{ju}\\
\end{array}
\right.
\end{equation}

Let $\tilde{n}_{ju}^\tau$ be the number of newly arrived { specialty-$j$} patients with urgency {coefficient} $u$ during week $\tau$; the transition of state from week $\tau$ to week $\tau+1$ can then be written as
\begin{equation}
\left\{
\begin{array}{ll}
n^{\tau+1}_{ju,1}=\tilde{n}_{ju}^\tau & \forall\ j,\ u\\
n^{\tau+1}_{ju,w+1}=n^\tau_{juw}-m^\tau_{juw} & \forall\ j,\ u,\ w<W_{iu}\\
\end{array}
\right.
\end{equation}
Then{,} we can obtain the transition probability function of {the MDP model} as follows:
\begin{equation}\label{TransDef}
p(s_\tau,a_\tau,s_{\tau+1})=\prod_{j=1}^{J}\prod_{u=1}^{U_j}\left[p(\tilde{n}_{ju}^\tau=n^{\tau+1}_{ju,1})\prod_{w=1}^{W_{ju}}p(n^\tau_{juw}-m^\tau_{juw}=n^{\tau+1}_{ju{,w+1}})\right]
\end{equation}
{In (\ref{TransDef}), $p(\tilde{n}_{ju}^\tau=n^{\tau+1}_{ju,1})\in(0,1)$ is the probability that the number of newly arrived type-$\{ju,0\}$ patients in week $\tau$ is equal to the total number of type-$\{ju,1\}$ patients in state $s_{\tau+1}$. Its value is computed by the probability mass function of a Poisson distribution with an arrival rate $\bar{n}_{ju}$. $p(n^\tau_{juw}-m^\tau_{juw}=n^{\tau+1}_{ju{,w+1}})\in\{0,1\}$ is a deterministic term that indicates if the number of unscheduled type-$\{juw\}$ patients in week $\tau$ matches the total number of type-$\{ju,w+1\}$ patients in state $s_{\tau+1}$. Its value can either be 0 or 1.}

{In a patient admission control problem, we cannot compute the exact utilizations of surgical resources, which depend on the intra-week schedule (i.e., patient-to-OR assignments) and the realizations of stochastic parameters. It is straightforward to estimate the expected resource utilizations using the total capacities of ORs and SICU as well as the distributional information of stochastic parameters \shortcite<e.g.,>{astaraky2015simulation,gerchak1996reservation,gocgun2012lagrangian,huh2013multiresource,jebali2017chance,lamiri2008stochastic,min2014managing,truong2015optimal}. Even so, for the problem studied in this paper, computing the exact expectations of OR overtime and SICU bed shortage is a considerable challenge, because this requires computing the expectations of $|J|+1$ truncated probability distributions (lognormal distributions are commonly adopted) for each state–action pair. In order to reduce the computational complexity, we approximate the expected OR overtime and SICU bed shortage by taking the sum of the expected surgery durations and LOSs of the scheduled patients, subtracting the available OR and SICU capacity, and taking the positive parts. {This estimation method is adopted in many relevant studies} \shortcite<e.g,>{astaraky2015simulation,molina2015integrated,molina2015new,wang2016discrete,roshanaei2017collaborative,roshanaei2017propagating,roshanaei2020reformulation} {and its effectiveness in estimating the over-utilization of ORs has been validated:} \shortciteA{molina2015integrated} show that the stochasticity of surgery durations does not influence the average OR utilization; \shortciteA{wang2016discrete} reveal that the under- and over-estimations of surgery durations are mostly counterbalanced, and that stochasticity has little influence on the quality of their deterministically optimized surgery schedules; \shortciteA{roshanaei2017collaborative,roshanaei2020reformulation} anticipate that ignoring uncertainty does not drastically diminish the cost-savings achieved by their deterministic optimization approach. {However, we admit that estimating the SICU utilization in the same way may be inaccurate, mainly because we assume that the demand for intensive care is smoothed out, but in reality, the postoperative patients are likely to be clumped due to the restrictions imposed by the MSS, rather than being evenly distributed throughout the week. In addition, we need to assume that all patients currently in the SICU will finish their stays before the beginning of the next week. This assumption is relatively reasonable because most patients spend no more than two days in the SICU (see the test instances of \shortciteA{min2010scheduling} and \shortciteA{neyshabouri2017two}).} In the numerical experiments of this work, we compute the overtime of ORs and the excess of SICU capacity using large numbers of randomly sampled surgery durations and LOSs, thus the performance and robustness of our policies under uncertainty can be effectively evaluated.} 

{Then, the cost function of the MDP model for state–action pair $(s,a)$ is defined as}
\begin{equation}\label{CostDef}
\begin{split}
C(s,a)=&{c_b}\sum_{j=1}^{J}\sum_{u=1}^{U_j}\sum_{w=1}^{W_{ju}}v_juwm_{juw}+{c_d}\sum_{j=1}^{J}\sum_{u=1}^{U_j}\sum_{w=1}^{W_{ju}}v_juw(n_{juw}-m_{juw})\\
&+c_o\sum_{j=1}^{J}\left(\sum_{u=1}^{U_j}\sum_{w=1}^{W_{ju}}m_{juw}\bar{d}_{j}-\rho_1B_j\right)^++{c_e}\left(\sum_{j=1}^{J}\sum_{u=1}^{U_j}\sum_{w=1}^{W_{ju}}m_{juw}\bar{l}_{j}-\rho_2R\right)^+\\
\end{split}
\end{equation}
where $(\pmb\cdot)^+=\max\{\pmb\cdot,0\}$; $\bar{d}_{j}$ and $\bar{l}_{j}$ are the expectations of surgery duration and LOS for {specialty-$j$} patients {(we assume that patients from the same specialty have the same lognormal distributions for surgery duration and LOS)}; $B_j$ is the regular OR capacity reserved for specialty $j$; $R$ is the regular capacity of SICU; $\rho_1,\rho_2\in(0,1]$ are the estimated availability rates of the ORs and recovery beds, respectively. $\rho_1$ and $\rho_2$ are incorporated into the cost function, since the intra-week scheduling cannot always fully utilize the surgical resources. 

{It can be observed that cost function (\ref{CostDef}) is independent of week $\tau$. Moreover, since the number and type of elective surgeries do not change too much over weeks \shortcite{guerriero2011operational}, we assume that the arrival rates $\bar{n}_{ju}$ of new patients are constant. Thus, the transition probability function (\ref{TransDef}) is also independent of $\tau$ and the MDP model is stationary \shortcite{kolobov2012planning}. In order to capture more dynamics of the problem, one might want to shorten the time interval between two consecutive decision epochs from one week to one day. However, making decisions on a daily basis requires us to consider the two following issues. First, elective surgeries cannot be scheduled at weekends{, implying that} making the same decision on different weekdays leads to different expected waiting times of patients and different costs-to-go. Second, the regular OR capacity preallocated to each specialty by an MSS usually varies from Monday to Friday. {Consequently}, the resource constraints and cost function are no longer independent of $\tau$, and the state space of the non-stationary MDP model will be five times larger than that of the stationary one.} {Given that the planning horizons of most advance surgery scheduling problems are one week \shortcite<e.g.,>{min2010scheduling,jebali2015stochastic,jebali2017chance,marques2017different,neyshabouri2017two,moosavi2018scheduling,molina2018stochastic,zhu2019operating,roshanaei2020reformulation}, our MDP model for patient admission control (a subproblem of advance surgery scheduling) is formulated on a weekly basis and is compatible with longer planning periods that are adopted by some hospitals: the decision epochs $\tau$ can be changed from weekly to bi-weekly or monthly without the need for changing the structure of the MDP model.}

{Figure \ref{FigEg} demonstrates the evolution of the MDP model for an example patient admission control problem with two specialties ($j=1,2$) {and} two urgency {groups} ($u=1,2$) in each specialty.} 

\begin{figure}[!htb]
	\centering
	\includegraphics[width=6in]{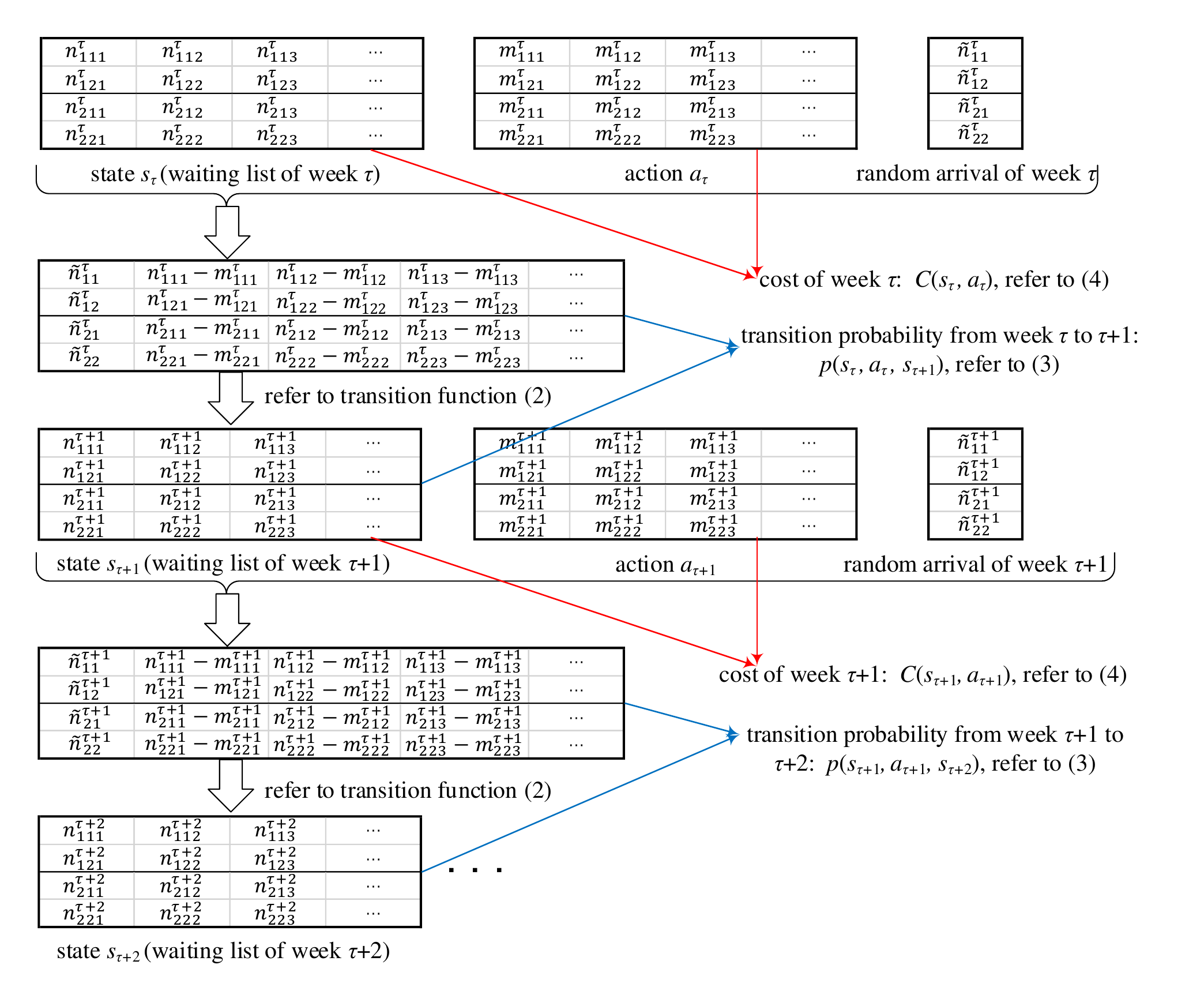}
	\vspace{-20pt}
	\caption{ An example MDP model for a two-specialty, two-urgency-{group} problem}\label{FigEg}
\end{figure}
 
Finally, the objective of the MDP model is to find the optimal policy $\pi^*$ that minimizes the discounted expected costs over the infinite horizon:
\begin{equation}
\pi^*=\mathop{\arg\min}_{\pi}\mathbb{E}\left\{\sum_{\tau=1}^{+\infty}\gamma^{\tau-1}C[s_\tau,\pi(s_\tau)]\right\}
\end{equation}
where $\gamma\in[0,1)$ is the discount factor and $\pi(s_\tau)$ is the action corresponding to $s_\tau$ under policy $\pi$.

\section{Structural analysis}\label{SecAna}
{The MDP model presented in Section \ref{SecModel} can be regarded as a substantial extension of the MDP models formulated by \shortciteA{gerchak1996reservation} and \shortciteA{min2010elective,min2014managing}. In these previous works, the authors explore the properties of their MDP models to facilitate the solution procedures. In this paper, as we take time-dependent patient priority scores, multiple resource constraints (ORs and SICU), and multiple specialties with different patient characteristics into consideration, our MDP model is significantly more complex and most of the model properties proved by \shortciteA{gerchak1996reservation} and \shortciteA{min2010elective,min2014managing} do not apply to our MDP model. Hence, we investigate the structural properties of our MDP model in this section, and these properties will help us to improve computational efficiency when evaluating the action space.} 

We first create a vector $\Delta_{j'u'w'}=\{\delta_{juw}\}\in S\cup A$ ($S$ denotes the state space and $A$ denotes the action space) in which $\delta_{j'u'w'}=1$ and the other elements are 0. Then, two partial orders on $S\cup A$ are defined as follows:
\begin{definition}\label{poset}
	Let $\textsc{x}=\{x_{juw}\}\in S\cup A$ and $\textsc{y}=\{y_{juw}\}\in S\cup A$.
	\begin{itemize}
		\item[\textup{(i)}] If $\forall j,u,w$: $x_{juw}\leqslant y_{juw}$ and $\exists j',u',w'$ s.t. $x_{j'u'w'}<y_{j'u'w'}$, then $\textsc{x}<\textsc{y}$.
		\item[\textup{(ii)}] If $\forall j,u,w$: $x_{juw}=y_{juw}$, then $\textsc{x}=\textsc{y}$.
		\item[\textup{(iii)}] Otherwise, $\textsc{x}$ and $\textsc{y}$ are incomparable.
	\end{itemize}
\end{definition}
\begin{definition}\label{pp}
	Let $\textsc{x}=\{x_{juw}\}\in S\cup A$ and $\textsc{y}=\{y_{juw}\}\in S\cup A$. The patient priority {scores} of $\textsc{x}$ and $\textsc{y}$ are denoted by $P(\textsc{x})$ and $P(\textsc{y})$, respectively.
	\begin{itemize}
	\item[\textup{(i)}] If $\textsc{x}=\textsc{y}+\Delta_{j'u'w'}-\Delta_{j'u''w''}$ and $u'w'<u''w''$, then $P(\textsc{x})<P(\textsc{y})$.
	\item[\textup{(ii)}] If $\textsc{x}=\textsc{y}$, then $P(\textsc{x})=P(\textsc{y})$.
	\item[\textup{(iii)}] Otherwise, $P(\textsc{x})$ and $P(\textsc{y})$ are incomparable.
	\end{itemize}
\end{definition}
Next, we propose Lemma \ref{Lemmamin} that will be used in the proofs of structural properties.
\begin{lemma}\label{Lemmamin}
	For functions $f,g:D\rightarrow\mathbb{R}$ with $D\subseteq\mathbb{R}$\textup{:} $\min f(x)-\min g(x)\geqslant\min[f(x)-g(x)]$.
\end{lemma}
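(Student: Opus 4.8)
The plan is to reduce the claimed inequality to a pointwise bound and then take a minimum. The only idea needed is the trivial additive decomposition $f = (f-g) + g$. Concretely, I would first observe that for every $x \in D$,
\[
f(x) = \bigl(f(x) - g(x)\bigr) + g(x) \;\geq\; \min_{y \in D}\bigl(f(y) - g(y)\bigr) + g(x),
\]
which uses nothing more than the fact that the value $f(x)-g(x)$ is at least the minimum of $f-g$ over $D$. Note that the right-hand side is the sum of a constant (the quantity $\min[f(x)-g(x)]$, which does not depend on $x$) and the term $g(x)$.

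Next I would take the minimum over $x \in D$ of both sides. On the left this produces $\min f(x)$; on the right, since adding a constant commutes with taking a minimum, it produces $\min[f(x)-g(x)] + \min g(x)$. Hence $\min f(x) \geq \min[f(x)-g(x)] + \min g(x)$, and rearranging gives precisely $\min f(x) - \min g(x) \geq \min[f(x)-g(x)]$.

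I do not expect a genuine obstacle here; the statement is a standard ``reverse triangle inequality'' for the $\min$ operator, and the whole argument is the two lines above. The only point deserving a word of care is whether the minima are actually attained: in the uses made of this lemma in the paper the relevant domains (the finite action sets $A(s)$ and the like) are finite, so attainment is automatic. If one wished to state the result in full generality, the identical two steps go through with $\inf$ replacing $\min$ — the pointwise bound $f(x) \geq \inf_D(f-g) + g(x)$ still holds for all $x$, and applying $\inf_x$ preserves it — provided the quantities involved are finite so that the subtraction is well defined.
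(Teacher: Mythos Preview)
Your argument is correct. The paper's proof is a close cousin but organized differently: it fixes $x_1=\arg\min f$ and $x_2=\arg\min g$, then writes
\[
\min f - \min g = f(x_1) - g(x_2) \geqslant f(x_1) - g(x_1) \geqslant \min\,[f-g],
\]
i.e.\ it evaluates directly at the minimizer of $f$ rather than establishing a pointwise bound and then minimizing. The two are essentially equivalent one-line arguments; your decomposition $f=(f-g)+g$ has the small advantage you note, namely that it goes through verbatim with $\inf$ in place of $\min$ without appealing to attainment, whereas the paper's version as written needs the minimizers $x_1,x_2$ to exist. For the finite action sets used later this distinction is immaterial.
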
 
\begin{proof}
	See Appendix \ref{ProofLemmamin}.
\end{proof}
{Let $C_0(s)=\min_{a\in A(s)}C(s,a)$ be the optimal single-period cost and $a_0(s)=\mathop{\arg\min}_{a\in A(s)}C(s,a)$ be the optimal single-period action}. For the sake of simplicity, we use $C_p(s,a)$ to denote the patient-related cost, i.e., the sum of the first term and the second term of (\ref{CostDef}), and use $C_h(a)$ to denote the hospital-related cost, i.e., the sum of the third term and the fourth term of (\ref{CostDef}). Using Definitions \ref{poset}, \ref{pp}, and Lemma \ref{Lemmamin}, the following statements can be proved to be true:
\begin{proposition}\label{ConvexC}
	\begin{itemize}
		\item[\textup{(i)}] $C_0(s)$ is increasing in $s$.
		\item[\textup{(ii)}] If $P(a)$ and $P(a')$ are comparable, then $C_h(a)=C_h(a')$ holds.
		\item[\textup{(iii)}] $C_0(s)$ is increasing in $P(s)$.
	\end{itemize}
\end{proposition}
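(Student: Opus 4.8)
The backbone of the argument will be the algebraic identity
\begin{equation*}
C(s,a)=c_d\,\overline{P}(s)-(c_d-c_b)\,\overline{P}(a)+C_h(a),\qquad \overline{P}(x):=\sum_{j,u,w}v_juw\,x_{juw},
\end{equation*}
in which $C_h(a)$ depends on $a$ only through the per-specialty scheduled totals $M_j(a):=\sum_{u,w}m_{juw}$ and is non-decreasing in each $M_j$ (because $\bar d_j,\bar l_j\ge 0$ and $c_o,c_e\ge 0$). Note $\overline{P}$ refines the partial order of Definition \ref{pp}: $P(x)\le P(y)$ implies $\overline{P}(x)\le\overline{P}(y)$, and likewise $x\le y$ (Definition \ref{poset}) implies $\overline{P}(x)\le\overline{P}(y)$. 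I would first dispose of \textbf{(ii)} directly from Definition \ref{pp}: comparability of $P(a)$ and $P(a')$ means $a=a'$ or that $a,a'$ differ by shifting one unit between two slots of the \emph{same} specialty, so $M_j(a)=M_j(a')$ for every $j$ and hence $C_h(a)=C_h(a')$. For (i) and (iii) the strategy is identical: given the optimal action of the ``larger'' state, exhibit a feasible action for the ``smaller'' state of no larger cost, so that the minimum defining $C_0$ can only decrease; Lemma \ref{Lemmamin} is on hand wherever a difference of minima must be compared, and $c_b\ge 0$, $c_d\ge c_b$ are used throughout.

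\textbf{Part (i).} For $s\le s'$ and any $a'=\{m'_{juw}\}\in A(s')$, I would set $m_{juw}:=\min\{m'_{juw},n_{juw}\}$. This truncation keeps $a=\{m_{juw}\}$ in $A(s)$: $m_{juw}\le n_{juw}$ always, and when $w=W_{ju}$ the forced equality $m'_{juw}=n'_{juw}\ge n_{juw}$ gives $m_{juw}=n_{juw}$, as (\ref{FeasibleAction}) demands. Then $m'_{juw}-m_{juw}=(m'_{juw}-n_{juw})^+\le(n'_{juw}-n_{juw})^+=n'_{juw}-n_{juw}$, so $\overline{P}(a')-\overline{P}(a)\le\overline{P}(s')-\overline{P}(s)=:\Delta\ge 0$; also $M_j(a)\le M_j(a')$, whence $C_h(a)\le C_h(a')$. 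Substituting into the identity, $C(s,a)-C(s',a')=c_d(\overline{P}(s)-\overline{P}(s'))+(c_d-c_b)(\overline{P}(a')-\overline{P}(a))+(C_h(a)-C_h(a'))\le-c_d\Delta+(c_d-c_b)\Delta=-c_b\Delta\le 0$. Minimizing over $a'\in A(s')$ yields $C_0(s)\le C_0(s')$.

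\textbf{Part (iii).} Let $s'=s+\Delta_{j'u'w'}-\Delta_{j'u''w''}$ with $u'w'<u''w''$ (so $P(s')<P(s)$) and write $\Delta_0:=\overline{P}(s)-\overline{P}(s')=v_{j'}(u''w''-u'w')>0$. Take $a=a_0(s)$. If $m_{j'u''w''}\ge 1$, I would relocate one scheduled unit, $a':=a+\Delta_{j'u'w'}-\Delta_{j'u''w''}$; a slot-by-slot check of (\ref{FeasibleAction}) gives $a'\in A(s')$ — the forced-slot cases $w'=W_{j'u'}$ and $w''=W_{j'u''}$ both close because $a$ already saturates those slots — with $M_j(a')=M_j(a)$ for all $j$ and $\overline{P}(a')=\overline{P}(a)-\Delta_0$, so $C(s',a')-C(s,a)=-c_d\Delta_0+(c_d-c_b)\Delta_0=-c_b\Delta_0\le 0$. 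If $m_{j'u''w''}=0$ — which forces $w''<W_{j'u''}$, since otherwise (\ref{FeasibleAction}) and validity of $s'$ would give $m_{j'u''w''}=n_{j'u''w''}\ge 1$ — and if in addition $w'<W_{j'u'}$, then $a$ itself is feasible for $s'$ and $C(s',a)-C(s,a)=-c_d\Delta_0\le 0$. In both situations $C_0(s')\le C_0(s)$, and the full partial order then follows by transitivity.

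\textbf{Main obstacle.} The residual case — $m_{j'u''w''}=0$ together with $w'=W_{j'u'}$ — is where I expect the real difficulty. Feasibility for $s'$ then forces $m'_{j'u'w'}=n_{j'u'w'}+1$, i.e.\ one extra scheduled specialty-$j'$ patient, and since the source slot contributes no scheduled unit this cannot be offset by a pure relocation, so $M_{j'}$ and possibly $C_h$ must grow. My plan here would be either (a) to invoke the stated ``achievability'' of the $W_{ju}$ so that the equality constraints in (\ref{FeasibleAction}) are effectively non-binding on the states under consideration, or (b) a finer dissection: if $a_0(s)$ schedules a specialty-$j'$ patient at some non-forced slot with priority product at most $u''w''$, unschedule one such unit and schedule the forced extra one, keeping $M_{j'}$ fixed and moving $\overline{P}$ by at most $\Delta_0$; otherwise, control the marginal hospital cost of the extra unit through optimality of $a_0(s)$, using $C_h(a_0(s)+\Delta_{j'u''w''})-C_h(a_0(s))\ge(c_d-c_b)v_{j'}u''w''$ together with convexity of $C_h$ in $M_{j'}$. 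Making option (b) watertight — in particular handling states in which $a_0(s)$ schedules only the forced specialty-$j'$ patients, where the extra unit genuinely incurs fresh overtime — is the crux of the proposition and the step I would spend the most care on.
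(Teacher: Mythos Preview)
Your arguments for (i) and (ii) are correct. For (i) you take a more direct route than the paper: your truncation $m_{juw}:=\min\{m'_{juw},n_{juw}\}$ handles arbitrary $s\le s'$ in one stroke, whereas the paper treats single-unit increments $s'=s+\Delta_{j'u'w'}$ and splits on whether $a_0(s+\Delta_{j'u'w'})\in A(s)$. For (iii), the paper's case split also differs from yours: in your notation (with $s$ the high-priority state) it distinguishes $m_{j'u''w''}=n_{j'u''w''}$ versus $m_{j'u''w''}<n_{j'u''w''}$, and in the second case argues by contradiction that $a_0(s')$ lies in the common feasible set $A(\sigma)$ with $\sigma=s'-\Delta_{j'u'w'}=s-\Delta_{j'u''w''}$, after which Lemma~\ref{Lemmamin} is applied over $A(\sigma)$.

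The obstacle you flag --- $m_{j'u''w''}=0$ together with $w'=W_{j'u'}$ --- is genuine, and the paper's own proof does not close it either. Its contradiction step tacitly requires $a_0(s')\in A(s)$ (so that $C[s,a_0(s)]>C[s,a_0(s')]$ actually contradicts optimality of $a_0(s)$), but when $w'=W_{j'u'}$ the forced-equality constraint at slot $(j',u',W_{j'u'})$ differs between $A(\sigma)$ and $A(s)$, so $a_0(s')\notin A(s)$ and the argument does not close. Your option (b) cannot be made watertight in full generality: one can build single-specialty instances with $c_b$ small, $c_o$ large, and $B$ small in which moving one unit from a non-deadline slot $(u'',w'')$ with large product $u''w''$ into the forced deadline slot $(u',W_{j'u'})$ with smaller $u'W_{j'u'}$ strictly lowers $P$ yet strictly raises $C_0$, because the extra forced schedule pays full overtime while saving only a small waiting cost. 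A sufficient side condition that eliminates the bad case is $(c_d-c_b)v_{j'}u'W_{j'u'}>c_o\bar d_{j'}+c_e\bar l_{j'}$: then any type with $u''w''>u'W_{j'u'}$ is already fully scheduled by $a_0(s)$, so $m_{j'u''w''}=n_{j'u''w''}\ge1$ and you are back in your first sub-case. The paper does not state such a hypothesis, so you have in fact located a gap its proof leaves open.
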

\begin{proof}
	See Appendix \ref{ProofConvexC}.
\end{proof}

{ From (i) and (iii) of Proposition \ref{ConvexC}, we know that the optimal single-period cost $C_0(s)$ is monotonically increasing in $n_{juw}$ and the {priority scores} of the patients on the waiting list. In addition, (ii) of Proposition \ref{ConvexC} indicates that the expected hospital-related costs depend on the number of {scheduled} patients in each specialty and are independent of patients' urgency {coefficients} and waiting times. With Proposition \ref{ConvexC}, we can further analyze the optimal value function (i.e., the value function under the optimal policy $\pi^*$) which is given by}
\begin{equation}\label{DefV}
	V^{\pi^*}(s)=\mathbb{E}\left\{\sum_{n=1}^{+\infty}\gamma^{n-1}C[s_n,\pi^*(s_n)]\right\}=\sum_{n=1}^{+\infty}\gamma^{n-1}\mathbb{E}\{C[s_n,\pi^*(s_n)]\}
\end{equation}
where $s_1=s$. Since $C[s_n,\pi^*(s_n)]$ is finite and $\gamma<1$, then $\gamma^{n-1}\mathbb{E}\{C[s_n,\pi^*(s_n)]\}\rightarrow0$ as $n$ goes to infinity. We can thereby assume $\gamma^{n-1}\mathbb{E}\{C[s_n,\pi^*(s_n)]\}=0$ for any $n>N\gg0$, then
\begin{equation}\label{EquaV}
	V_n^{\pi^*}(s)=\left\{
		\begin{array}{ll}
			C[s,\pi^*(s)]+\gamma\sum\limits_{s'\in S}p[s,\pi^*(s),s']V_{n+1}^{\pi^*}(s')&\text{if } n=1,2,...,N\\
			0&\text{if } n=N+1,N+2,...\\
		\end{array}
	\right.
\end{equation}
where $V_1^{\pi^*}(s)=V^{\pi^*}(s)$. Further, the Q-value of state-action pair $(s,a)$ {under $\pi^*$} is defined as 
\begin{equation}
	Q^{\pi^*}(s,a)=C(s,a)+\gamma\sum_{s'\in S}p(s,a,s')V^{\pi^*}(s')
\end{equation}
$Q^{\pi^*}(s,a)$ can also be written as the following recursive formula:
\begin{equation}\label{EquaQ}
Q^{\pi^*}_n(s,a)=C(s,a)+\gamma\sum\limits_{s'\in S}p(s,a,s')V_{n+1}^{\pi^*}(s')
\end{equation}
where $Q^{\pi^*}_1(s,a)=Q^{\pi^*}(s,a)$ and $V_{n+1}^{\pi^*}(s')=\min_{a\in A(s')}Q^{\pi^*}_{n+1}(s',a)$. 
 
Before exploring the properties of $V^{\pi^*}(s)$, we present the following notations and {analysis}, which will be useful when proving our propositions. Let {vector} $G^s_a=\{g_{juw}\}$ be the post-action state of $(s,a)$:
\begin{equation}
\left\{
\begin{array}{ll}
g_{ju,1}=0,&\forall {\ j,}\ \ u=1,2,...,U_j\\
g_{ju,w+1}=n_{juw}-m_{juw},&\forall {\ j,}\ \ u=1,2,...,U_j,\ \ w=1,2,...,W_{ju}-1\\
\end{array}
\right.
\end{equation}
and $\Psi=\{\psi_{juw}\}$ be the {vector} of newly arrived patients:
\begin{equation}
\left\{
\begin{array}{ll}
\psi_{ju,1}=\tilde{n}_{ju},&\forall {\ j,}\ \ u=1,2,...,U_j\\
\psi_{ju,w+1}=0,&\forall {\ j,}\ \ u=1,2,...,U_j,\quad w=1,2,...,W_{ju}-1\\
\end{array}
\right.
\end{equation}
Then, the subsequent state can be written as $s'=G^s_a+\Psi$. To keep the notations simple, we introduce $P_{\Psi}$ in the following equation: 
\begin{equation}
p(s,a,s')=\left\{
\begin{array}{ll}
\prod\limits_{u=1}^{U_j}p(\tilde{n}_{ju}=\psi_{ju,1})=P_{\Psi}, &\text{if }s'=G^{s}_a+\Psi\\
0,&\text{if }s'\neq G^{s}_a+\Psi\\
\end{array}
\right.
\end{equation}
Then, we have
\begin{equation}
\sum_{s'\in S}p(s,a,s')V^{\pi^*}(s')=\sum_{\Psi=\pmb{0}}^{+\infty}P_{\Psi}V^{\pi^*}(G^s_a+\Psi)
\end{equation}\par 
With these notations and analysis, we propose the following properties of $V^{\pi^*}(s)$:
\begin{proposition}\label{V}
	\begin{itemize}
		\item [\textup{(i)}] $V^{\pi^*}(s)$ is increasing in $s$.
		\item [\textup{(ii)}] $V^{\pi^*}(s)$ is increasing in $P(s)$.
	\end{itemize}
\end{proposition}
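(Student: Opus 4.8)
The plan is to establish both parts at once by \emph{backward induction} on the horizon index $n$ in the truncated recursion (\ref{EquaV})--(\ref{EquaQ}); these give $V_n^{\pi^*}(s)=\min_{a\in A(s)}Q_n^{\pi^*}(s,a)$ with $Q_n^{\pi^*}(s,a)=C(s,a)+\gamma\sum_{\Psi=\pmb 0}^{+\infty}P_\Psi V_{n+1}^{\pi^*}(G_a^s+\Psi)$. The base case $n=N+1$ is immediate since $V_{N+1}^{\pi^*}\equiv 0$. For the inductive step I assume $V_{n+1}^{\pi^*}$ is increasing in $s$ \emph{and} in $P(s)$ and prove the same for $V_n^{\pi^*}$; the two properties must be carried together, because the argument for ``increasing in $P$'' at level $n$ will invoke ``increasing in $s$'' at level $n+1$. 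Two facts do the heavy lifting: first, the decomposition $C(s,a)=C_p(s,a)+C_h(a)$ together with Proposition \ref{ConvexC}(ii), which says $C_h(a)$ depends on $a$ only through the per-specialty scheduled totals $\sum_{u,w}m_{juw}$ and is nondecreasing in each of them; second, that $G_a^s+\Psi$ moves monotonically (in both orders of Definitions \ref{poset} and \ref{pp}) with $G_a^s$, so it suffices to compare $V_{n+1}^{\pi^*}$ at the post-action states, and I will do so by exhibiting, for an optimal action at the ``larger'' state, a feasible action at the ``smaller'' state with no larger $Q_n^{\pi^*}$.

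For part (i), by transitivity it is enough to take $s^2=s^1+\Delta_{j'u'w'}$; let $a^2$ be optimal for $s^2$. If $a^2$ leaves at least one type-$\{j'u'w'\}$ patient unscheduled ($m^2_{j'u'w'}\le n^1_{j'u'w'}$), set $a^1=a^2$: it is feasible at $s^1$ by (\ref{FeasibleAction}), lowers the waiting term of $C_p$ by $c_dv_{j'}u'w'$, leaves $C_h$ unchanged, and yields $G_{a^1}^{s^1}\le G_{a^2}^{s^2}$, so the continuation value does not increase by the induction hypothesis. Otherwise $m^2_{j'u'w'}=n^2_{j'u'w'}$ (forced when $w'=W_{j'u'}$), and I take $a^1$ equal to $a^2$ but with one fewer scheduled type-$\{j'u'w'\}$ patient: the surgery term of $C_p$ drops by $c_bv_{j'}u'w'$, $C_h$ can only drop, and $G_{a^1}^{s^1}=G_{a^2}^{s^2}$. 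In both cases $Q_n^{\pi^*}(s^1,a^1)\le Q_n^{\pi^*}(s^2,a^2)=V_n^{\pi^*}(s^2)$.

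For part (ii) it suffices to take $s^1=s^2+\Delta_{j'u'w'}-\Delta_{j'u''w''}$ with $u'w'<u''w''$ (same specialty $j'$); fix an optimal $a^2$ for $s^2$ and build a feasible $a^1\in A(s^1)$. If $a^2$ schedules at least one type-$\{j'u''w''\}$ patient, I \emph{re-route one scheduled unit inside specialty $j'$}: let $a^1$ coincide with $a^2$ except for one more scheduled type-$\{j'u'w'\}$ patient and one fewer scheduled type-$\{j'u''w''\}$ patient. Feasibility follows from (\ref{FeasibleAction}); the specialty-$j'$ total is unchanged, so $C_h(a^1)=C_h(a^2)$ by Proposition \ref{ConvexC}(ii); the two coordinate changes cancel in the post-action state, so $G_{a^1}^{s^1}=G_{a^2}^{s^2}$; and the surgery term of $C_p$ drops by $c_bv_{j'}(u''w''-u'w')>0$. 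The remaining case is when $a^2$ schedules \emph{none} of the type-$\{j'u''w''\}$ patients (which forces $w''<W_{j'u''}$): re-routing is then impossible, and one must either keep $a^1=a^2$ (legal when $w'<W_{j'u'}$) or, when $w'=W_{j'u'}$, schedule the extra type-$\{j'u'w'\}$ patient.

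I expect this last case to be the main obstacle. Taking $a^1=a^2$ produces $G_{a^1}^{s^1}=G_{a^2}^{s^2}+\Delta_{j',u',w'+1}-\Delta_{j',u'',w''+1}$, and applying the hypothesis ``increasing in $P$'' to the continuation value would require $u'(w'+1)\le u''(w''+1)$; this can fail — precisely when $u'>u''$ — so one week's ageing can \emph{reverse} the priority ordering of the two coupled successor states and the naive coupling breaks. The remedy is to schedule the extra type-$\{j'u'w'\}$ patient instead (this is also the forced move when $w'=W_{j'u'}$): the post-action state then decreases by $\Delta_{j',u'',w''+1}$ (so the continuation value does not increase, by part (i) of the hypothesis), the waiting term drops by $c_dv_{j'}u''w''$, the surgery term rises by $c_bv_{j'}u'w'$, and $C_h$ rises by the marginal hospital cost $\delta$ of one extra specialty-$j'$ case. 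By Proposition \ref{ConvexC}(ii) this $\delta$ is independent of urgency group, hence equals what $a^2$ would have paid to schedule one of the very type-$\{j'u''w''\}$ patients it omitted; pitting the optimality of $a^2$ against that perturbation, together with $u'w'<u''w''$, $c_d\geqslant c_b$, and the fact that a more urgent group carries the tighter waiting-time cap (so the accelerated patient leaves the queue sooner), ought to close $Q_n^{\pi^*}(s^1,a^1)\le Q_n^{\pi^*}(s^2,a^2)$. Pinning down this balancing estimate cleanly — so that the extra surgery-plus-hospital cost incurred in the $s^1$-world is always dominated by the waiting cost it saves relative to the $s^2$-world — is the step I anticipate will require the most care; the rest is the bookkeeping of the exchange arguments above, possibly streamlined with Lemma \ref{Lemmamin}.
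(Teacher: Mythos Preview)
Your approach --- backward induction on the truncated horizon, with a case split on whether the optimal action at the ``larger'' state is feasible at the smaller one (or lies in $A(\sigma)$) --- is exactly what the paper does. The paper runs two separate inductions for (i) and (ii) rather than your joint one, but that is cosmetic; the exchange arguments you sketch for (i) and for the ``re-route one scheduled unit'' sub-case of (ii) coincide with the paper's.

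Where you diverge is in worrying about the sub-case of (ii) in which $a^2$ schedules none of the type-$\{j'u''w''\}$ patients and one week's ageing may flip the priority comparison. You are right to worry, and in fact the paper does \emph{not} address this: in its ``$\pi^*(s')\in A(\sigma)$'' branch it simply asserts $P(G^s_{\pi^*(s')}+\Psi)<P(G^{s'}_{\pi^*(s')}+\Psi)$ and, in the final Lemma~\ref{Lemmamin} step, $V_{k+1}^{\pi^*}(G^{s'}_a+\Psi)>V_{k+1}^{\pi^*}(G^s_a+\Psi)$. Both require $u'(w'+1)<u''(w''+1)$, which does not follow from $u'w'<u''w''$ (take $u'=3$, $w'=1$, $u''=1$, $w''=4$). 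So you have spotted a genuine gap that the paper glosses over.

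Your proposed patch, however, does not close it either. If you set $a^1=a^2+\Delta_{j'u'w'}$ and pit the optimality of $a^2$ at $s'$ against the perturbation $a^2+\Delta_{j'u''w''}$, the resulting inequality gives a \emph{lower} bound on the marginal hospital cost $\delta$, namely $\delta\geqslant (c_d-c_b)v_{j'}u''w''+\gamma\sum_\Psi P_\Psi\bigl[V_{k+1}^{\pi^*}(G^{s'}_{a^2}+\Psi)-V_{k+1}^{\pi^*}(G^{\sigma}_{a^2}+\Psi)\bigr]$, whereas what you need to show $Q_n^{\pi^*}(s^1,a^1)\leqslant Q_n^{\pi^*}(s^2,a^2)$ is an \emph{upper} bound on $\delta$. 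The sign of the optimality inequality runs the wrong way for your purpose, so the balancing estimate you anticipate ``will require the most care'' does not go through along the lines you outline. A clean rescue of (ii) would seem to need either an extra structural assumption ensuring that ageing preserves the priority comparison, or a multi-step coupling that tracks the two swapped patients forward until one hits its cap $W_{ju}$.
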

\begin{proof}
	See Appendix \ref{ProofV}.
\end{proof}
{ Proposition \ref{V} indicates that the properties of the single-period optimal cost $C_0(s)$ proposed in (i) and (iii) of Proposition \ref{ConvexC} also hold for the optimal value function $V^{\pi^*}(s)$.} Considering that $V^{\pi^*}(s)$ is monotonically increasing in $s$ and $P(s)$, we can prove the following properties of $\pi^*(s)$ and $Q^{\pi^*}(s,a)$ that enable us to simplify the exploration of the action space.
\begin{proposition}\label{pi}
	For any patient type $\{juw\}$: if $({c_d}-{c_b})v_{j}uw>c_o\bar{d}_{j}+{c_e}\bar{l}_{j}$ holds, then all the type-$\{juw\}$ patients are scheduled by $\pi^*(s)$.
\end{proposition}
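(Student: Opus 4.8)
The plan is to prove this by a one-step interchange argument applied to the optimal policy: I will show that any feasible action that leaves a type-$\{juw\}$ patient unscheduled, under the stated cost inequality, can be strictly improved, and therefore cannot be $\pi^*(s)$. Concretely, fix a type $\{juw\}$ with $(c_d-c_b)v_juw > c_o\bar{d}_j + c_e\bar{l}_j$ and an arbitrary state $s$, and write $a=\pi^*(s)$. The case $w=W_{ju}$ is immediate from the feasibility constraint (\ref{FeasibleAction}), so I would assume $w<W_{ju}$ and, for contradiction, $m_{juw}<n_{juw}$. Then the perturbed action $a'=a+\Delta_{juw}$ is feasible by (\ref{FeasibleAction}), and the goal is to compare $Q^{\pi^*}(s,a')$ with $Q^{\pi^*}(s,a)$.

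For the immediate cost I would read off (\ref{CostDef}) term by term: the surgery-cost term rises by exactly $c_b v_juw$, the waiting-cost term drops by exactly $c_d v_juw$, and, using that $x\mapsto(x-c)^+$ is nondecreasing and $1$-Lipschitz, the OR-overtime term rises by at most $c_o\bar{d}_j$ and the SICU term by at most $c_e\bar{l}_j$, so $C(s,a')-C(s,a)\le -(c_d-c_b)v_juw + c_o\bar{d}_j + c_e\bar{l}_j<0$. For the cost-to-go, I would use the post-action-state notation of the excerpt: scheduling one extra type-$\{juw\}$ patient with $w<W_{ju}$ only lowers the single coordinate $g_{ju,w+1}$, so $G^s_{a'}=G^s_a-\Delta_{ju,w+1}<G^s_a$ in the sense of Definition \ref{poset}. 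Since the arrival probabilities $P_\Psi$ are action-independent, I can compare $\sum_{s'}p(s,a',s')V^{\pi^*}(s')$ and $\sum_{s'}p(s,a,s')V^{\pi^*}(s')$ term by term over the arrival vectors and invoke the monotonicity of $V^{\pi^*}$ in $s$ (Proposition \ref{V}(i)) to conclude the cost-to-go does not increase. Adding the two contributions gives $Q^{\pi^*}(s,a')<Q^{\pi^*}(s,a)=\min_{a\in A(s)}Q^{\pi^*}(s,a)=V^{\pi^*}(s)$, contradicting the optimality of $\pi^*$; hence $m_{juw}=n_{juw}$, and since $s$ was arbitrary the proposition follows.

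The main obstacle is getting the hospital-cost perturbation bound right and, crucially, one-sided: the whole argument hinges on the increment in the two $(\cdot)^+$ terms being bounded above by $c_o\bar{d}_j+c_e\bar{l}_j$ (exactly what the Lipschitz property of the positive part delivers), so that the guaranteed patient-cost saving $(c_d-c_b)v_juw$ strictly dominates. A secondary point needing care is the term-by-term (coupling) comparison of the expected value functions: it is valid precisely because the perturbation shifts the post-action state downward in the partial order of Definition \ref{poset} while leaving the arrival distribution untouched, which is what lets Proposition \ref{V}(i) be applied inside the expectation. Everything else is routine bookkeeping.
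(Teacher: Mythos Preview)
Your proposal is correct and follows essentially the same route as the paper's proof: a one-step interchange on $\pi^*(s)$, bounding the increase in the hospital-related $(\cdot)^+$ terms by $c_o\bar d_j+c_e\bar l_j$ and using Proposition~\ref{V}(i) to show the expected cost-to-go does not increase, yielding the contradiction $Q^{\pi^*}(s,a')<Q^{\pi^*}(s,\pi^*(s))$. The only cosmetic difference is that the paper schedules all $x=n_{juw}-m^*_{juw}$ remaining patients at once (taking $a'=\pi^*(s)+x\Delta_{juw}$), whereas you add a single patient; both versions work, and your explicit separation of the $w=W_{ju}$ case is a nice touch that the paper leaves implicit in the feasibility constraint.
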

\begin{proof}
	See Appendix \ref{Proofpi}.
\end{proof}
\begin{proposition}\label{PropVQ}
$\forall s\in S$: $Q^{\pi^*}(s,a)$ is decreasing in $P(a)$.
\end{proposition}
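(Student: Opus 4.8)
The plan is to reduce the claim to a single elementary exchange and then to split $Q^{\pi^*}$ into its one-period term and its continuation term. It suffices to prove that whenever $a,a'\in A(s)$ satisfy $a=a'+\Delta_{j'u'w'}-\Delta_{j'u''w''}$ with $u'w'<u''w''$ — so that $P(a)<P(a')$ by Definition~\ref{pp} — one has $Q^{\pi^*}(s,a)\geq Q^{\pi^*}(s,a')$. Using $Q^{\pi^*}(s,a)=C(s,a)+\gamma\sum_{\Psi=\pmb{0}}^{+\infty}P_\Psi V^{\pi^*}(G^s_a+\Psi)$ and the fact that the weights $P_\Psi$ are the same for both actions, the difference $Q^{\pi^*}(s,a)-Q^{\pi^*}(s,a')$ equals $\big[C(s,a)-C(s,a')\big]+\gamma\sum_{\Psi=\pmb{0}}^{+\infty}P_\Psi\big[V^{\pi^*}(G^s_a+\Psi)-V^{\pi^*}(G^s_{a'}+\Psi)\big]$, and I would bound the two bracketed quantities separately.

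For the one-period term, $P(a)$ and $P(a')$ are comparable, so Proposition~\ref{ConvexC}(ii) gives $C_h(a)=C_h(a')$ and the hospital-related cost cancels; substituting into the first two sums of~(\ref{CostDef}) and using $c_d\geq c_b$ then yields $C(s,a)-C(s,a')=C_p(s,a)-C_p(s,a')=(c_d-c_b)\,v_{j'}\,(u''w''-u'w')\geq 0$. For the continuation term I would first compute the post-action states and observe that they agree everywhere except at the two slots where the deferred patients reappear one week older, namely $G^s_{a'}=G^s_a+\Delta_{j'u'(w'+1)}-\Delta_{j'u''(w''+1)}$, and the same relation persists after adding any common arrival vector $\Psi$. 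When $u'(w'+1)<u''(w''+1)$ — which is automatic as soon as $u'\leq u''$, and in particular for every exchange within one urgency group — Definition~\ref{pp} gives $P(G^s_{a'}+\Psi)<P(G^s_a+\Psi)$, so Proposition~\ref{V}(ii) gives $V^{\pi^*}(G^s_{a'}+\Psi)\leq V^{\pi^*}(G^s_a+\Psi)$ for every $\Psi$; combined with the one-period bound this already gives $Q^{\pi^*}(s,a)\geq Q^{\pi^*}(s,a')$.

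The delicate case is the opposite configuration, $u'>u''$ with $u'(w'+1)\geq u''(w''+1)$, in which the more urgent but fresher patient $\{j'u'w'\}$ overtakes the staler patient $\{j'u''w''\}$ after one week of aging, so the post-action priority order is inverted and the crude Proposition~\ref{V}(ii) estimate points the wrong way. Here the plan is to show that the one-period saving $(c_d-c_b)v_{j'}(u''w''-u'w')$ still dominates $\gamma$ times the resulting increase in continuation value: I would bound $V^{\pi^*}(G^s_a+\Psi)-V^{\pi^*}(G^s_{a'}+\Psi)$ from below (equivalently, the reversed difference from above) by the marginal value of swapping one type-$\{j'u'(w'+1)\}$ patient for one type-$\{j'u''(w''+1)\}$ patient in the same specialty, exploiting that such a single patient adds at most one specialty-$j'$ surgery — hence at most $c_o\bar{d}_{j'}+c_e\bar{l}_{j'}$ of over-utilization cost — and must be operated on eventually, which gives a two-sided estimate of that marginal value that is Lipschitz in the product $uw$. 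I expect this to be the only hard step, and closing it cleanly may well require a mild condition relating $c_d,c_b,c_o,c_e$ and $\gamma$ (equivalently, stating the monotonicity of $Q^{\pi^*}(s,\cdot)$ in $P(a)$ under the proviso that one week of aging never inverts priorities), since when the margin $c_d-c_b$ is small and $\gamma$ is close to $1$ the continuation penalty can in principle outweigh the immediate saving.
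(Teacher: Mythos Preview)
Your decomposition of $Q^{\pi^*}(s,a)-Q^{\pi^*}(s,a')$ into the one-period term $(c_d-c_b)v_{j'}(u''w''-u'w')$ plus the discounted continuation term, together with your use of Proposition~\ref{ConvexC}(ii) to cancel $C_h$ and of Proposition~\ref{V}(ii) to sign the continuation term, is precisely the paper's argument. In the paper the entire proof is your ``easy case'': it writes $a=a'+\Delta_{j'u'w'}-\Delta_{j'u''w''}$ with $u'w'<u''w''$, asserts $P(G^s_a+\Psi)>P(G^s_{a'}+\Psi)$, invokes Proposition~\ref{V}(ii), and concludes.

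What you call the ``delicate case'' is simply not treated in the paper. You correctly compute $G^s_{a'}=G^s_a+\Delta_{j'u'(w'+1)}-\Delta_{j'u''(w''+1)}$, so by Definition~\ref{pp} the claimed comparison $P(G^s_a+\Psi)>P(G^s_{a'}+\Psi)$ requires $u'(w'+1)<u''(w''+1)$, which does \emph{not} follow from $u'w'<u''w''$ when $u'>u''$ (your own example $u'=3,w'=1,u''=1,w''=4$ gives $3<4$ but $6>5$). The paper's one-line assertion glosses over this, so you have in fact been more careful than the published proof and have located a genuine gap in it. Your proposed repair --- bounding the marginal continuation value of a single intra-specialty swap and possibly invoking a side condition on $c_d-c_b$, $c_o\bar d_{j'}+c_e\bar l_{j'}$, and $\gamma$ --- is a sensible direction, but nothing of the sort appears in the paper; its proof is complete only under the tacit hypothesis that aging by one week never inverts the order $u'w'<u''w''$.
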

\begin{proof}
	See Appendix \ref{ProofPropVQ}.
\end{proof}
{Proposition \ref{pi} suggests that, if the reduction $(c_d-c_b)v_{j}uw$ in the patient-related costs induced by scheduling a type-$\{juw\}$ patient is greater than the maximum possible increase $(c_o\bar{d}_{j}+c_e\bar{l}_{j})$ in the hospital-related costs, all the type-$\{juw\}$ patients should be scheduled in the current week without further postponement. Hence, the best action $\pi^*(s)$ under the optimal policy $\pi^*$ should hold the following property:
\begin{remark}\label{Remark1}
	For any patient type $\{juw\}$: if $(c_d-c_b)v_{j}uw>c_o\bar{d}_{j}+c_e\bar{l}_{j}$, then $m_{juw}=n_{juw}$ holds in $\pi^*(s)$.
\end{remark}
Proposition \ref{PropVQ} reveals another structural property that the best action $\pi^*(s)$ should hold: in every week and every specialty, the priority score of any scheduled patient should not be lower than that of any unscheduled patient. More specifically, if $\forall j\in\{1,...,J\}$: the total numbers of specialty-$j$ patients scheduled by actions $a$ and $\pi^*(s)$ are identical, i.e., $P(a)$ and $P(\pi^*(s))$ are comparable (as defined in Definition \ref{pp}), then $P(a)\leqslant P(\pi^*(s))$.
\begin{remark}\label{Remark2}
	$\forall s\in S$: there exists no action $a\in A(s)$ s.t. $P(a)>P(\pi^*(s))$.
\end{remark}
By excluding the actions that do not hold the properties summarized in Remarks \ref{Remark1} and \ref{Remark2}, $\pi^*(s)$ can be searched efficiently in a subset $A^*(s)$ of the full action set $A(s)$ ($A^*(s)\subseteq A(s)$). As $\|A^*(s)\|<\|A(s)\|$ holds for most states in our MDP model, the efficiency of exploring the action space can be significantly improved. The procedure of determining $A^*(s)$ is provided in Algorithm \ref{GreedyAction} and can be integrated into most MDP solution approaches, including DP-based exact algorithms and simulation-based ADP algorithms. }

\begin{algorithm}[!htb]
	\caption{Procedure of determining $A^*(s)$}\label{GreedyAction}
	\KwIn{state $s=\{n_{juw}\}$}
	\KwOut{action set $A^*(s)$}
	Initialize $a'=\{m'_{juw}\}=\pmb{0}$;\\
	\For{$j=1,2,...,J$}
	{
		\For{$u=1,2,...,U_j$}
		{
			\For{$w=1,2,...,W_{ju}$}
			{
				\lIf{$(({c_d}-{c_b})v_juw>c_o\bar{d}_j+{c_e}\bar{l}_j)\vee (w=W_{u})$}{$m'_{juw}=n_{juw}$}
			}
		}
	}
	Let $A^*(s)=\{a'\}$ and $I=\|A^*(s)\|=1$;\quad//\textit{$I$ is the size of $A^*(s)$}\\
	\For{$j=1,2,...,J$}
	{
		$N_j=\sum_{u=1}^{U_j}\sum_{w=1}^{W_{ju}}(n_{juw}-m'_{juw})$;\quad//\textit{number of specialty-$j$ patients with $({c_d}-{c_b})v_juw\leqslant c_o\bar{d}_j+{c_e}\bar{l}_j$}\\
		$M_j=0$;\quad//\textit{number of scheduled specialty-$j$ patients with $({c_d}-{c_b})v_juw\leqslant c_o\bar{d}_j+{c_e}\bar{l}_j$}\\
		$I=I\times (N_j+1)$;\quad//\textit{compute the size of $A^*(s)$}\\
	}
	\For{$i=1,2,...,I$}
	{
		$a_i=\{m_{juw}\}=a'$;\\
		//\textit{exhaust all the possible combinations of $M_j$}\\
		\For{$j=1,2,...,J$}
		{
			\If{$M_j<N_j$}
			{
				$M_j=M_j+1$;\\
				\lForEach{\textup{integer} $k\in[1,j-1]$}{$M_k=0$}
				break;
			}
		}
		//\textit{for each combination of $M_j$, find the action with the maximum value of $P(a)$}\\
		\For{$j=1,2,...,J$}
		{
			$\{u',w'\}=\arg\max_{(({c_d}-{c_b})v_juw\leqslant c_o\bar{d}_j+{c_e}\bar{l}_j)\wedge(w\neq W_u)} v_juw$;\\
			$x=M_j$;\\
			\While{true}
			{
				\uIf{$n_{ju'w'}\geqslant x$}
				{
					$m_{ju'w'}=x$;\\
					break;
				}
				\Else
				{
					$m_{ju'w'}=n_{ju'w'}$;\\
					$x=x-n_{ju'w'}$;\\
					$\{u',w'\}=\arg\max_{(uw\leqslant u'w')\wedge(w\neq W_u)\wedge((u\neq u')\vee(w\neq w'))} v_juw$;
				}
			}
		}
		$A^*(s)=A^*(s)\cup\{a_i\}$;\\
	}
\end{algorithm}

\section{Solution approach}\label{SecSol}
{The algorithm proposed in the previous section (Algorithm \ref{GreedyAction}) can reduce the number of actions to be evaluated for each state, thus facilitating the exploration of the action space. However, it cannot be independently employed to solve the MDP model proposed in Section \ref{SecModel}. As an acceleration technique, Algorithm \ref{GreedyAction} should always be integrated into a solution approach of MDP, such as PI, VI, and RTDP. For the patient admission control problem studied in this paper, there are $\sum_{j=1}^J\sum_{u=1}^{U_j}W_{ju}$ different patient types, and the dimension of the MDP model's state space is increasing in $\sum_{j=1}^J\sum_{u=1}^{U_j}W_{ju}$ at an exponential rate. Moreover, as there can be large numbers of $\sum_{j=1}^JU_j$ different types of newly arrived patients in each week, the number of possible subsequent states for any state–action pair is enormous, i.e., the outcome space is huge as well (exponentially increasing in $\sum_{j=1}^JU_j$). Therefore, even with the acceleration of Algorithm \ref{GreedyAction}, solving the MDP model using traditional DP-based algorithms is still computationally intractable. To cope with the curses of dimensionality in the state space and the outcome space, in this section, we propose an ADP algorithm based on reinforcement learning. 
	
The basic idea of the ADP algorithm is to reduce the model dimensionality by approximating the value function. It is well known that value function approximation can either be parametric or non-parametric: the former adopts a low-dimensional linear function to approximate the true optimal value function $V^{\pi^*}(s)$, while the latter does not rely on functional structures but directly approximates the values of observed instances \shortcite{ulmer2020meso}. A comprehensive overview of the two value function approximation methods is provided by \shortciteA{powell2007approximate}. Non-parametric methods often need to aggregate the state space and use lookup tables which may suffer from the curse of dimensionality \shortcite{powell2015tutorial,voelkel2020aggregation}. In comparison, parametric methods are easier to compute and more suitable for online computation \shortcite{powell2015tutorial}. In the ADP algorithm proposed in this paper, we adopt a parametric method that approximates the true optimal value function $V^{\pi^*}(s)$ using a classical feature-based linear function:
\begin{equation}\label{approx}
\hat{V}(s,\Theta)=\Phi^T(s)\Theta=\sum_{\xi=1}^{\Xi}\phi_\xi(s)\theta_\xi\approx V^{\pi^*}(s)
\end{equation}
In (\ref{approx}), $\phi_\xi(s)$ captures the $\xi$th feature of state $s$ and each state $s\in S$ has a corresponding feature vector $\Phi(s)=[\phi_1(s),...,\phi_\xi(s),...,\phi_\Xi(s)]^T$. For our MDP model, it is straightforward to define that each component $n_{juw}$ of state $s$ corresponds to a feature $\phi_\xi(s)$, thus the length of feature vector $\Phi(s)$ is $\Xi=\sum_{j=1}^{J}\sum_{u=1}^{U_j}W_{ju}$. $\Theta=[\theta_1,...,\theta_\xi,...,\theta_\Xi]^T$ in (\ref{approx}) is a parameter vector of length $\Xi$ and each component $\theta_\xi$ is a tunable parameter. We want to compute the optimal parameter vector $\Theta^*$ which minimizes the gaps between $\hat{V}(s,\Theta)$ and $V^{\pi^*}(s)$:
\begin{equation}
\Theta^*=\mathop{\arg\min}_{\Theta\in\mathbb{R}^\Xi}\sum_{s\in S}\left[\hat{V}(s,\Theta)-V^{\pi^*}(s)\right]^2
\end{equation}
With $\Theta^*$, we can compute a near-optimal policy based on the approximate value function $\hat{V}(s,\Theta^*)$. As the computation of the complex value function $V^{\pi^*}(s)$ in the high-dimensional state space $S$ is replaced by the computation of the low-dimensional parameter vector $\Theta^*$, the curse of dimensionality in the state space can be drastically alleviated.

Many reinforcement learning methods can be used to compute $\Theta^*$ (by iteratively updating $\Theta$), such as temporal difference learning \shortcite<TD($\lambda$):>{sutton1988learning,tsitsiklis1997analysis}, least-squares temporal difference learning \shortcite<LS-TD($\lambda$):>{barto1995learning,boyan2002technical}, and recursive least-squares temporal difference learning \shortcite<RLS-TD($\lambda$):>{barto1995learning,xu2002efficient}. In order to combine reinforcement learning with ADP, we adopt an on-policy learning strategy introduced by \shortciteA{powell2007approximate}, under which the parameter vector $\Theta$ and the policy based on it are updated simultaneously and all the computation is performed online. 

Before making any decision, we first initialize $\Theta$ arbitrarily, e.g., $\Theta_0=\pmb0$. Then, at each decision epoch (week) $\tau$, we randomly sample the stochastic information (i.e., patient arrivals) of the next $N$ epochs. At each simulated epoch $n$, we select action $a_n$ for the current state $s_n$ (note that $s_1=s_\tau$) according to the approximate value function with the latest parameter vector $\Theta_{n-1}$:
\begin{equation}\label{EqOnpolicySimu}
	a_n=\mathop{\arg\min}_{a\in A^*(s_n)}\left[C(s_n,a)+\gamma\Phi^T(G^{s_n}_{a}+\Psi_n^a)\Theta_{n-1}\right]
\end{equation}
where $G^{s_n}_a$ is the post-action state of state-action pair $(s_n,a)$; $\Psi_n^a$ is a vector of the numbers of new patients (defined in Section \ref{SecAna}) and it is independently and identically sampled from Poisson distributions for each action $a$ and each epoch $n$. Then, we update $\Theta_{n-1}$ to $\Theta_n$ and compute the next state to be visited $s_{n+1}=G^{s_n}_{a_n}+\Psi_n^{a_n}$. In this way, we can obtain a trajectory $(s_1,a_1,\Psi_1^{a_1},s_2,a_2,\Psi_2^{a_2},...,s_N,a_N,\Psi_N^{a_N})$ with $s_1=s_\tau$.

The process described above can be repeated multiple times for the same state $s_\tau$, until some termination criterion is satisfied. That is, at each real decision epoch $\tau$, we generate $M\geqslant1$ trajectories with independently and identically sampled vectors $\Psi$. Thus, parameter vector $\Theta$ and the policy based on it are updated $M\times N$ times before we determine the action to be executed in week $\tau$:
\begin{equation}\label{EqOnpolicyReal}
a_\tau=\mathop{\arg\min}_{a\in A^*(s_\tau)}\left[C(s_\tau,a)+\gamma\Phi^T(G^{s_\tau}_{a}+\bar{\Psi})\Theta_\tau\right]
\end{equation}
where vector $\bar{\Psi}$ consists of the expected arrival numbers $\bar{n}_{ju}$ of $\sum_{j=1}^JU_j$ patient types, and $\Theta_\tau$ is the latest updated parameter vector. From (\ref{EqOnpolicySimu}) and (\ref{EqOnpolicyReal}), it can be seen that all the actions we take in simulations ($a_n$) and in the real world ($a_\tau$) are selected according to the approximate value function $\hat{V}(s,\Theta)$. This is the so-called on-policy learning strategy. An advantage of this strategy is that we explore the outcome space along randomly sampled trajectories, rather than evaluating all the possible subsequent states of each state-action pair. In this way, the curse of dimensionality in the outcome space can be effectively solved.

Next, we briefly present the reinforcement learning method integrated into the ADP algorithm for updating $\Theta$. After action $a_n$ is computed by (\ref{EqOnpolicySimu}) at simulated epoch $n$, we compute the gap between two successive value function approximations, i.e., the temporal difference or TD error:
\begin{equation}\label{TDerror}
e_n=C(s_n,a_n)+\gamma\hat{V}(s_{n+1},\Theta_{n-1})-\hat{V}(s_n,\Theta_{n-1})\\
=C(s_n,a_n)-[\Phi(s_n)-\gamma\Phi(s_{n+1})]^T\Theta_{n-1}
\end{equation}

The TD($\lambda$) method introduced by \shortciteA{sutton1988learning} updates parameter vector $\Theta$ using gradient descent:
\begin{equation}\label{GradientDescent}
\Theta_n=\Theta_{n-1}+\eta_ne_n\sum_{k=1}^{n}(\gamma\lambda)^{n-k}\nabla\hat{V}(s_k,\Theta_{n-1})
\end{equation}
where $\eta_n$ is a sequence of step-size parameters, $\lambda\in[0,1]$ is a decaying factor, and gradient $\nabla\hat{V}(s_k,\Theta_{n-1})=\Phi(s_k)$ is the vector of partial derivatives with respect to the components of $\Theta_{n-1}$. Equation (\ref{GradientDescent}) can be simplified by defining $z_n$ as the eligibility trace which implicitly stores the history of a trajectory  \shortcite{tsitsiklis1997analysis}:
\begin{equation}\label{eligibility}
z_n=\sum_{k=1}^{n}(\gamma\lambda)^{n-k}\nabla\hat{V}(s_k,\Theta_{n-1})=\sum_{k=1}^{n}(\gamma\lambda)^{n-k}\Phi(s_k)
\end{equation}
Thus, TD($\lambda$) can be represented by the following recursive equations:
\begin{equation}\label{TDlambda}
\left\{\begin{split}
&z_n=\Phi(s_n)+\gamma\lambda z_{n-1}\\
&\Theta_{n}=\Theta_{n-1}+\eta_ne_nz_n
\end{split}\right.
\end{equation}
where $z$ is often arbitrarily initialized, e.g., $z_0=\pmb0$. The convergence with probability one of TD($\lambda$) has been established by \shortciteA{tsitsiklis1997analysis}. However, TD($\lambda$) is particularly impacted by the step-size parameters $\eta_n$, as a poor choice of $\eta_n$ can drastically slow down the convergence \shortcite{barto1995learning}. To eliminate $\eta_n$, \shortciteA{boyan2002technical} extends TD($\lambda$) to LS-TD($\lambda$) based on the fact that the changes of $\Theta$ made by TD($\lambda$) are in the following form:
\begin{equation}\label{LS-TD}
\Theta_n=\left\{\sum_{k=1}^nz_k[\Phi(s_k)-\gamma\Phi(s_{k+1})]^T\right\}^{-1}\left[\sum_{k=1}^nC(s_k,a_k)z_k\right]=\mathcal{A}_n^{-1}b_n
\end{equation}
It can be seen from (\ref{LS-TD}) that step-size parameters $\eta_n$ are eliminated, but the inverse of a $\Xi\times\Xi$ matrix $\mathcal{A}_n$ has to be computed at each time epoch, hence the computational complexity of LS-TD($\lambda$) is $O(\Xi^3)$. 

Further, \shortciteA{xu2002efficient} improve LS-TD($\lambda$) using the recursive least-squares (RLS) method of \shortciteA{ljung1983theory}. Since the RLS-TD($\lambda$) method proposed by \shortciteA{xu2002efficient} has a lower computational complexity $O(\Xi^2)$, we employ RLS-TD($\lambda$) to update $\Theta$ in this paper. To facilitate the computation, RLS-TD($\lambda$) defines a $\Xi\times \Xi$ variance matrix $\mathcal{P}_n=\mathcal{A}_n^{-1}=\left\{\mathcal{A}_{n-1}+z_n[\Phi(s_n)-\gamma\Phi(s_{n+1})]^T\right\}^{-1}$. By Lemma 2.1 of \shortciteA{ljung1983theory}, $\mathcal{P}_n$ can be written in the following recursive form: 
\begin{equation}\label{VarianceMatrix}
\mathcal{P}_n=\mathcal{P}_{n-1}-\mathcal{P}_{n-1}z_n\left\{1+[\Phi(s_n)-\gamma\Phi(s_{n+1})]^T\mathcal{P}_{n-1}z_n\right\}^{-1}[\Phi(s_n)-\gamma\Phi(s_{n+1})]^T\mathcal{P}_{n-1}\\
\end{equation}
Then, using the RLS method with (\ref{LS-TD}) and (\ref{VarianceMatrix}), $\Theta$ can be updated in the following way:
\begin{equation}\label{ThetaRLSTD}
\Theta_n=\Theta_{n-1}+\frac{\mathcal{P}_{n-1}z_ne_n}{1+[\Phi(s_n)-\gamma\Phi(s_{n+1})]^T\mathcal{P}_{n-1}z_n}
\end{equation}
Combining (\ref{TDerror}), the first line of (\ref{TDlambda}), (\ref{VarianceMatrix}), and (\ref{ThetaRLSTD}), we have the following update rules of RLS-TD($\lambda$):
\begin{equation}\label{RLSTD}
\left\{
	\begin{split}
		&e_n=C(s_n,a_n)-[\Phi(s_n)-\gamma\Phi(s_{n+1})]^T\Theta_{n-1}\\
		&z_n=\gamma\lambda z_{n-1}+\Phi(s_n)\\
		&\mathcal{P}_n=\mathcal{P}_{n-1}-\frac{\mathcal{P}_{n-1}z_n[\Phi(s_n)-\gamma\Phi(s_{n+1})]^T\mathcal{P}_{n-1}}{1+[\Phi(s_n)-\gamma\Phi(s_{n+1})]^T\mathcal{P}_{n-1}z_n}\\
		&\Theta_n=\Theta_{n-1}+\frac{\mathcal{P}_{n-1}z_ne_n}{1+[\Phi(s_n)-\gamma\Phi(s_{n+1})]^T\mathcal{P}_{n-1}z_n}
	\end{split}
\right.
\end{equation}
where $\mathcal{P}_n$ is initialized as $\mathcal{P}_0=\beta \mathcal{I}$ ($\beta$ is a positive real number and $\mathcal{I}$ is the identity matrix).

Based on the methods discussed above, including parametric value function approximation (\ref{approx}), on-policy learning, and RLS-TD($\lambda$), we develop the ADP algorithm presented in Algorithm \ref{ADP}. The depth of each sampled trajectory is fixed to a positive integer $N$. In each week, the process of sampling new trajectories terminates when the relative improvement of $\Theta$ is lower than a threshold $\epsilon>0$ (line \ref{stopcriterion}). Before making the first decision in the first week $\tau=1$, all the components of $z_0$, $\mathcal{P}_0$, and $\Theta_0$ are initialized as $0$. Then, they are continuously updated in all the following weeks. In line \ref{skipstep1} and \ref{skipstep2} of Algorithm \ref{ADP}, we assume that Algorithm \ref{GreedyAction} is used to reduce the actions to be evaluated. If Algorithm \ref{GreedyAction} is not used, line \ref{skipstep1} and \ref{skipstep2} should be skipped and the reduced action spaces $A^*(s_n)$ and $A^*(s_\tau)$ in Algorithm \ref{ADP} should be replaced with the full action spaces $A(s_n)$ and $A(s_\tau)$, respectively.}
\begin{algorithm}[!htb]
	
	\caption{ADP algorithm based on RLS–TD($\lambda$) learning}\label{ADP}
	\KwIn{$s_\tau$, $z_{\tau-1}$, $\mathcal{P}_{\tau-1}$, and $\Theta_{\tau-1}$}
	Initialize $s_1=s_\tau$, $z_0=z_{\tau-1}$, $\mathcal{P}_0=\mathcal{P}_{\tau-1}$, and $\Theta_0=\Theta_{\tau-1}$;\\
	\While{true}
	{
		//\textit{A new trial (trajectory) starts here};\\
		\For{$n=1,2,...,N$}
		{
			Determine $A^*(s_n)$ by Algorithm \ref{GreedyAction}; //\textit{This step can be skipped;}\\\label{skipstep1}
			\ForEach{$a\in A^*(s_n)$}
			{
				Randomly sample $\Psi_n^a$ from Poisson distributions;
			}
			Compute $a_n$ by (\ref{EqOnpolicySimu});\\
			Compute $s_{n+1}=G^{s_n}_{a_n}+\Psi^{a_n}_n$;\\
			Update $z_n$, $\mathcal{P}_n$, and $\Theta_n$ by (\ref{RLSTD});\\
		}
		//\textit{A trial (trajectory) ends here};\\
		\lIf{$\|\frac{\Theta_n-\Theta_0}{\Theta_0}\|<\epsilon$}{break}\label{stopcriterion}
		Let $s_1=s_\tau$, $z_0=z_n$, $\mathcal{P}_0=\mathcal{P}_n$, and $\Theta_0=\Theta_n$;\\
	}
	Determine $A^*(s_\tau)$ by Algorithm \ref{GreedyAction}; //\textit{This step can be skipped.}\\\label{skipstep2}
	Let $z_{\tau}=z_n$, $\mathcal{P}_{\tau}=\mathcal{P}_n$, and $\Theta_{\tau}=\Theta_n$;\\
	Compute $a_\tau$ by (\ref{EqOnpolicyReal});\\
	\KwOut{$a_\tau$, $z_{\tau}$, $\mathcal{P}_{\tau}$, and $\Theta_{\tau}$}
\end{algorithm}

\section{Experimental results}\label{SecExp}
{We conduct numerical experiments on different test problems to validate the efficiency and accuracy of the proposed algorithms. All the programs are coded in C++ and run on a PC with an Intel(R) Core(TM) i7-3770 CPU @3.40GHz processor and a RAM of 8GB. In Section \ref{Exp1}, the algorithms proposed in this paper and several exact methodologies are employed to solve a small-sized test problem. We compare the computational performances of these solution approaches before examining the sensitivity of the resulting policy and the CPU time to the problem parameters. In Section \ref{Exp2}, we solve a large-sized single-specialty test problem and perform sensitivity analyses for the key parameters of the RLS–TD($\lambda$)-based ADP algorithm. Finally, we employ the proposed algorithms to solve a realistically sized multi-specialty patient admission control problem and present the results in Section \ref{Exp3}. 
	
{The quality of each policy computed in this section is evaluated through a numerical simulation with a finite time length $\tau_{max}$. In each simulated week, the numbers of different types of newly arrived patients are randomly sampled from Poisson distributions with constant arrival rates $\bar{n}_{ju}$. The values of $\tau_{max}$ and $\bar{n}_{ju}$ are different for different test problems. When multiple policies are computed for the same test problem, the value of $\tau_{max}$ and the sequences of patient arrivals $(\Psi_1,\Psi_2,...,\Psi_{\tau_{max}})$ are identical for the simulations of these policies. Each patient's surgery duration and LOS are realized when he/she is scheduled for surgery. The cost of each week is computed by taking the average value of cost function (\ref{CostDef}) in a large number of scenarios. Each scenario contains the realizations of all the scheduled patients' surgery durations and LOSs, which are randomly sampled from lognormal distributions. We arbitrarily set the scenario number to 10000.}

The notations used in this section for measuring the computational performances of the employed algorithms are listed in Table \ref{NotationsExpRes}.}

\begin{table}[!htb]
	\centering
	\small
	\caption{Definitions of variables used in Section \ref{SecExp}}\label{NotationsExpRes}
		\begin{tabular}{cl}
			\hline
			Variable & Definition \\
			\hline
			$T$ & total CPU time (s) \\
			$t$ & CPU time consumed in one week (ms) \\
			$\omega_{ju}$ & waiting time of {specialty-$j$} patients {with urgency coefficient} $u$ (week)\\
			{$o$} & {over-utilization of ORs in all specialties during one week (h)} \\
			{$o_j$} & over-utilization of ORs in specialty $j$ during one week (h) \\
			{$e$} & {shortage} of SICU recovery beds during one week (bed-day) \\
			$c$ & {total} cost of one week \\
			{$c_p$} & {{patient-related cost of one week}} \\
			{$c_h$} & {{hospital-related cost of one week}} \\
			$\bar{x}$ & mean of variable $x$ \\
			$\sigma(x)$ & standard deviation of variable $x$ \\
			$\|A\|$ & {total number of feasible actions for all the visited states} \\
			$\|A^*\|$ & total number of actually evaluated actions \\
			\hline
		\end{tabular}
\end{table}

\subsection{Algorithm comparisons and sensitivity analyses of problem parameters}\label{Exp1}
{
In this subsection, we conduct numerical experiments for an arbitrarily generated test problem. The proposed ADP algorithm is compared to two DP-based algorithms: VI and VPI–RTDP, whose complete procedures can be found in \shortciteA{kolobov2012planning} and \shortciteA{Sanner2009VPI}, respectively. For each algorithm employed, we search for the {best} actions in two different ways: by evaluating the action sets $A^*(s)$ determined by Algorithm \ref{GreedyAction} and by enumerating the {full} action sets $A(s)$. {We use a superscript * to highlight the algorithms with $A^*(s)$ adopted.} For example, VI* searches for the {best} actions in $A^*(s)$, while VI evaluates all the actions in $A(s)$. Due to the low computational efficiency of the DP-based algorithms, the test problem solved in this subsection is much smaller than the realistically sized ones. 

The small-sized test problem is a two-specialty patient admission control problem. The relative importance of the two specialties $j=1,2$ are $[v_1,v_2]=[1,2]$. Patients of the two specialties are divided into two urgency groups $u=1,2$, and their maximum {recommended} waiting times are $[W_{11},W_{12},W_{21},W_{21}]=[4,2,3,2]$ weeks. New patient arrivals are assumed to follow Poisson distributions, with the parameters $[\bar{n}_{11},\bar{n}_{12},\bar{n}_{21},\bar{n}_{22}]=[1.0,0.5,0.25,0.25]$. Considering that the DP-based algorithms are unable to tackle infinite state spaces, we truncate the Poisson distributions by omitting the values whose probabilities are lower than 0.005; thus $\tilde{n}_{11}<5$, $\tilde{n}_{12}<4$, $\tilde{n}_{21}<3$, and $\tilde{n}_{22}<3$. The size of the state space of this problem can thereby be calculated as $|S|=5^4\times4^2\times3^3\times3^2=2.43\times10^6$. Surgery durations and LOSs follow lognormal distributions, with $[\bar{d}_1,\bar{d}_2]=[2,4]$ hours and $[\bar{l}_1,\bar{l}_2]=[4,2]$ days, and the standard deviations of surgery durations and the LOS for each specialty are assumed to be equal to their means. The unit costs of the small-sized test problem are determined as $[{c_b},{c_d}]=[50,100]$, $c_o=400$ per hour and ${c_e}=1000$ per patient-day. The regular OR capacities for the two specialties are $[B_1,B_2]=[3,2]$ hours, and the available SICU capacity is $R=7$ bed-days per week. The estimated availability rates of ORs and the SICU are arbitrarily determined as $\rho_1=\rho_2=1$ in the small-sized test problem.

{Using the Monte-Carlo simulation method, we randomly sample arrival information over 1,000 weeks, as well as sampling the surgery durations and LOSs of all the arrived patients. Using these samples, we carry out a numerical simulation for each solution approach for 1,000 consecutive weeks.} The parameters of the ADP algorithm are arbitrarily set as $\lambda=0$, $\beta=1$, $N=5000$ and $\epsilon=0.0001$ in this subsection, while the discount factor of the MDP {model} is $\gamma=0.99$. The computational results are presented in {Table \ref{TabExpSmall}}.

\begin{table}[!htb]
	\centering
	\scriptsize
	
	\caption{Experimental results of the small-sized test problem}\label{TabExpSmall}
	\begin{adjustbox}{center}
	\begin{threeparttable}
		\addtolength\tabcolsep{-0.25em}
		\begin{tabular}{llccccccccccccccccrc}
			\hline
			Algorithm & Type & $\bar{\omega}_{11}$ & $\sigma(\omega_{11})$ & $\bar{\omega}_{12}$ & $\sigma(\omega_{12})$ & $\bar{\omega}_{21}$ & $\sigma(\omega_{21})$ & $\bar{\omega}_{22}$ & $\sigma(\omega_{22})$ & $\bar{o}_{1}$ & $\sigma(o_1)$ & $\bar{o}_2$ & $\sigma(o_2)$ & $\bar{e}$ & $\sigma(e)$ & $\bar{c}$ & $\sigma(c)$ & \multicolumn{1}{c}{$T$} & $\frac{\|A^*\|}{\|A\|}$ \\
			\hline
			VI & off-line & 2.293 & 0.938 & 1.293 & 0.455 & 1.463 & 0.599 & 1.051 & 0.219 & 0.965 & 2.081 & 1.162 & 2.970 & 2.341 & 5.222 &       3,716  &       5,669  & 85,492 & 1.000 \\
			VI* & off-line & 2.280 & 0.922 & 1.304 & 0.460 & 1.474 & 0.606 & 1.051 & 0.219 & 0.961 & 2.099 & 1.160 & 2.971 & 2.340 & 5.234 &       3,711  &       5,706  &  6,835 & 0.063 \\
			VPI-RTDP & on-line & 2.277 & 0.925 & 1.316 & 0.465 & 1.452 & 0.604 & 1.046 & 0.210 & 0.957 & 2.101 & 1.161 & 2.973 & 2.336 & 5.226 &       3,704  &       5,701  &  5,437 & 1.000 \\
			VPI-RTDP* & on-line & 2.277 & 0.925 & 1.316 & 0.465 & 1.452 & 0.604 & 1.046 & 0.210 & 0.957 & 2.101 & 1.161 & 2.973 & 2.336 & 5.226 &       3,704  &       5,701  &  1,450 & 0.216 \\
			ADP & on-line & 2.521 & 1.090 & 1.432 & 0.495 & 1.099 & 0.299 & 1.000 & 0.000 & 0.997 & 2.168 & 1.186 & 3.037 & 2.373 & 5.383 &       3,821  &       5,808  &     14 & 1.000 \\
			ADP* & on-line & 2.521 & 1.092 & 1.432 & 0.495 & 1.096 & 0.294 & 1.000 & 0.000 & 0.998 & 2.166 & 1.186 & 3.026 & 2.371 & 5.384 &       3,820  &       5,802  &     12 & 0.901 \\\hline
		\end{tabular}
		\begin{tablenotes}
			\item Algorithms with superscript * evaluate the action set $A^*(s)$ determined by Algorithm \ref{GreedyAction}, the others enumerate the entire action space $A^*(s)=A(s)$. 
		\end{tablenotes}
	\end{threeparttable}
	\end{adjustbox}
\end{table}

From {Table \ref{TabExpSmall}}, it can be observed that the RLS–TD($\lambda$)-based ADP algorithm is the most efficient solution approach. The CPU time consumed by ADP is only 0.02\% and 0.26\% of that consumed by VI and VPI–RTDP, respectively. Moreover, applying Algorithm \ref{GreedyAction} drastically improves the computational efficiency of all the employed solution approaches, and does not lead to any policy deterioration. To be specific, when applied to VI, VPI–RTDP, and ADP, Algorithm \ref{GreedyAction} reduces CPU time by 92.00\%, 73.33\%, and 14.28\%, respectively. {Table \ref{TabExpSmall}} also indicates that the resulting policies of the algorithms employed are similar. The relative gap between the total costs of ADP/ADP* and those of the DP-based algorithms is below 3.16\%. Among the DP-based algorithms, we can observe that the total costs of VI/VI* are slightly higher than those of VPI–RTDP/VPI–RTDP*, implying that the policies of VI/VI* are {slightly further from convergence} than those of VPI–RTDP/VPI–RTDP*. Obviously, the former can be further improved by modifying the user-defined stopping criterion, but {doing so} requires even more CPU time. In summary, the RLS–TD($\lambda$)-based ADP algorithm provides a high-quality, near-optimal policy for the problem under study, and Algorithm \ref{GreedyAction} is capable of accelerating the employed algorithms without any deterioration of the policy.

Next, we perform sensitivity analyses to evaluate the effects of the problem parameters on the resulting policy and the computational complexity. We use an exact algorithm (VPI–RTDP*) and the proposed ADP* algorithm to solve the small-sized two-specialty test problem with different values for the problem parameters ${c_d}$, $c_o$, ${c_e}$, $B_1$, $B_2$, and $R$. The results of the sensitivity analyses are demonstrated in Figures \ref{FigSensCw} to \ref{FigSensR}. It can be seen from these figures that the ADP* policy is very close to that of VPI–RTDP* in most cases, despite variations in the problem parameters; again, this validates the accuracy of the ADP* algorithm. Specifically, Figure \ref{FigSensCw} shows that changing the unit waiting cost ${c_d}$ (or the ratio of ${c_d}$ to ${c_b}$) only leads to slight changes in the resulting policy. In contrast, Figures \ref{FigSensCo} and \ref{FigSensCe} illustrate that the hospital-related cost $c_h$ and the total cost $c$ are increasing in both the unit OR overtime cost $c_o$ and the unit SICU excess cost ${c_e}$, while Figures \ref{FigSensB1}, \ref{FigSensB2}, and \ref{FigSensR} show that $c_h$ and $c$ are decreasing in the OR capacity $B_1$, $B_2$, and the SICU capacity $R$. From Figures \ref{FigSensCw} to \ref{FigSensR}, we can also observe that the patient-related cost $c_p$ remains relatively stable as the problem parameters change, which implies that the resulting policy is not very sensitive to the problem parameters. In terms of computational complexity, Figures \ref{FigSensCw} to \ref{FigSensR} show that the CPU time consumed by VPI–RTDP* is increasing in $c_o$, ${c_e}$ and decreasing in ${c_d}$, $B_2$, $R$, while the CPU time for ADP* does not change too much as the problem parameters vary. Additionally, the CPU time for ADP* is much less than that of VPI-RTDP* in all cases.
 
\begin{figure}[!htb]
	\includegraphics[width=6.5in]{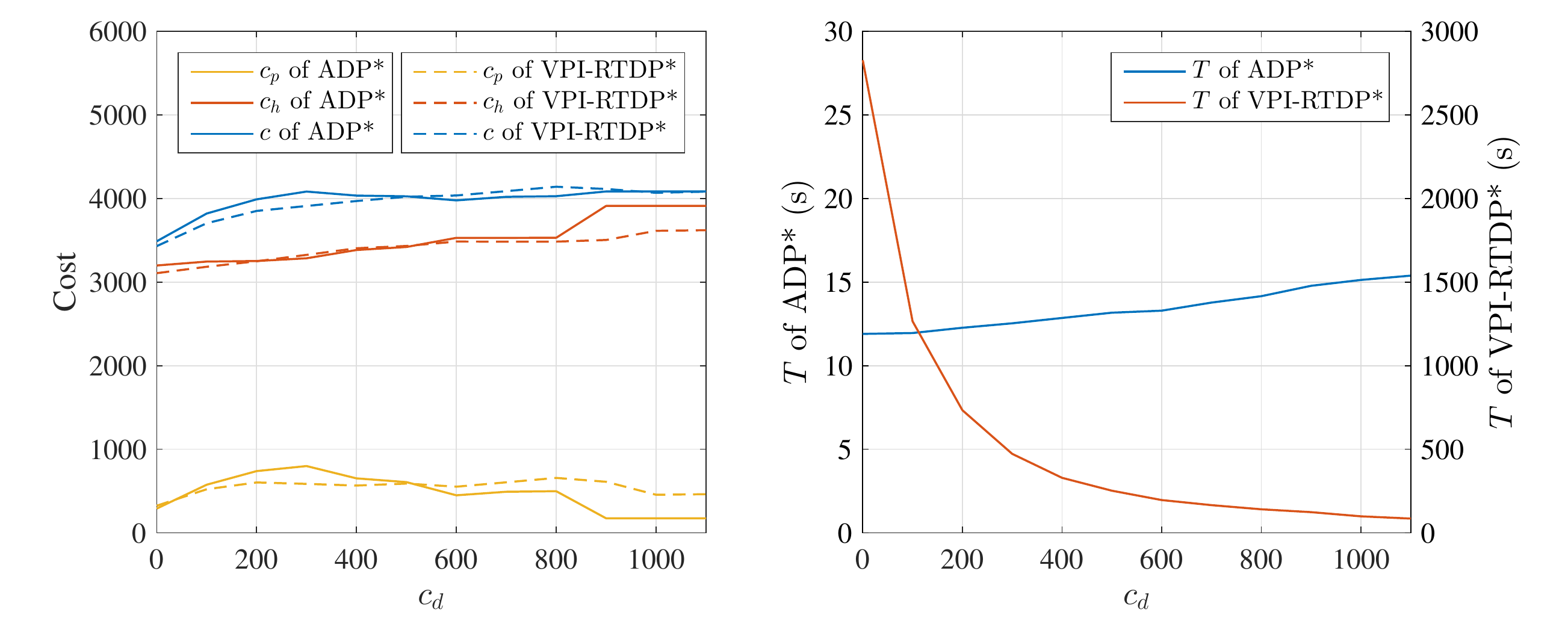}
	\vspace{-0.5cm}
	\caption{Sensitivity analysis for the unit waiting cost ${c_d}$}\label{FigSensCw}
\end{figure} 

\begin{figure}[!htb]
	\includegraphics[width=6.5in]{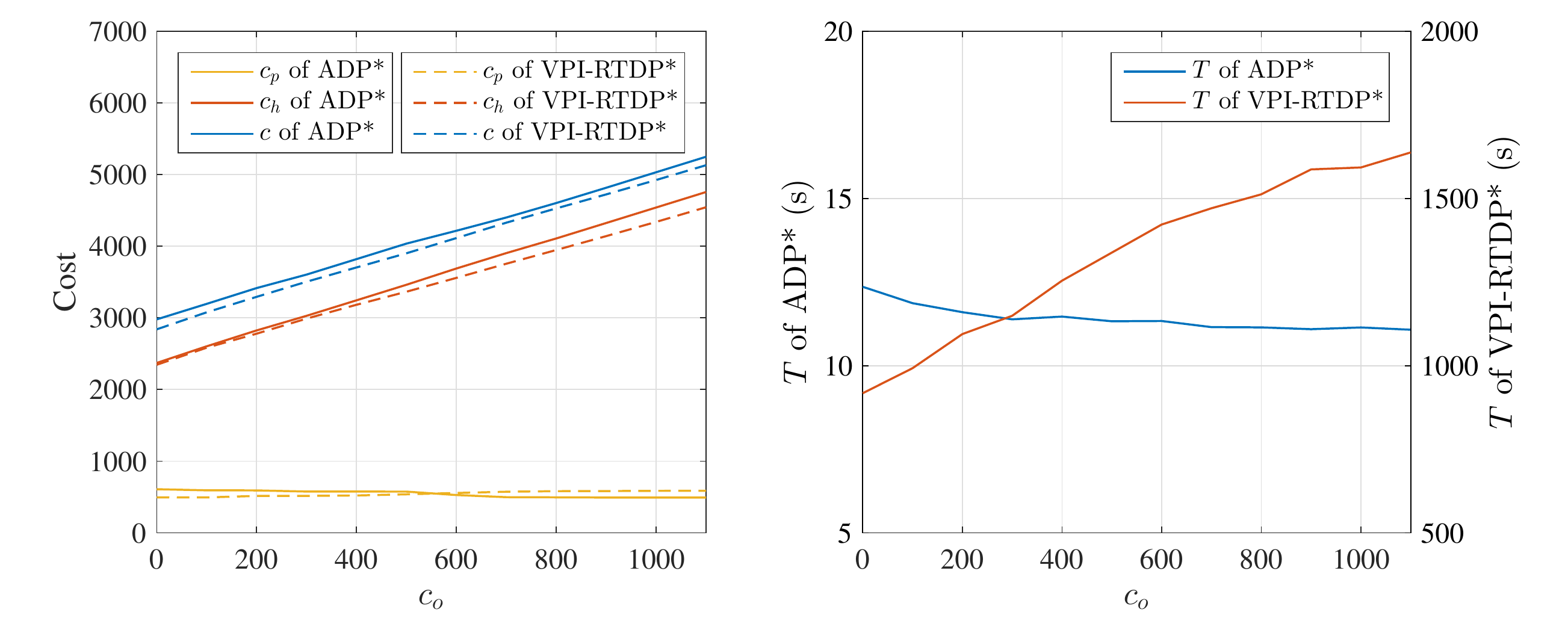}
	\vspace{-0.5cm}
	\caption{Sensitivity analysis for the unit cost $c_o$ of overusing ORs}\label{FigSensCo}
\end{figure} 

\begin{figure}[!htb]
	\includegraphics[width=6.5in]{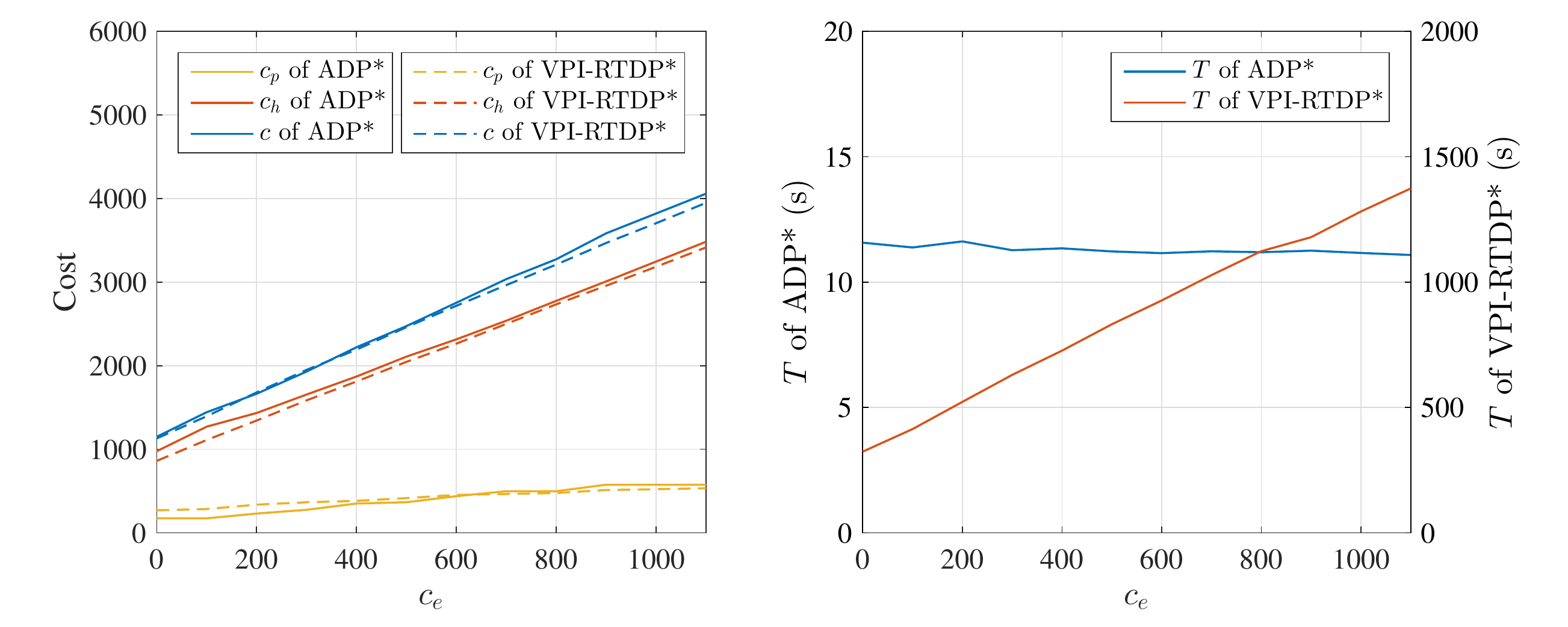}
	\vspace{-0.5cm}
	\caption{Sensitivity analysis for the unit cost ${c_e}$ of exceeding SICU capacity}\label{FigSensCe}
\end{figure} 

\begin{figure}[!htb]
	\includegraphics[width=6.5in]{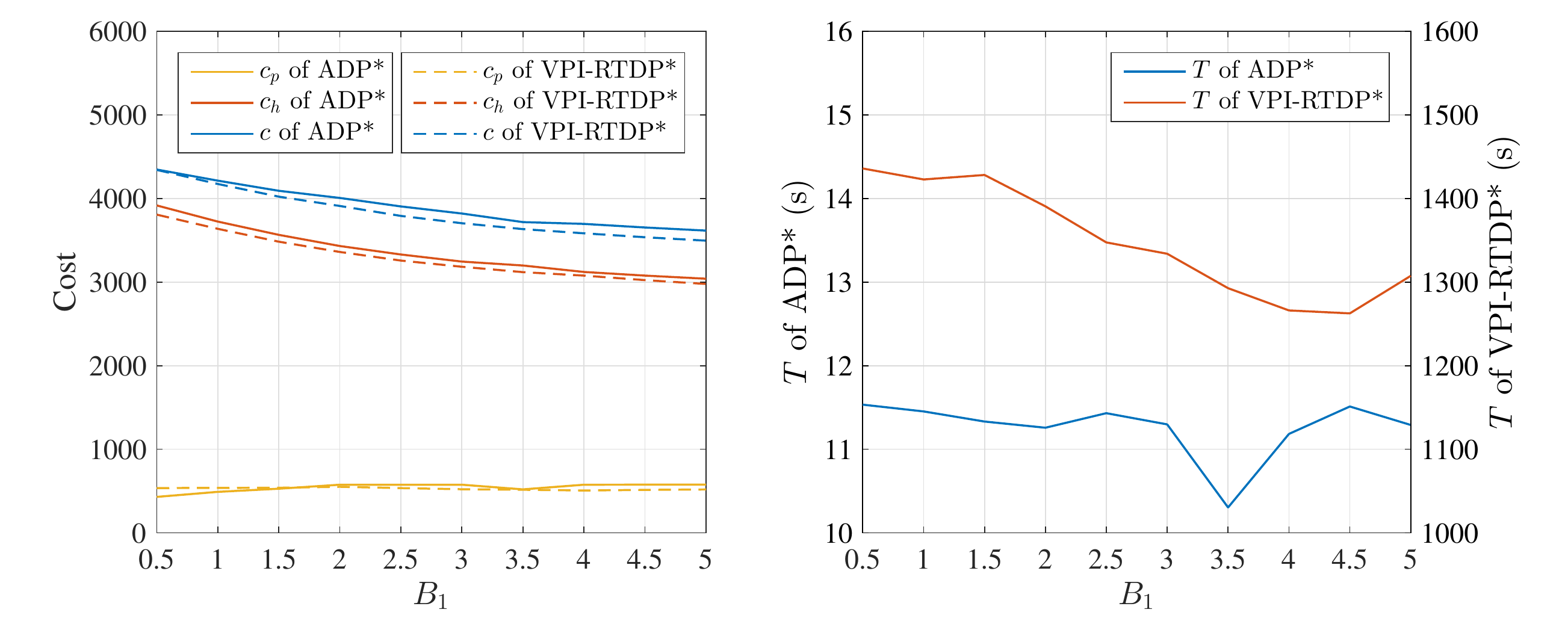}
	\vspace{-0.5cm}
	\caption{Sensitivity analysis for the OR capacity $B_1$ of specialty 1}\label{FigSensB1}
\end{figure} 

\begin{figure}[!htb]
	\includegraphics[width=6.5in]{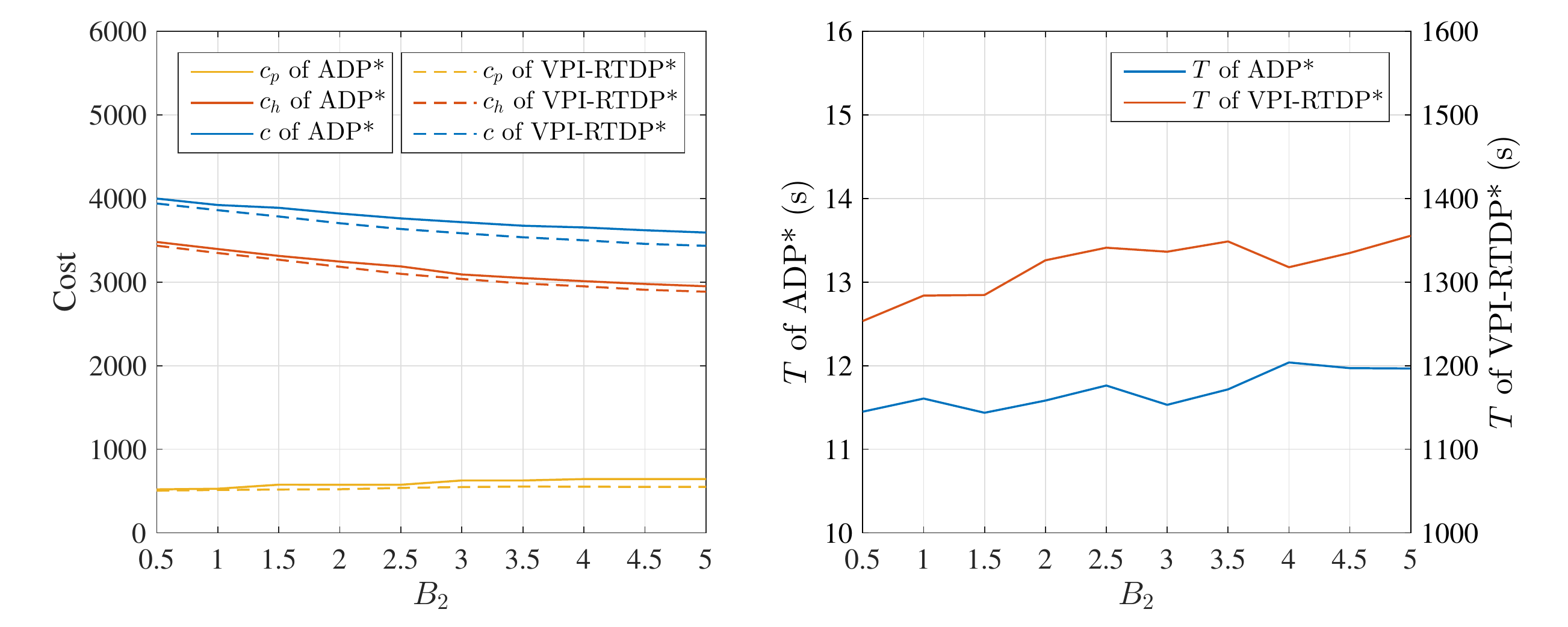}
	\vspace{-0.5cm}
	\caption{Sensitivity analysis for the OR capacity $B_2$ of specialty 2}\label{FigSensB2}
\end{figure} 

\begin{figure}[!htb]
	\includegraphics[width=6.5in]{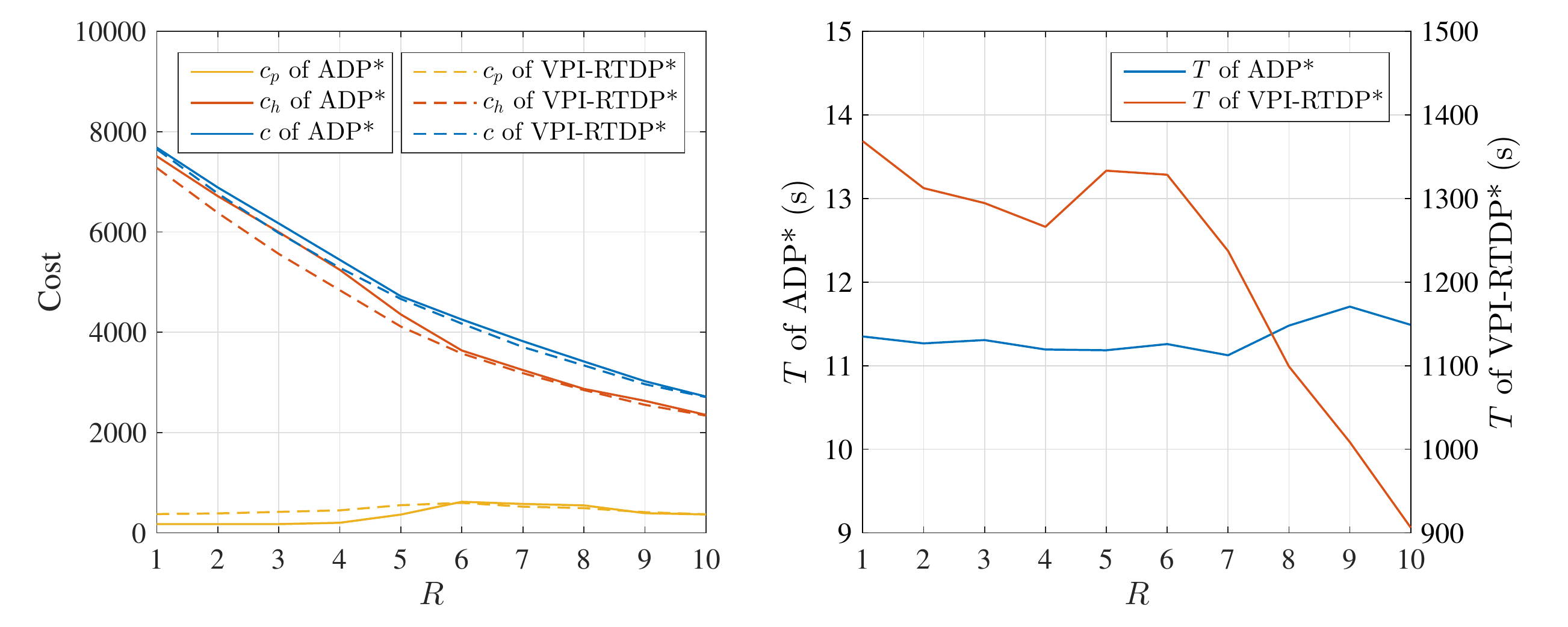}
	\vspace{-0.5cm}
	\caption{Sensitivity analysis for the SICU capacity $R$}\label{FigSensR}
\end{figure} 
}

\subsection{Sensitivity analyses of the ADP algorithm parameters}\label{Exp2}

The experimental results of the previous subsection reveal that the proposed RLS–TD($\lambda$)-based ADP algorithm {is distinctly more efficient than the conventional DP-based algorithms{. At} the same time, the gaps between the resulting costs of these algorithms are insignificant. To further evaluate} the capability of the proposed algorithms to cope with large cases, {we employ the ADP* algorithm (i.e., the RLS–TD($\lambda$)-based ADP algorithm in combination with the action searching method presented by Algorithm \ref{GreedyAction}) to solve a realistically sized single-specialty patient admission control problem, and perform sensitivity analyses for the key parameters $\lambda$ and $\beta$ of the ADP/ADP* algorithm.} 

{The problem to be solved in this subsection is {the} admission control {of} coronary artery bypass grafting (CABG) surgeries, which has previously been studied by \shortciteA{min2010elective}}. Our work differs from \shortciteA{min2010elective}, however, as it incorporates patient waiting times, dynamic patient {priority scores,} and due dates into the MDP model. The problem settings are based on the configurations of \shortciteA{min2010elective} and the real-life data provided by \shortciteA{sobolev2008analysis}. Patients from the same specialty ($j=1$) are divided into three urgency groups, with $u=1,2,6$, $[W_{11},W_{12},W_{16}]=[12,6,2]$, $[\bar{n}_{11},\bar{n}_{12},\bar{n}_{16}]=[3,5,1]$, $\tilde{n}_{11}\leqslant9$, $\tilde{n}_{12}\leqslant13$ and $\tilde{n}_{16}\leqslant5$. The size of the state space is as large as $10^{12}\times14^6\times6^2\approx2.71\times10^{20}$. Parameters of lognormal distributions for surgery durations and LOSs are $[\bar{d}_1,\sigma(d_1)=[4, 1.72]$ hours and $[\bar{l}_1,\sigma(l_1)]=[2, 2]$ days, respectively. Costs are determined as $[{c_b},{c_d},c_o,{c_e}]=[100,150,1500,1500]$. Available surgical resources are $B_1=40$ hours and $R=25$ bed-days, with estimated availability rates $[\rho_1,\rho_2]=[0.9,0.72]$.

Simulations for 1,000 consecutive weeks are performed to address the CABG {surgery admission control} problem. As a result of the increasing problem scale, DP-based algorithms are no longer capable of {computing} the optimal policy within an acceptable CPU time. Therefore, current practice is to compute a myopic policy in which each action only minimizes the cost of the present week. This policy serves as the benchmark for the resulting ADP* policy and can easily be obtained by solving the MDP {model} with discount factor $\gamma=0$. { Since the action spaces of realistic problems are considerably large, we employ Algorithm \ref{GreedyAction} to reduce the number of actions to be evaluated.} The simulation results for the myopic policy, as well as the resulting ADP* policies with $N=1000$ and $\epsilon=0.001$, are listed in {Table \ref{tabCABG}}. 

\begin{table}[!htb]
\centering

\footnotesize
\caption{Experimental results of the CABG problem}\label{tabCABG}
\begin{adjustbox}{center}
\begin{threeparttable}
	\addtolength\tabcolsep{-0.25em}
	\begin{tabular}{cccccccccccccccrrc}
		\hline
		$\gamma$ & $\lambda$ & $\beta$ & \multicolumn{1}{c}{$\bar{\omega}_{11}$} & $\sigma(\omega_{11})$ & $\bar{\omega}_{12}$ & $\sigma(\omega_{12})$ & $\bar{\omega}_{16}$ & $\sigma(\omega_{16})$ & $\bar{o}$ & $\sigma(o)$ & $\bar{e}$ & $\sigma(e)$ & $\bar{c}$ & $\sigma(c)$ & \multicolumn{1}{c}{$\bar{t}$} & \multicolumn{1}{c}{$\sigma(t)$} & $\frac{\|A^*\|}{\|A\|}$ (\textperthousand)\\
		\hline
		\vspace{0.2cm}
		0.00 & --- & --- & 4.855 & 2.362 & 2.493 & 1.168 & 1.159 & 0.366 & 1.658 & 2.583 & 1.697 & 2.808 & 19008 & 12651 & 0.011 & 0.104 & $<$0.001 \\
		\vspace{-0.08cm}
		0.99 & 0.0 & $10^{-3}$ & 4.387 & 2.048 & 2.203 & 0.942 & 1.030 & 0.171 & 2.028 & 3.565 & 1.877 & 2.996 & 17008 & 12138 & 62.762  & 152.343 & 0.012 \\
		\vspace{-0.08cm}
		&  & 1 & 4.197 & 1.927 & 2.111 & 0.862 & 1.008 & 0.089 & 2.183 & 3.916 & 1.947 & 3.249 & 16416 & 12813 & 76.698  & 349.497 & 0.929 \\
		\vspace{0.12cm}
		&  & $10^3$ & 3.160 & 1.314 & 1.565 & 0.496 & 1.000 & 0.000 & 2.417 & 4.744 & 1.889 & 3.265 & 12382 & 12177 & 55.780  & 349.204 & 4.936 \\
		\vspace{-0.08cm}
		& 0.5 & $10^{-3}$ & 4.773 & 2.323 & 2.456 & 1.147 & 1.150 & 0.357 & 1.620 & 2.647 & 1.461 & 2.532 & 18185 & 12754 & 68.964  & 157.676 & 0.002 \\
		\vspace{-0.08cm}
		&  & 1 & 4.150 & 1.881 & 2.089 & 0.840 & 1.000 & 0.000 & 2.140 & 4.147 & 1.844 & 3.164 & 15957 & 13295 & 79.743  & 219.785 & 1.299 \\
		\vspace{0.12cm}
		&  & $10^{3}$ & 3.206 & 1.571 & 1.585 & 0.832 & 1.000 & 0.000 & 2.295 & 4.122 & 1.952 & 3.063 & 12468 & 12605 & 78.136 & 165.882 & 1.577 \\
		\vspace{-0.08cm}
		& 1.0 & $10^{-3}$ & 2.035 & 0.857 & 1.001 & 0.011 & 1.000 & 0.000 & 3.195 & 5.697 & 2.258 & 3.775 & 11032 & 13272 & 95.347  & 293.532 & 1.753\\
		\vspace{-0.08cm}
		&  & 1 & 2.031 & 0.841 & 1.000 & 0.000 & 1.000 & 0.000 & 2.965 & 5.522 & 2.241 & 3.666 & 10662 & 13080 & 134.539 & 186.913 & 3.705 \\
		&  & $10^{3}$ & 2.052 & 0.879 & 1.003 & 0.053 & 1.000 & 0.000 & 3.151 & 5.561 & 2.413 & 4.200 & 11199 & 13730 & 94.664  & 198.483 & 4.194\\
		\hline
	\end{tabular}
\end{threeparttable}
\end{adjustbox}
\end{table}

{From the data presented in {Table \ref{tabCABG}}, we can see that all the simulations can be finished within a reasonable CPU time, and that computing the myopic policy ($\gamma=0.00$) is much easier than computing the ADP* policy ($\gamma=0.99$); that is, the CPU time consumed by the former ($\bar{t}=0.011$ms) is much less than that of the latter ($\bar{t}\in(62,135)$ms). However, the ADP* policy significantly outperforms the myopic policy in terms of reducing the total costs and the patients' waiting times. In comparison to the ADP* policy, which minimizes the expected total costs over an infinite horizon, the myopic policy optimizes the cost of each week separately, without considering the inter-week correlations. As {a result}, it generally schedules fewer patients than does the ADP* policy and thereby leads to longer waiting times. To be specific, the average waiting times of patients {with urgency coefficients} 1, 2 and 6 under the ADP* policy are lower than those under the myopic policy by $31.4\pm21.4\%$, $33.1\pm21.7\%$ and $11.9\pm4.0\%$, respectively. As the ADP* policy tends to schedule more patients than does the myopic policy, it increases the over-utilizations of the ORs and SICU by $0.79\pm0.51$ hours per week and $0.29\pm0.27$ patient-days per week, respectively. However, the total cost of the ADP* policy is $26.8\pm14.6\%$ lower than that of the myopic policy. {Table} \ref{tabCABG} also illustrates the effects of the values of $\lambda$ and $\beta$ on the resulting policy. It can be observed that the patients' waiting times and the total cost are decreasing in $\lambda$ and $\beta$; however, the over-utilizations of surgical resources are increasing in these parameters. 
	
We record the sizes of $A(s)$ and $A^*(s)$ for each visited state in the simulations for the CABG surgery admission control problem. We find that the sizes of $A(s)$ of some states can be as large as $1.59\times10^{11}$, while the sizes of $A^*(s)$ determined by Algorithm \ref{GreedyAction} are no larger than 75. {Table \ref{tabCABG}} also shows that the ratio of $\|A^*\|$ to $\|A\|$ in each simulation is {lower} than 5\textperthousand, which is significantly lower than the values of $\|A^*\|/\|A\|$ for the small-sized test problem solved in Section \ref{Exp1}. This fact implies that Algorithm \ref{GreedyAction} leads to {greater} improvements in computational efficiency for larger problems.}

To {further} explore how the scheduling policy can be influenced by the key parameters of the ADP* algorithm, we analyze the sensitivity of several performance measures to the variances of $\lambda$ and $\beta$. Figure \ref{FigSen} demonstrates the experimental results for the CABG problem; it indicates that $\lambda$ varies from 0.0 to 1.0 and $\beta$ varies from $10^{-5}$ to $10^5$. It can be observed that, when $\lambda$ exceeds 0.8, the patients' waiting times and the total cost sharply decrease, regardless of the value of $\beta$; meanwhile, {the over-utilizations of the ORs and SICU increase significantly}. When $\lambda\leqslant0.8$, the larger value of $\beta$ results in lower cost and shorter waiting times for patients{. However}, it also leads to {greater} overuse of the ORs and SICU. More importantly, no matter how the values of $\lambda$ and $\beta$ vary, the average cost per week $\bar{c}$ of the ADP* policy is always { significantly} lower than that of the myopic policy (refer to {Table \ref{tabCABG}}). 

\begin{figure}[!htb]
	\subfigure
	{
		\label{wt1}
		{\includegraphics[width=2.3in]{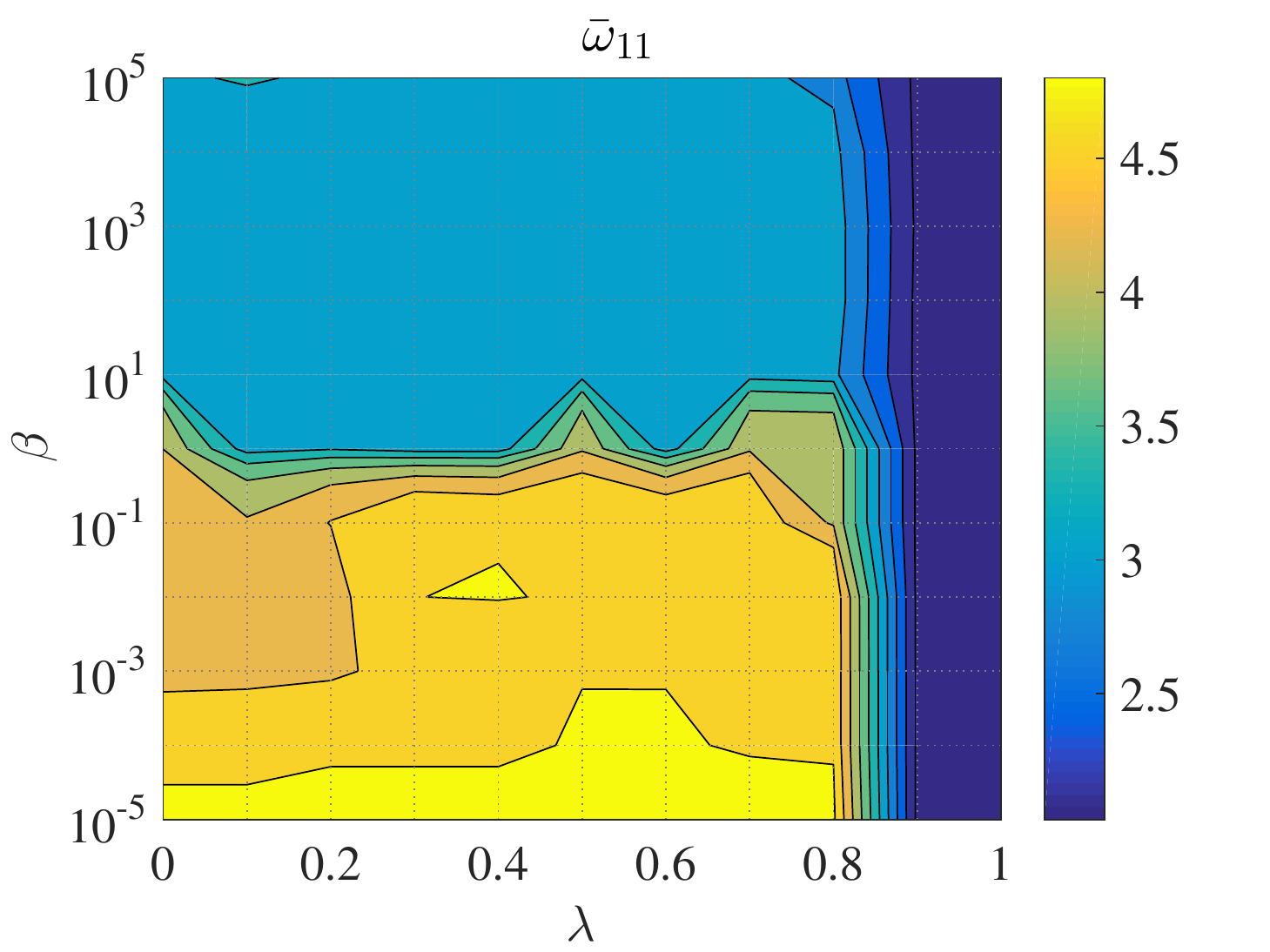}}
	}
	\hskip-0.5cm
	\subfigure
	{
		\label{wt2}
		{\includegraphics[width=2.3in]{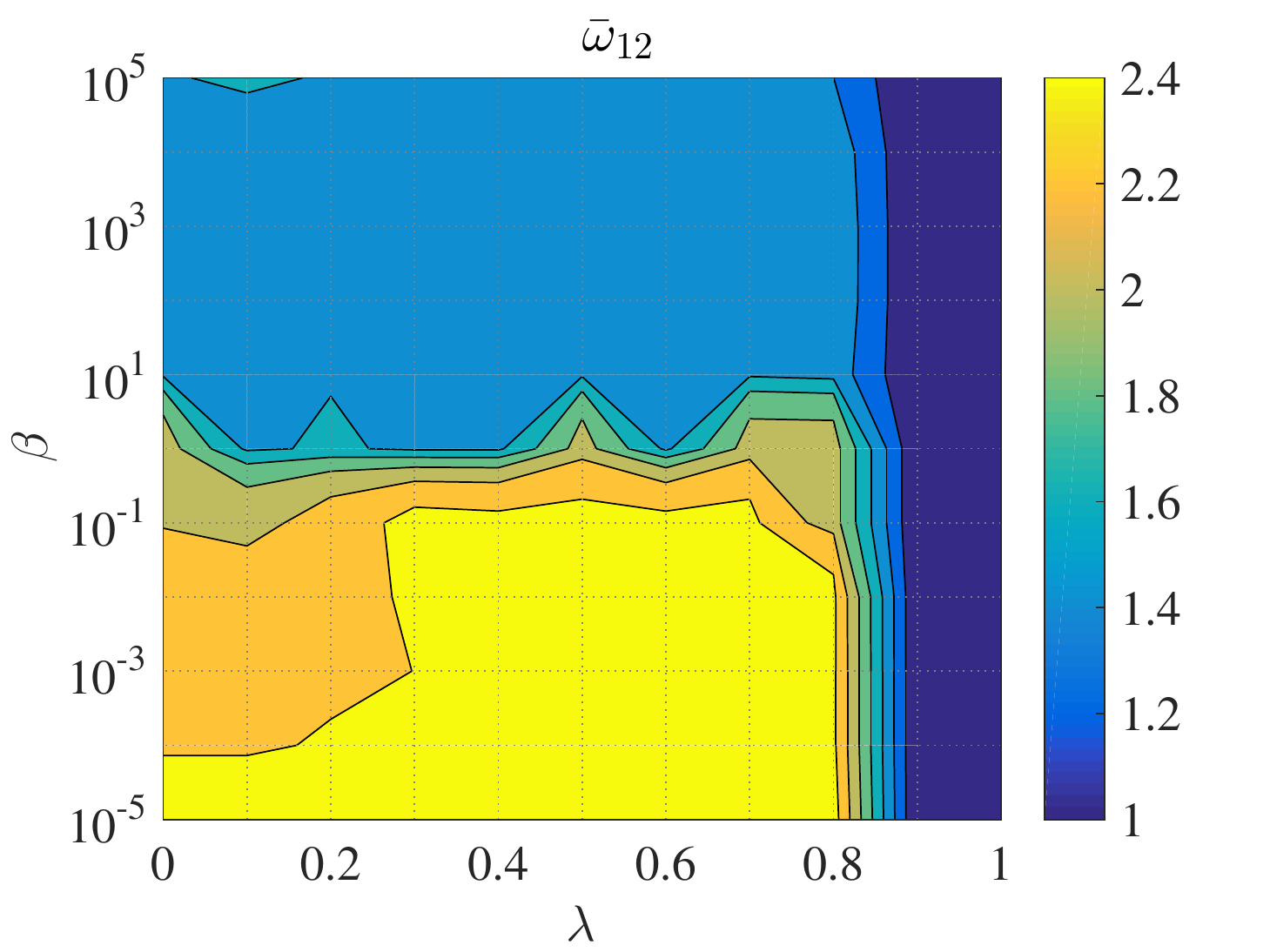}}
	}
 	\hskip-0.5cm
	\subfigure
	{
		\label{wt3}
		{\includegraphics[width=2.3in]{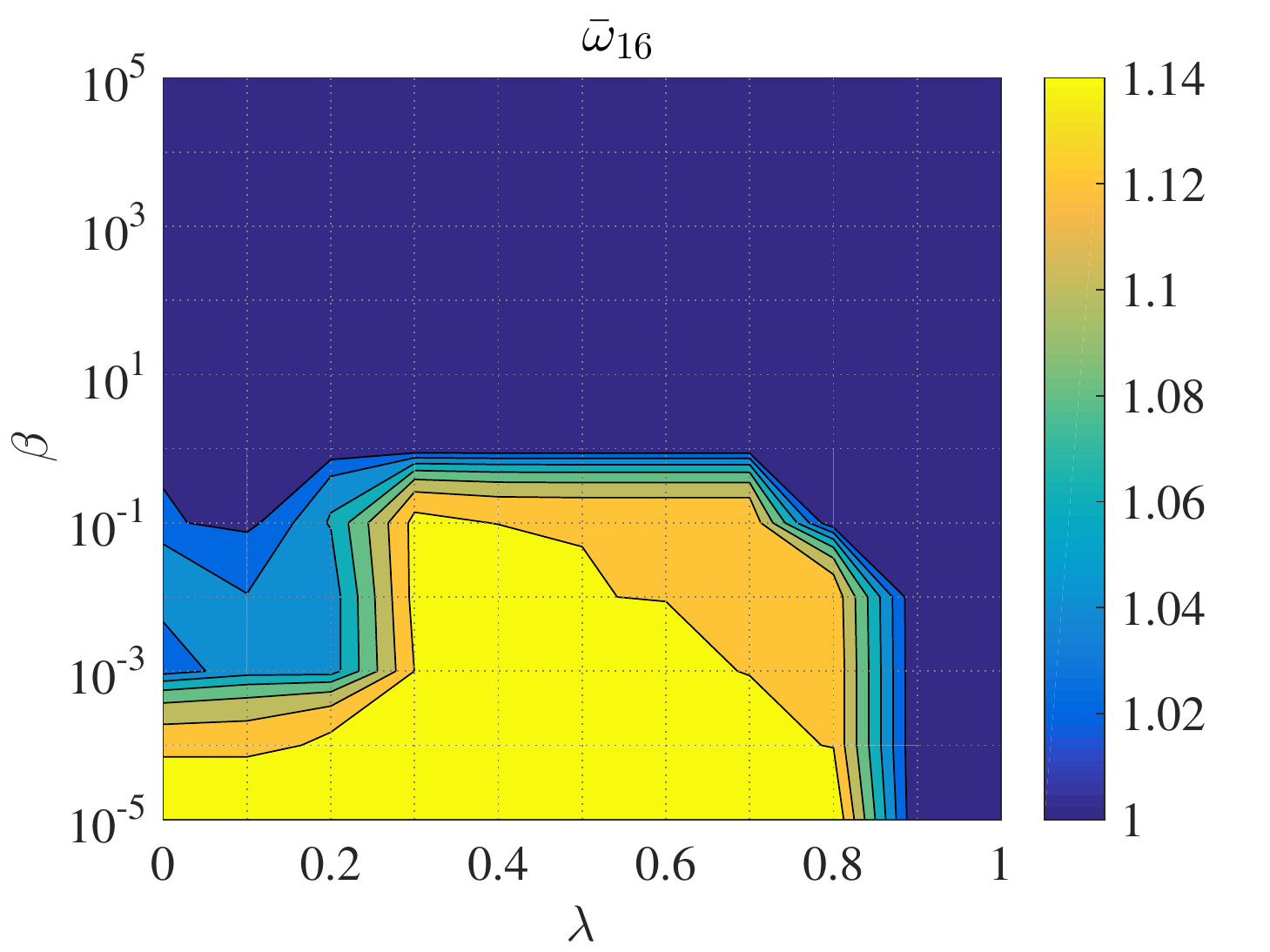}}
	}\\
	\subfigure
	{
		\label{o1}
		{\includegraphics[width=2.3in]{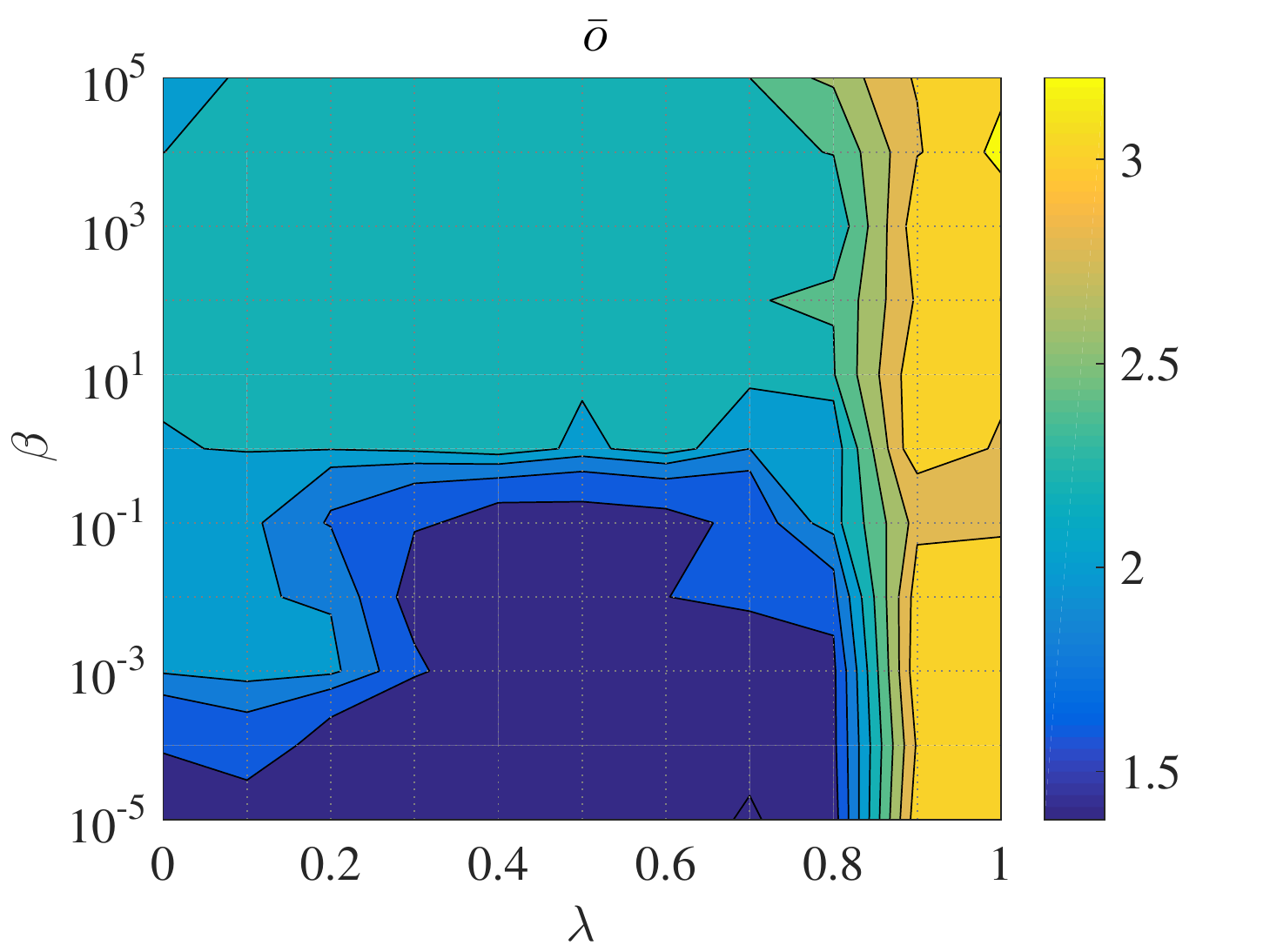}}
	}
	\hskip-0.5cm
	\subfigure
	{
		\label{o2}
		{\includegraphics[width=2.3in]{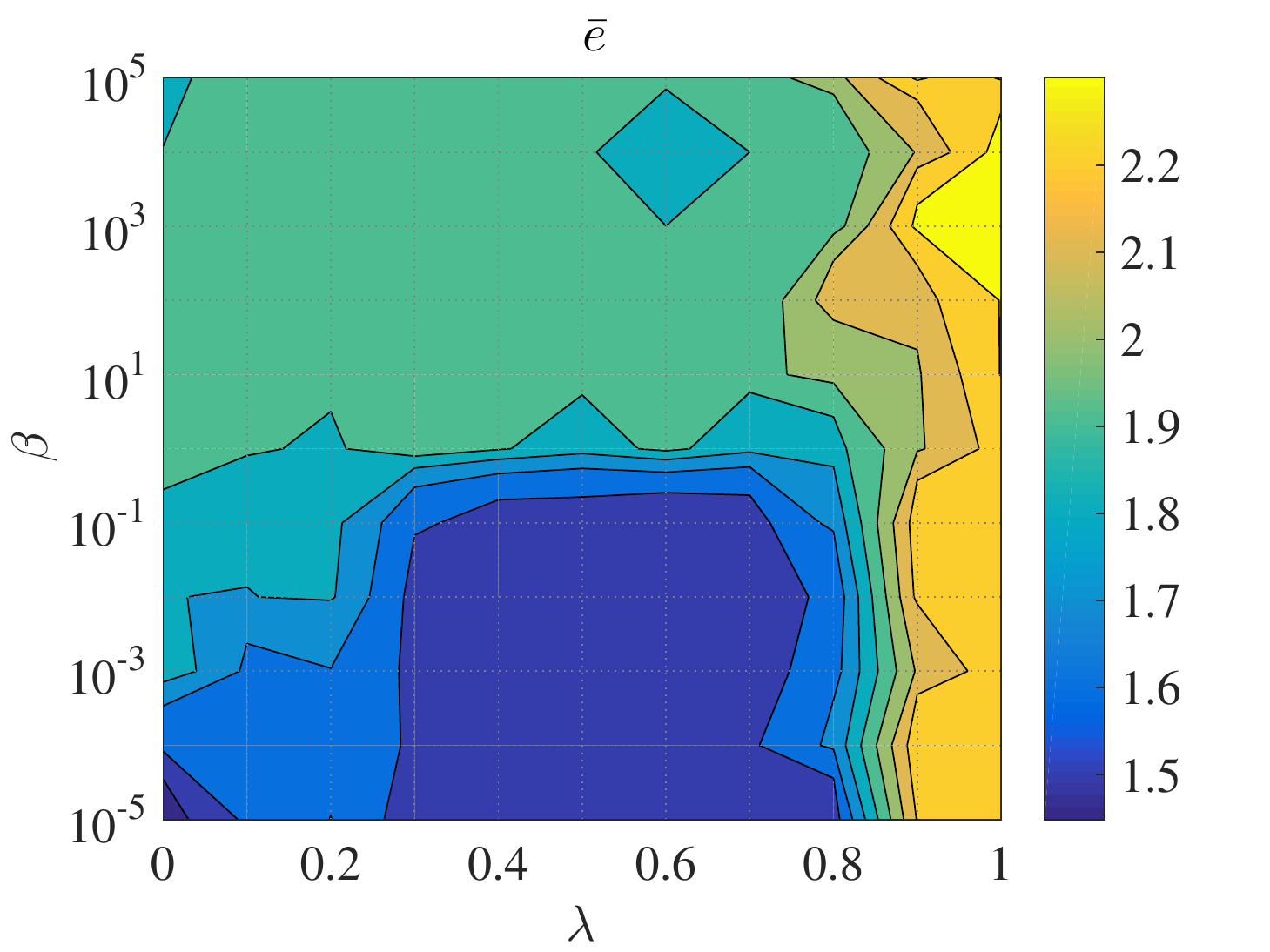}}
	}
	\hskip-0.5cm
	\subfigure
	{
		\label{c}
		{\includegraphics[width=2.3in]{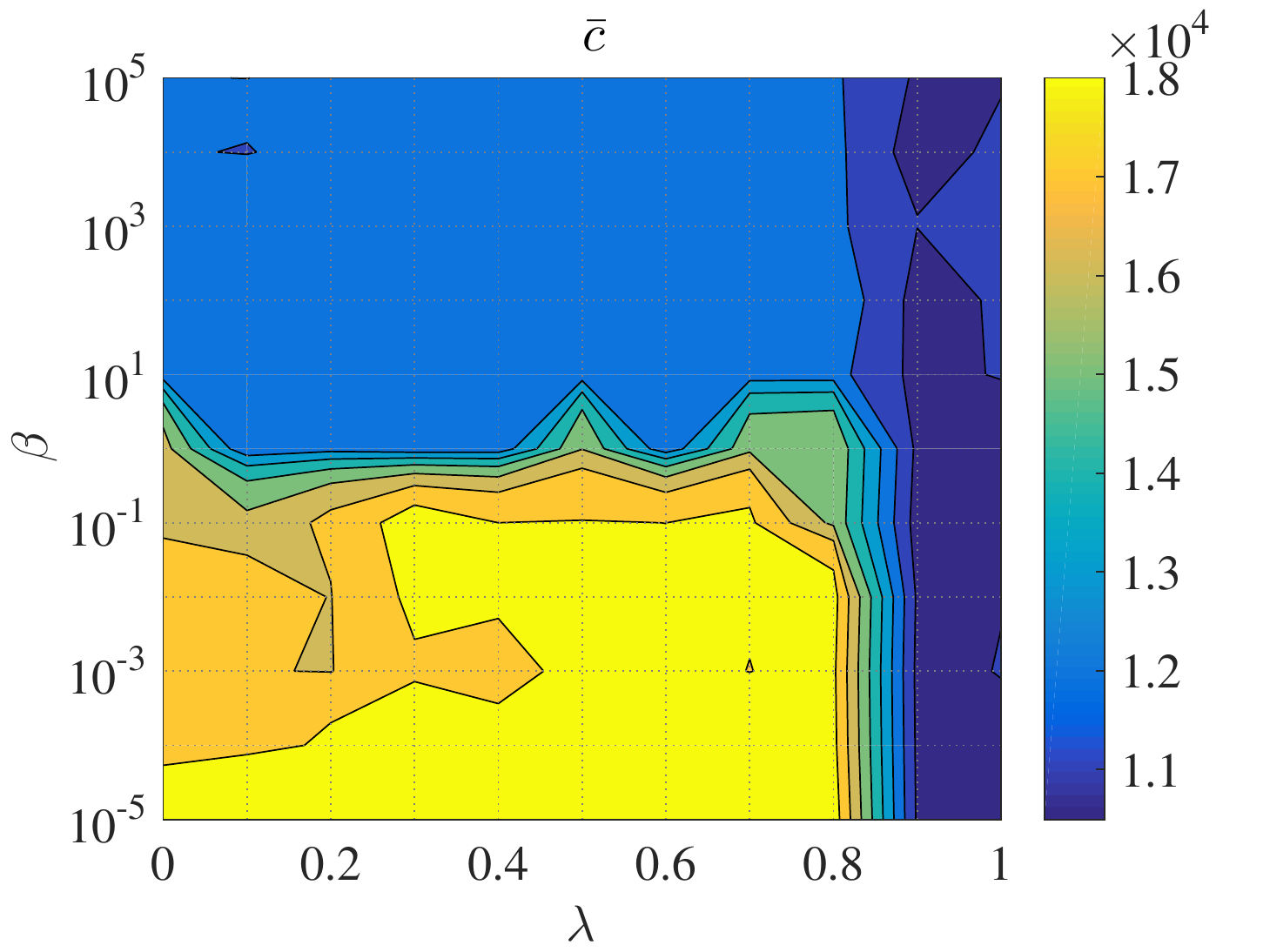}}
	}
	\caption{Experimental results for the CABG test problem with {variations in $\lambda$ and $\beta$}}\label{FigSen}
\end{figure}

Figure \ref{FigTheta} compares the evolutions of the parameter vector $\Theta$ during the simulations, using different values for $\lambda$ and $\beta$. The length of vector $\Theta$ for the CABG problem is $\Xi={20}$; we arbitrarily select the values of $\theta_1$ and $\theta_{11}$ to be demonstrated in Figure \ref{FigTheta}. The first row of Figure \ref{FigTheta} shows that the smaller value of $\beta$ leads to a lower convergence rate, and that the values of $\theta_1$ and $\theta_{11}$ converge especially slowly when $\beta=10^{-5}$. The second row of Figure \ref{FigTheta} reveals that {the values to which parameters $\theta_\xi$ converge} are increasing in line with the value of $\lambda$, which explains why different values of $\lambda$ result in different policies ({refer to} {Table \ref{tabCABG}} and Figure \ref{FigSen}). In terms of computational efficiency, Figure \ref{FigSubCPULambda} shows that the CPU time goes up as $\lambda$ increases from 0.0 to 0.8, which implies that larger values of $\lambda$ require more CPU time before the convergence of $\Theta$ occurs. When $0.8\leqslant\lambda\leqslant1.0$, the resulting policy tends to schedule more patients in each decision; we can see in Figure \ref{FigSen} that patients' waiting times are obviously shorter, hence there are {fewer} patients on the waiting list and the action spaces of the involved states are relatively small. {Consequently}, the CPU time significantly drops when $\lambda$ is larger than 0.8. In addition, Figure \ref{FigSubCPUBeta} shows the influence of $\beta$ on computational efficiency. {As illustrated in Figure \ref{FigSen}, the resulting policies for $10^{-5}\leqslant\beta<1$ and $1\leqslant\beta\leqslant10^5$ are significantly different. Accordingly, the tendencies of $\bar{t}$ in the two intervals are also different: $\bar{t}$ is increasing in $\beta$ for $\beta<1$ and decreasing in $\beta$ for $\beta\geqslant1$.}

\begin{figure}[!htb]
	\centering
	\subfigure
	{\includegraphics[width=2.5in]{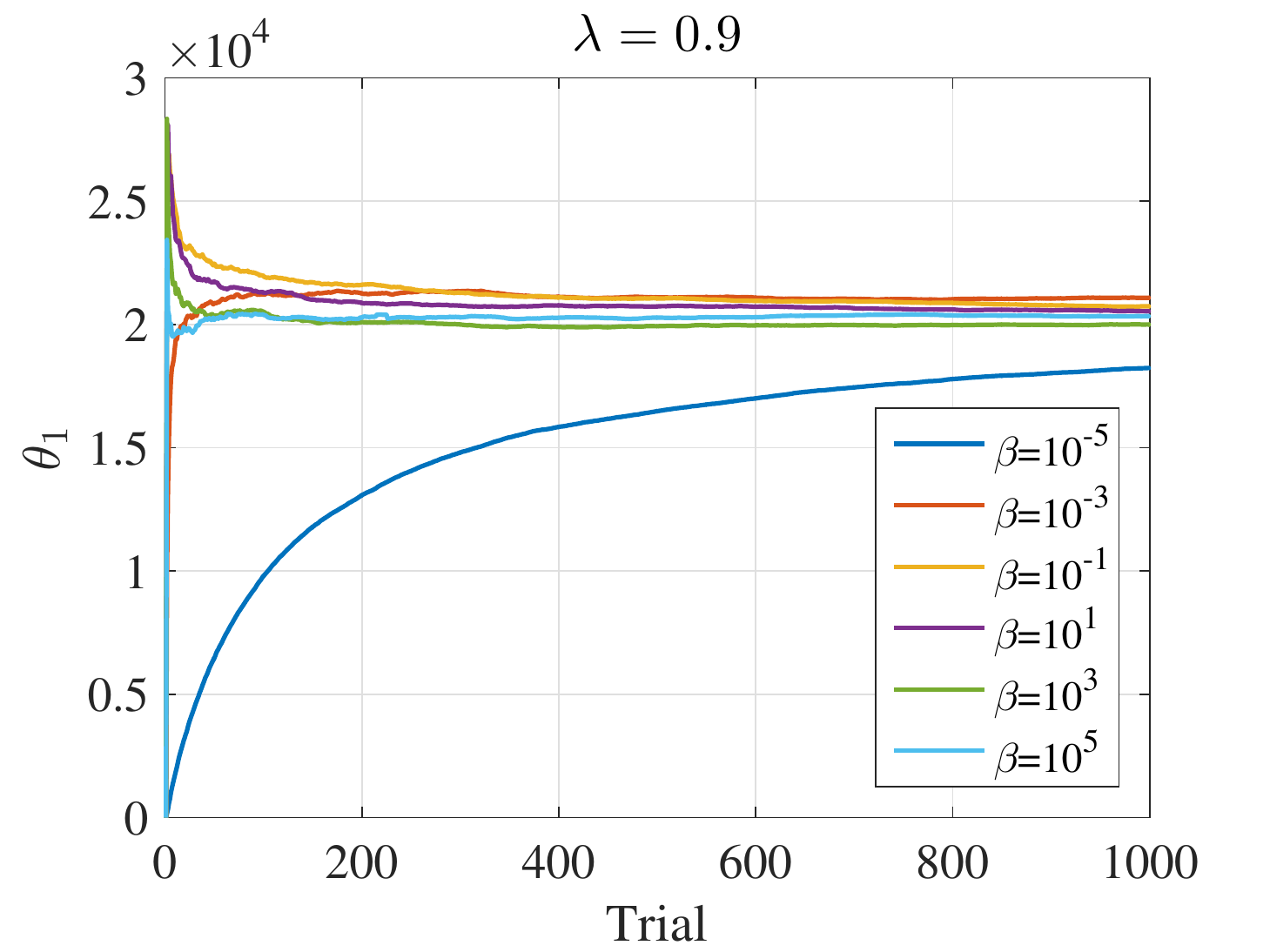}}
	\subfigure
	{\includegraphics[width=2.5in]{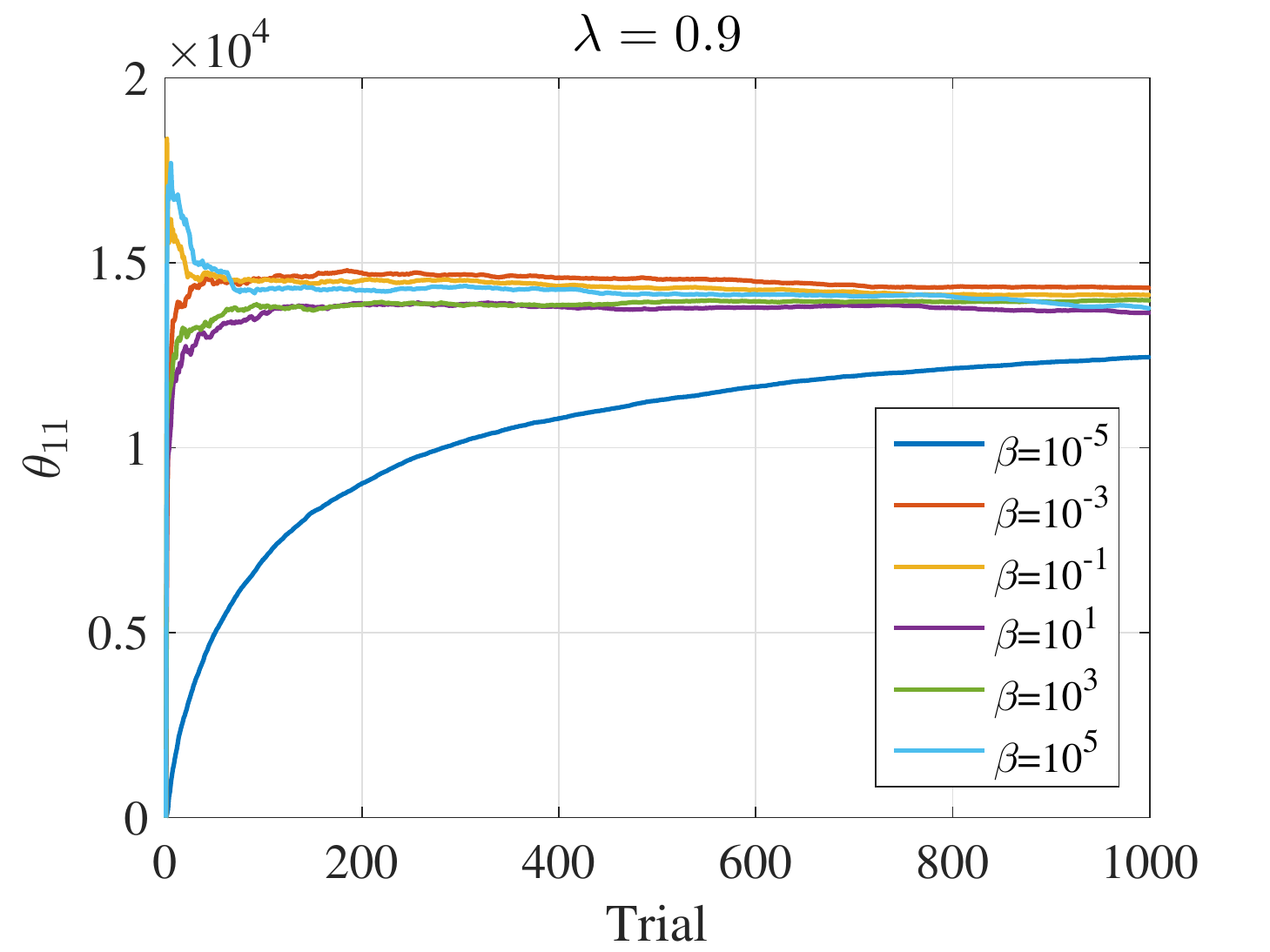}}
	\subfigure
	{\includegraphics[width=2.5in]{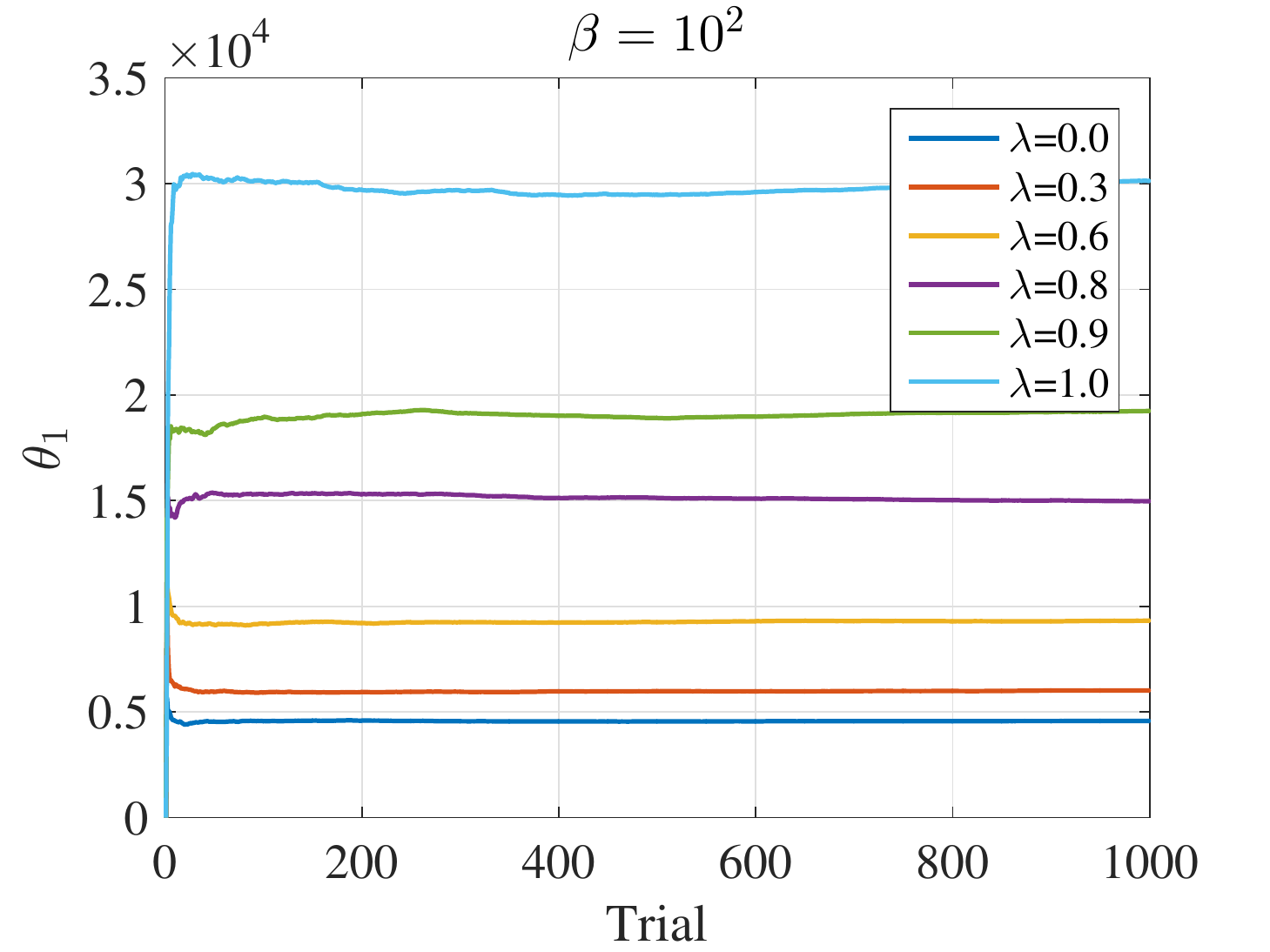}}
	\subfigure
	{\includegraphics[width=2.5in]{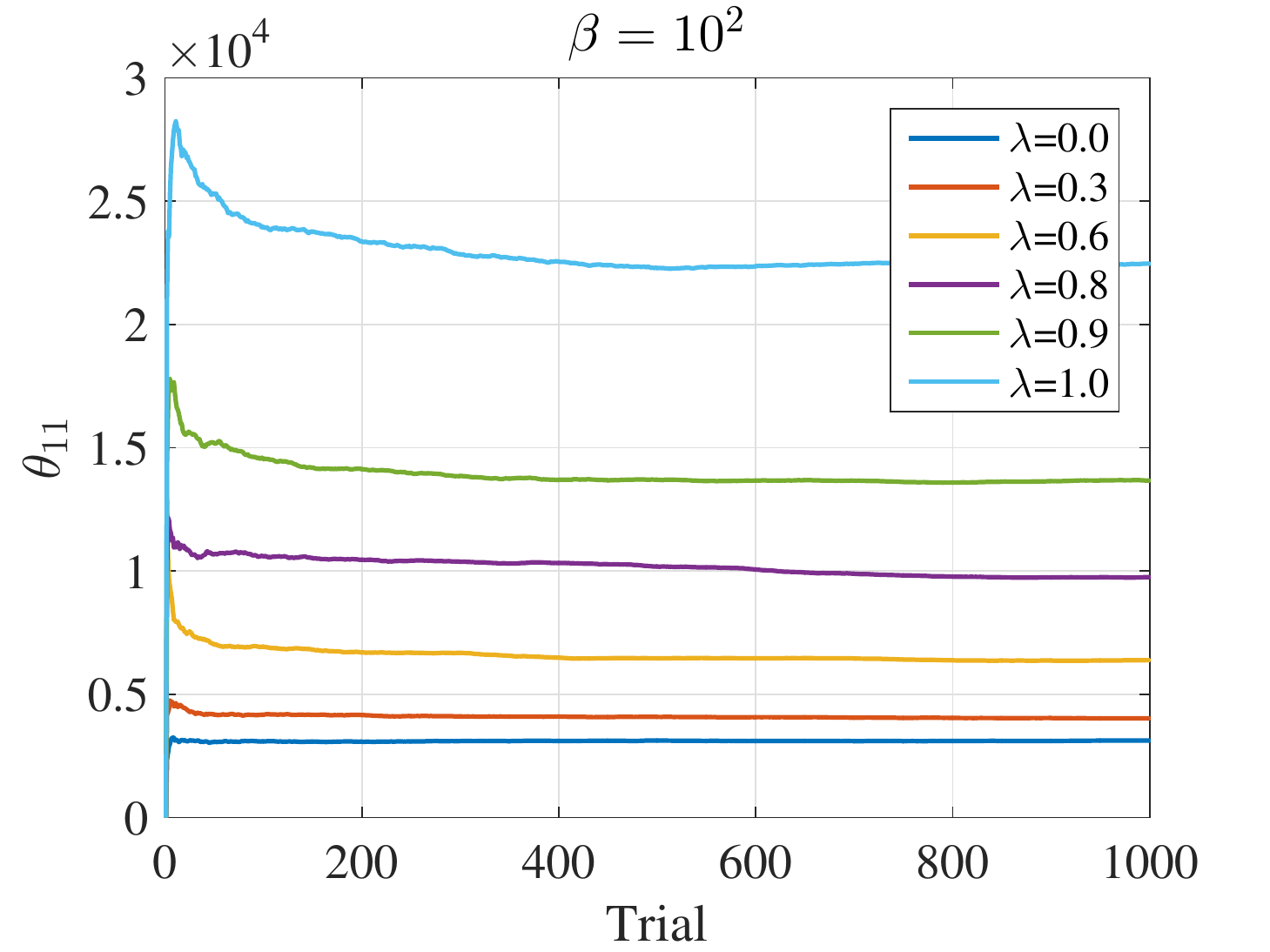}}
	\caption{Evolutions of $\Theta$ {when} solving the CABG problem with different values of $\lambda$ and $\beta$}\label{FigTheta}
\end{figure}
\begin{figure}[!htbp]
	\centering
	\subfigure[]
	{\label{FigSubCPULambda}\includegraphics[width=2.5in]{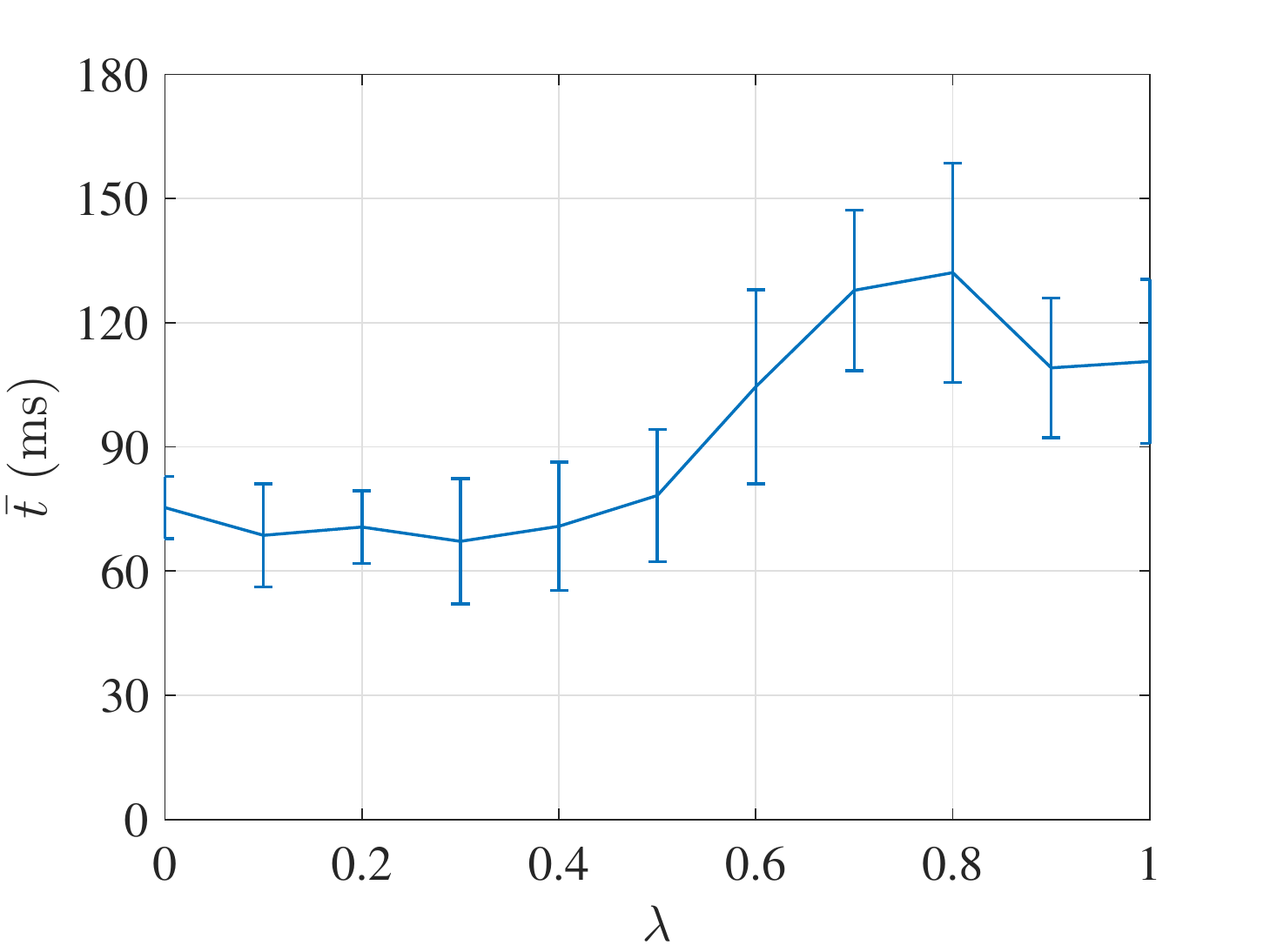}}
	\subfigure[]
	{\label{FigSubCPUBeta}\includegraphics[width=2.5in]{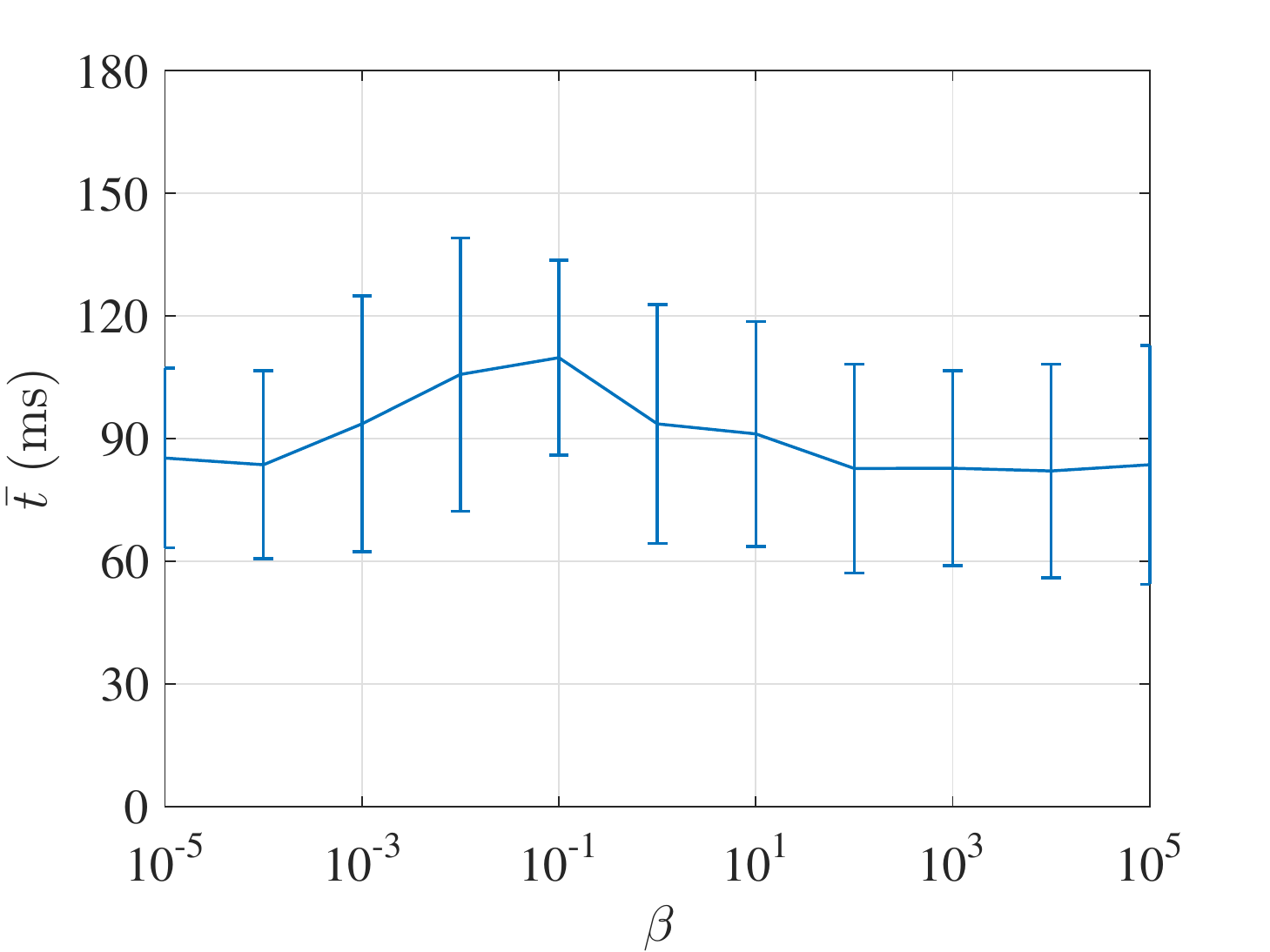}}
	\caption{Comparison of CPU time consumed by {the} ADP* algorithm {when used} with different values of $\lambda$ and $\beta$}\label{FigCPU}
\end{figure}

\subsection{Experimental results of a {realistically sized} multi-specialty patient admission control problem}\label{Exp3}
{In order to validate the proposed algorithms' capability to solve realistically sized problems, we consider a realistic multi-specialty patient admission control problem that is derived from \shortciteA{min2010scheduling} and \shortciteA{neyshabouri2017two} (with some reasonable modifications). The configuration of specialties is presented in Table \ref{TabConfMulti}.} {The regular OR capacity $B_j$ listed in the last column of Table \ref{TabConfMulti} is computed according to the MSS presented in Figure 1 of \shortciteA{min2010scheduling}. The MSS specifies the assignments of 32 surgical blocks to 9 specialties over a one-week period. Then, given that the regular time length of each surgical block is 8 hours, the values of $B_j$ can be computed. The} unit costs are $[{c_b},{c_d},c_o,{c_e}]=[50,200,1000,1000]$ and {the} regular SICU capacity is 105 bed-days per week (15 recovery beds). {The} estimated availability rates for ORs and recovery beds are $\rho_1=\rho_2=0.6$. According to Table \ref{TabConfMulti}, we can calculate that the size of the state space is as large as $2.14\times10^{176}$. Given that this multi-specialty problem is on a much larger scale than the {previously solved} test problems, the length of the simulation is shortened to 100 weeks. ADP* is employed as the solution technique and a myopic policy is computed as the benchmark. The parameters of the ADP* algorithm are {arbitrarily set as $\lambda=0.5$, $\beta=1$}, $N=25$, and $\epsilon=0.01$. 

\begin{table}[!htb]
	\centering
	\small
	\caption{Problem configuration {for} the multi-specialty test problem}\label{TabConfMulti}
	\begin{threeparttable}
	\begin{tabular}{cccrrrrccccr}
		\hline
		Specialty & $j$ & $v_j$ & $u$ & \multicolumn{1}{c}{$W_{ju}$} & \multicolumn{1}{c}{$\bar{n}_{ju}$} & \multicolumn{1}{c}{$\max{\tilde{n}_{ju}}$} & \multicolumn{1}{c}{$\bar{d}_j$\tnote{*}} & $\sigma(d_j)\tnote{*}$ & $\bar{l}_j$\tnote{**} & $\sigma(l_j)$\tnote{**} & \multicolumn{1}{c}{$B_j$\tnote{*}} \\
		\hline
		\vspace{0.12cm}
		ENT & 1 & 1 & 1 & 20 & 10.00 & 25 & 1.23 & 0.38 & 0.10 & 0.10 & 48.00 \\
		\vspace{-0.08cm}
		OBGYN & 2 & 2 & 1 & 15 & 4.00 & 15 & 1.43 & 0.44 & 2.00 & 2.00 & 24.00 \\
		\vspace{0.12cm}
		& &  & 3 & 6 & 0.50 & 4 &&&&& \\
		\vspace{-0.08cm}
		ORTHO & 3 & 2 & 1 & 15 & 10.00 & 25 & 1.78 & 0.54 & 1.50 & 1.50 & 48.00 \\
		\vspace{0.12cm}
		& &  & 3 & 6 & 2.00 & 10 &&&&& \\
		\vspace{0.12cm}
		NEURO & 4 & 5 & 1 & 8 & 2.50 & 12 & 2.67 & 1.65 & 2.00 & 2.00 & 8.00 \\
		\vspace{-0.08cm}
		GEN & 5 & 1 & 1 & 20 & 9.00 & 20 & 1.55 & 0.67 & 0.05 & 0.05 & 64.00 \\
		\vspace{0.12cm}
		& &  & 2 & 15 & 2.00 & 10 &&&&& \\
		\vspace{0.12cm}
		OPHTH & 6 & 2 & 1 & 15 & 1.50 & 8 & 0.63 & 0.10 & 0.05 & 0.05 & 32.00 \\
		\vspace{-0.08cm}
		VASCULAR & 7 & 4 & 1 & 10 & 1.00 & 6 & 2.00 & 1.03 & 3.50 & 3.50 & 16.00 \\
		\vspace{-0.08cm}
		& && 2 & 5 & 2.50 & 12 &&&&& \\
		\vspace{0.12cm}
		& && 4 & 2 & 0.50 & 4 &&&&& \\
		\vspace{-0.08cm}
		CARDIAC & 8 & 5 & 1 & 8 & 0.25 & 3 & 4.00 & 2.95 & 2.00 & 2.00 & 8.00 \\
		\vspace{-0.08cm}
		& && 2 & 3 & 1.25 & 7 &&&&& \\
		\vspace{0.12cm}
		& && 6 & 1 & 0.50 & 4 &&&&& \\
		\vspace{-0.08cm}
		UROLOGY & 9 & 3 & 1 & 12 & 2.00 & 10 & 1.07 & 0.75 & 0.80 & 0.80 & 8.00 \\
		& &  & 2 & 6 & 0.50 & 4 &&&&& \\
		\hline
	\end{tabular}
	\begin{tablenotes}
		\item * unit: hours; ** unit: days
	\end{tablenotes}
	\end{threeparttable}
\end{table}
{Tables \ref{tabMultiWT} and \ref{tabMultiTot}} present the experimental results for the multi-specialty problem in detail. {Table \ref{tabMultiWT}} shows that, in comparison with the myopic policy ($\gamma=0.00$), {the ADP* policy ($\gamma=0.99$)} significantly shortens patients’ waiting times, but leads to a slight increase in the overuse of ORs. In the specialties where the OR capacities are {relatively} sufficient and { patients do not spend much} time in the ORs and SICU, such as ENT and OPHTH, {patients wait for} one week at most, and there {are} few instances where ORs are overused. In comparison, in the specialties NEURO, CARDIAC and VASCULAR, where {the OR capacities are limited} and expectations of surgery duration and LOS are relatively large, many surgeries are delayed for more than one week and there is {greater} overuse of the ORs. From {Table \ref{tabMultiTot}} it can be seen that, compared to the myopic policy, ADP* reduces the {total} cost by {two thirds}, saves 9.03\% of CPU time, and increases the overuse of ORs by 1.378 hours per week on average. 

\begin{table}[!htb]
	\centering
	
	\scriptsize
	\caption{Experimental results of the multi-specialty problem: patients' waiting time and overtime of ORs}\label{tabMultiWT}
	\begin{adjustbox}{center}
	\addtolength\tabcolsep{-0.3em}
	\begin{tabular}{@{\extracolsep{3pt}}ccccccccccccccccccc}
		\hline
		\multirow{2}{*}{$\gamma$} & \multirow{2}{*}{Variable} & \multirow{2}{*}{ENT} & \multicolumn{2}{c}{OBGYN} & \multicolumn{2}{c}{ORTHO} & \multirow{2}{*}{NEURO} & \multicolumn{2}{c}{GEN} & \multirow{2}{*}{OPHTH} & \multicolumn{3}{c}{VASCULAR} & \multicolumn{3}{c}{CARDIAC} & \multicolumn{2}{c}{UROLOGY} \\\cline{4-5}\cline{6-7}\cline{9-10}\cline{12-14}\cline{15-17}\cline{18-19}
		&  &  & $u=1$ & $u=3$ & $u=1$ & $u=3$ &  & $u=1$ & $u=2$ &  & $u=1$ & $u=2$ & $u=4$ & $u=1$ & $u=2$ & $u=6$ & $u=1$ & $u=2$ \\\hline
		\vspace{-0.08cm}
		0.00 & $\bar{\omega}_{ju}$ & 1.000 & 1.740 & 1.000 & 1.427 & 1.000 & 3.900 & 1.000 & 1.000 & 1.000 & 2.700 & 1.511 & 1.000 & 6.063 & 2.955 & 1.000 & 1.218 & 1.000 \\
		\vspace{-0.08cm}
		& $\sigma(\omega_{ju})$ & 0.000 & 0.417 & 0.000 & 0.280 & 0.000 & 0.709 & 0.000 & 0.000 & 0.000 & 1.750 & 0.342 & 0.000 & 0.371 & 0.043 & 0.000 & 0.189 & 0.000 \\
		\vspace{-0.08cm}
		& $\bar{o}_j$ & 0.000	& \multicolumn{2}{c}{0.052} & \multicolumn{2}{c}{0.015} & 1.097 &	\multicolumn{2}{c}{0.086} &	0.000 &	\multicolumn{3}{c}{0.052} &	\multicolumn{3}{c}{0.369} &	 \multicolumn{2}{c}{0.000}  \\
		\vspace{0.2cm}
		& $\sigma(o_j)$ & 0.000 & \multicolumn{2}{c}{0.284} & \multicolumn{2}{c}{0.145} & 2.062 &	\multicolumn{2}{c}{0.604} &	0.000 &	\multicolumn{3}{c}{0.291} &	\multicolumn{3}{c}{0.941} &	 \multicolumn{2}{c}{0.000} \\
		\vspace{-0.08cm}
		0.99 & $\bar{\omega}_{ju}$ & 1.000 & 1.358 & 1.000 & 1.213 & 1.000 & 2.294 & 1.000 & 1.000 & 1.000 & 1.509 & 1.000 & 1.000 & 1.971 & 1.015 & 1.000 & 1.000 & 1.000 \\
		\vspace{-0.08cm}
		& $\sigma(\omega_{ju})$ & 0.000 & 0.436 & 0.000 & 0.167 & 0.000 & 1.270 & 0.000 & 0.000 & 0.000 & 0.341 & 0.000 & 0.000 & 0.885 & 0.015 & 0.000 & 0.000 & 0.000 \\
		\vspace{-0.08cm}
		& $\bar{o}_j$ & 0.000	& \multicolumn{2}{c}{0.004} & \multicolumn{2}{c}{0.216} & 1.304 &	\multicolumn{2}{c}{0.044} &	0.000 &	\multicolumn{3}{c}{0.740} &	\multicolumn{3}{c}{0.743} &	 \multicolumn{2}{c}{0.000}  \\
		& $\sigma(o_j)$ & 0.000 & \multicolumn{2}{c}{0.039} & \multicolumn{2}{c}{0.903} & 1.989 &	\multicolumn{2}{c}{0.229} &	0.000 &	\multicolumn{3}{c}{1.707} &	\multicolumn{3}{c}{7.738} &	 \multicolumn{2}{c}{0.000} \\
		\hline 
	\end{tabular}
	\end{adjustbox}

	\centering
	
	\small
	\caption{Experimental results of the multi-specialty problem: overuse of OR and SICU, cost, and CPU time}\label{tabMultiTot}
	\begin{tabular}{ccccccccr}
		\hline
		$\gamma$ & $\bar{o}$ & $\sigma(o)$ & $\bar{e}$ & $\sigma(e)$ & $\bar{c}$ & $\sigma(c)$ & $\bar{t}$ & \multicolumn{1}{c}{$\sigma(t)$} \\
		\hline	
		0.00 & 1.672 & 2.381 & 7.913 & 9.027 & 63715 & 21485 & 729,490 & 1078,424 \\
		0.99 & 3.050 & 3.271 & 8.690 & 9.518 & 21012 & 11487 & 663,610 & 736,206 \\
		\hline
	\end{tabular}
\end{table}

Figure \ref{FigMulti} compares the evolutions of the key performance measures under {the two} policies {(myopic and ADP*)}. Figure \ref{FigSubC} clearly shows that the ADP* policy leads to {lower costs over most of the weeks shown}. Moreover, since the patients' waiting times are shortened {under the ADP* policy}, the costs incurred by performing and delaying surgeries {(i.e., the patient-related costs)} are much lower and there are {significantly fewer} patients on the waiting list, as shown {in} Figures \ref{FigSubCw} and \ref{FigSubS}. In addition, we can observe from Figure \ref{FigSubT} that ADP* consumes more CPU time at the beginning of the simulation, because the parameter vector $\Theta$ is still far from {convergence}, whereas computing the myopic policy requires more CPU time at the end of the simulation, since the size of the waiting list and the number of actions to be evaluated continually increase over time.
\begin{figure}[!htb]
	\centering
	\subfigure[]{\label{FigSubC}\includegraphics[width=2.5in]{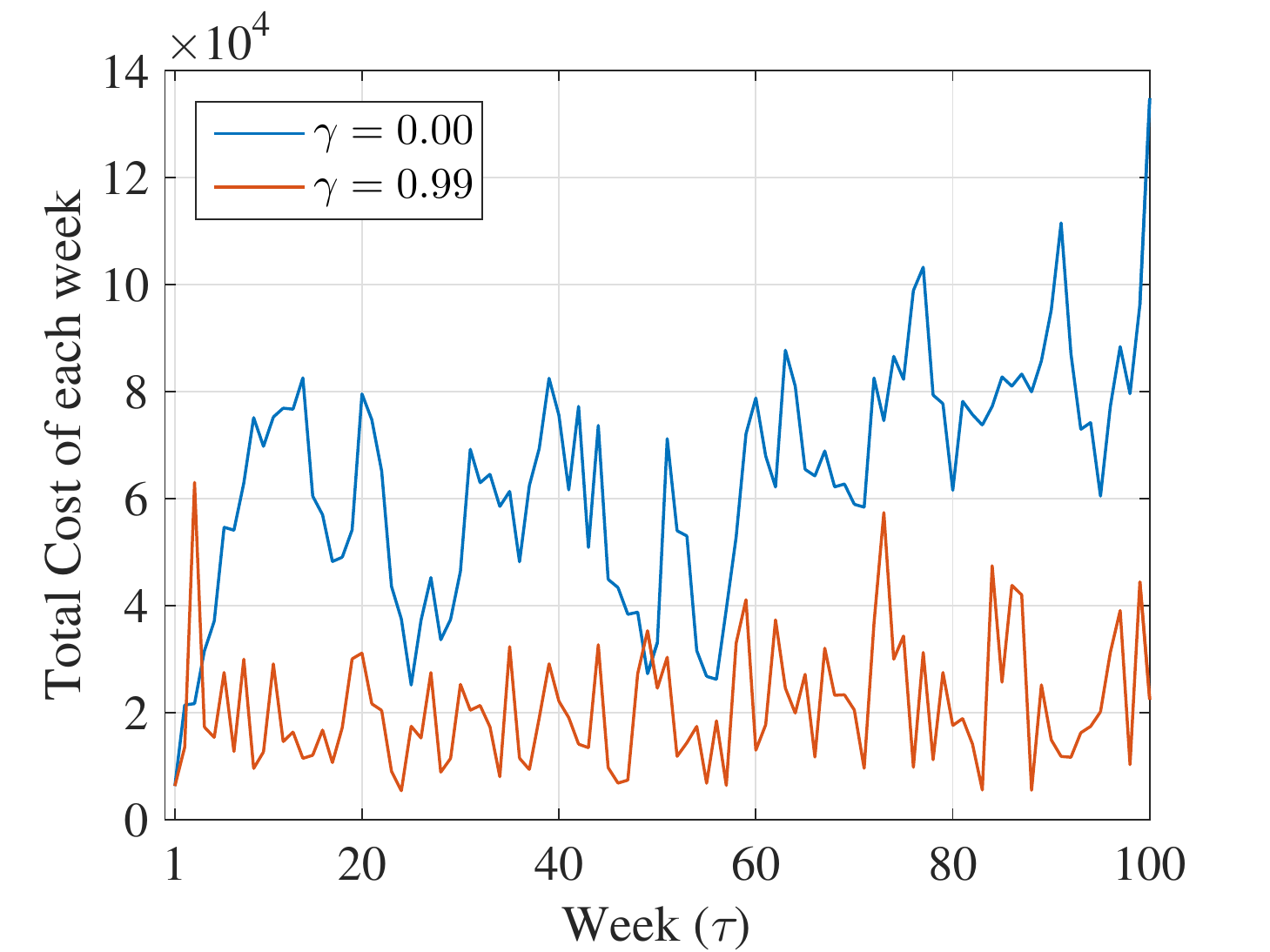}}
	\subfigure[]{\label{FigSubCw}\includegraphics[width=2.5in]{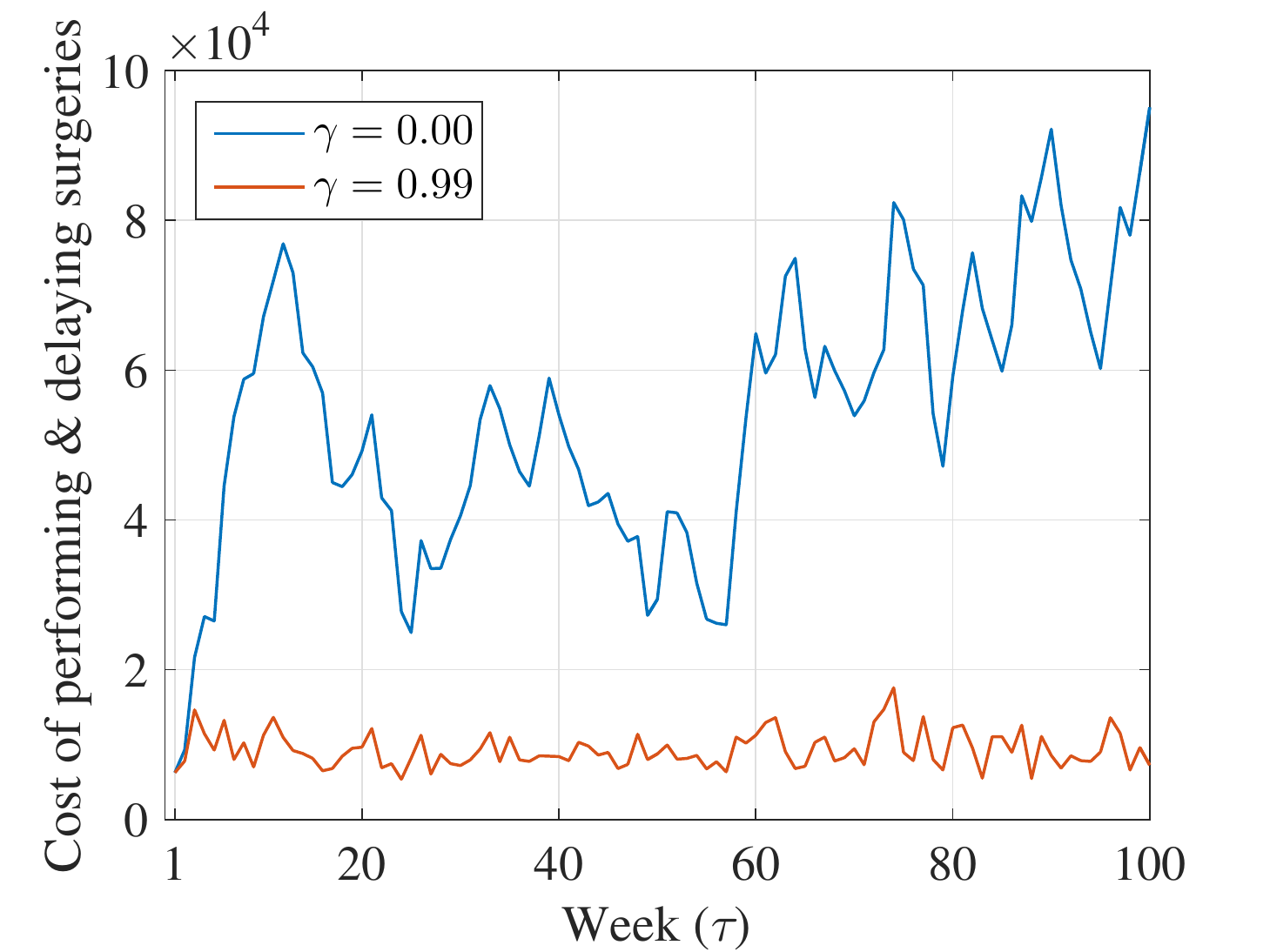}}
	\subfigure[]{\label{FigSubS}\includegraphics[width=2.5in]{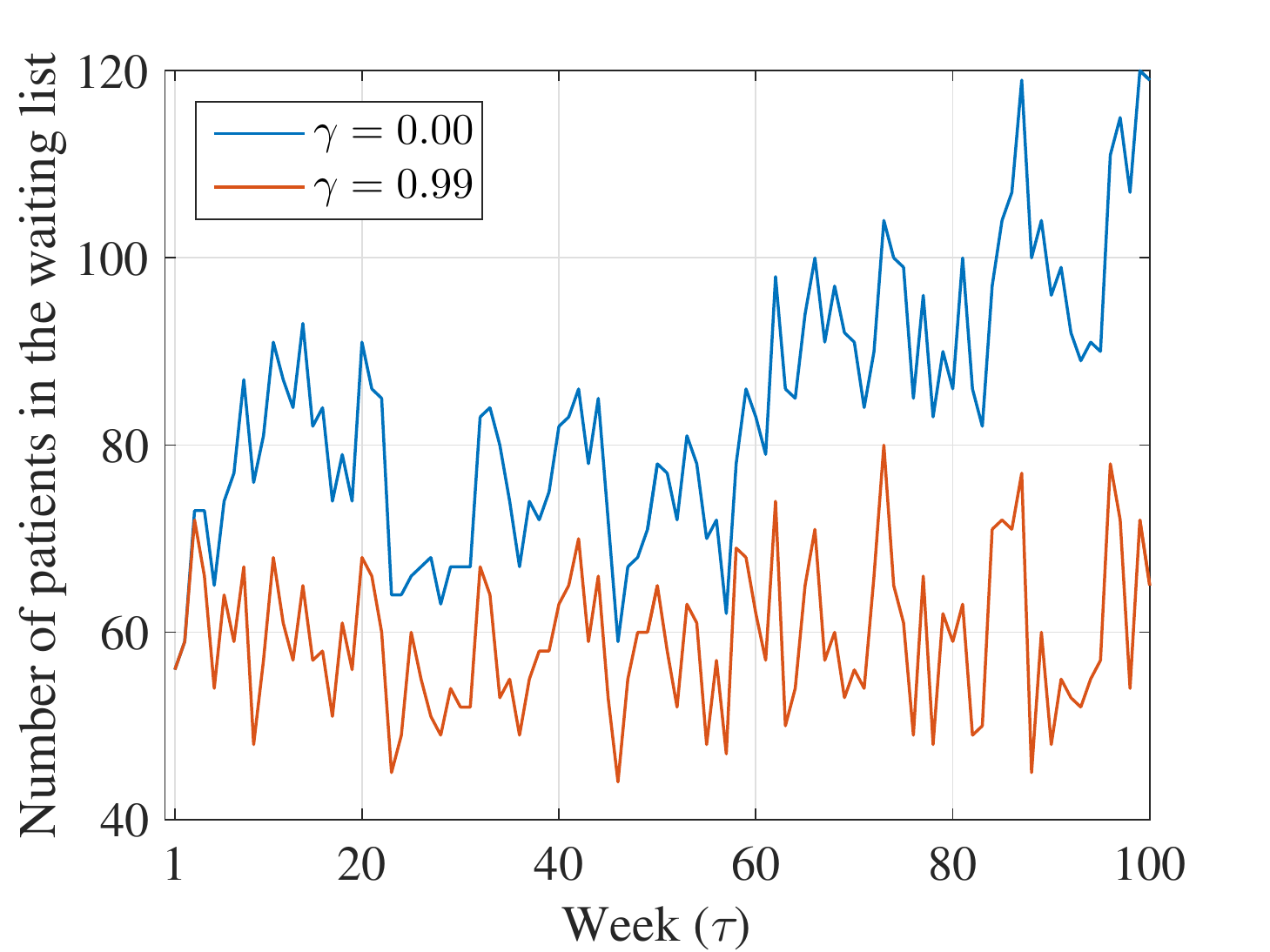}}
	\subfigure[]{\label{FigSubT}\includegraphics[width=2.5in]{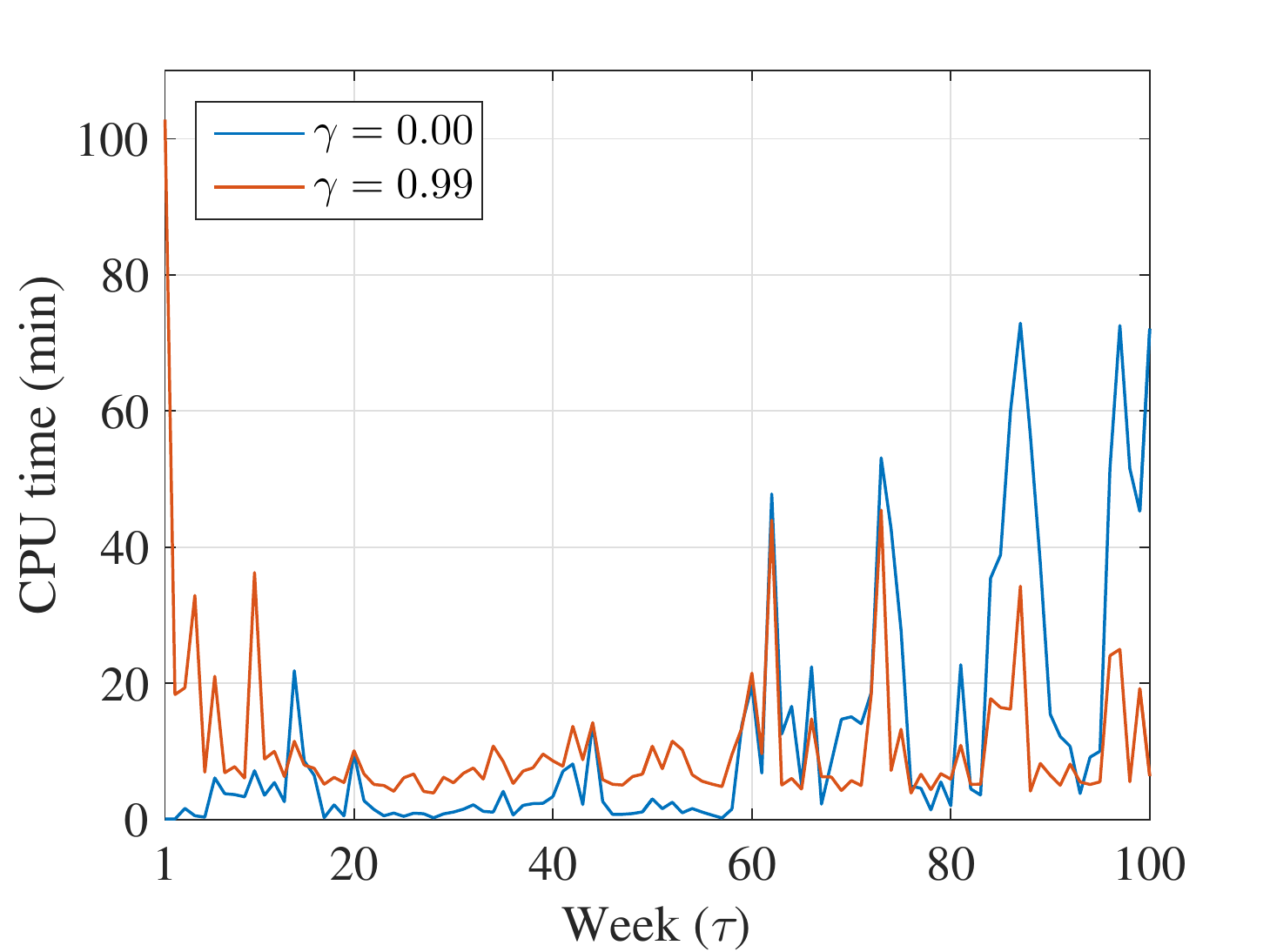}}
	\caption{Comparison of two policies for the multi-specialty {patient admission control} problem}\label{FigMulti}
\end{figure}

\section{Conclusion}\label{SecCon}
This work deals with the {admission control of elective patients}. It considers {uncertainty, dynamic patient priority {scores}, and multiple} capacity constraints. Sequential decisions are made at the end of each week that determine which patients on a waiting list will be treated in the following week. To guarantee equity and efficiency, each patient on the waiting list is assigned a dynamic priority {score} according to his/her urgency {coefficient}, actual waiting time{, and the relative importance of his/her specialty}. The studied {patient admission control} problem is formulated as an infinite-horizon MDP model. The objective is to minimize a cost function that assesses the costs of performing and delaying surgeries as well as the {expected} costs incurred by the over-utilizations of the ORs and SICU. Given that the problem scales of real-life situations are usually very large, solving the MDP model with traditional DP algorithms is computationally intractable. Therefore, we adopt two measures to tackle the curses of dimensionality: first, we perform an intensive structural analysis for the MDP model, reducing the number of actions to be evaluated (and thus drastically reducing the dimensionality of the action space); second, we develop a novel ADP algorithm based on RLS–TD($\lambda$) learning to {tackle} the dimensionalities of the state space and the outcome space. 

{In the numerical experiments}, multiple test problems are solved {to} validate the efficiency and accuracy of the proposed algorithms. {The experimental results of a small-sized test problem show that{, in comparison with conventional DP-based algorithms,} the ADP algorithm consumes significantly less CPU time and only leads to a slight increase in the total costs, {regardless of the changes in problem parameters. Further, our sensitivity analyses on the parameters of the ADP algorithm}, performed on a large-sized CABG test problem, reveal that the patients' waiting times and the total costs decrease in $\lambda$ and $\beta$, while the over-utilizations of ORs and SICU increase. For all the values of $\lambda$ and $\beta$, the resulting ADP policy significantly outperforms the myopic policy in terms of waiting times and total costs. Following this, we solve a multi-specialty patient admission control problem using the proposed ADP algorithm and validate its capability to tackle realistically sized problems. Our experimental results also show that Algorithm \ref{GreedyAction}, proposed in Section \ref{SecAna}, drastically reduces the number of actions to be evaluated and improves the computational efficiency of all the employed algorithms.} 

{In the future, three aspects of this work can be extended for further research. First, {to model patient admission control problems more realistically, stochastic durations and violations of the maximum recommended waiting times can be explicitly incorporated into the MDP model. As the model properties proved in this paper may no longer hold true and the computational burden will increase, further structural analyses and more efficient solution approaches should be studied and applied to the extended MDP model.} Second, we find in this work that the RLS–TD($\lambda$)-based ADP algorithm tends to increase the over-utilization of surgical resources and reduce patients' waiting times. However, the exact impact of the ADP algorithm on the nature of the resulting policy is unclear, since the interactions between the MDP model and the ADP algorithm are highly complex. Hence, an in-depth theoretical analysis {in this respect} can be studied as part of future research.} {Third, the MDP model {for patient admission control} can easily be combined with a mathematical programming model which optimizes the intra-week surgery-to-block assignments. For example, \shortciteA{zhang2019two} have addressed a relatively simple advance surgery scheduling problem (single specialty, single OR) using MDP {and} stochastic programming. {They} show that the resulting surgery schedules significantly outperform those of a pure stochastic programming model. Obviously, the MDP model {proposed in this paper} has big potential to improve the long-term quality of advance surgery scheduling, as the {future costs} are mostly ignored in the commonly used pure mathematical programming models.}

{
\section*{Acknowledgements}
The authors gratefully acknowledge the financial support granted by the China Scholarship Council (CSC, Grant No. 201604490106).}

\bibliographystyle{apacite}
\bibliography{ManuscriptCOR}
\numberwithin{equation}{section}
\numberwithin{table}{section}
\begin{appendices}
	\section{Proof of Lemma \ref{Lemmamin}}\label{ProofLemmamin}
	\begin{proof}
		Let $x_1=\mathop{\arg\min}f(x)$ and $x_2=\mathop{\arg\min}g(x)$, then $g(x_1)\geqslant g(x_2)$, hence
		\begin{equation}
		\min f(x)-\min g(x)=f(x_1)-g(x_2)\geqslant f(x_1)-g(x_1)\geqslant \min[f(x)-g(x)]
		\end{equation}
		thus the lemma is proved.
	\end{proof}
	\section{Proof of Proposition \ref{ConvexC}}\label{ProofConvexC}
	\begin{proof}
		\begin{itemize}
			\item [(i)] If $a_0(s+\Delta_{j'u'w'})\in A(s)$, then by Lemma \ref{Lemmamin},
			\begin{equation}
			\begin{split}
			C_0(s+\Delta_{j'u'w'})-C_0(s)&=\min_{a\in A(s)}C(s+\Delta_{j'u'w'},a)-\min_{a\in A(s)}C(s,a)\\
			&\geqslant \min_{a\in A(s)}[C(s+\Delta_{j'u'w'},a)-C(s,a)]={c_d}v_{j'}u'w'>0
			\end{split}
			\end{equation}\par 
			Otherwise, $a_0(s+\Delta_{j'u'w'})\!\in\! A(s+\Delta_{j'u'w'})\setminus A(s)$. Let $s\!=\!\{n_{juw}\}$, $a_0(s)\!=\!\{m^0_{juw}\}$ and $a_0(s+\Delta_{j'u'w'})\!=\!\{m'_{juw}\}$, then we have $m'_{j'u'w'}=n_{j'u'w'}+1$ and $a_0(s+\Delta_{j'u'w'})-\Delta_{j'u'w'}\in A(s)$. Since $a_0(s)$ minimizes $C(s,a)$, then
			\begin{equation}
			\begin{split}
			&C_0(s)\!-\!C[s,a_0(s\!+\!\Delta_{j'u'w'})\!-\!\Delta_{j'u'w'}]\\
			=&C_0(s)\!-\!C_p[s,a_0(s\!+\!\Delta_{j'u'w'})]\!+\!({c_b}\!-\!{c_d})v_{j'}u'w'\!-\!C_h[a_0(s\!+\!\Delta_{j'u'w'})\!-\!\Delta_{j'u'w'}]\leqslant0
			\end{split}
			\end{equation}
			As a result,
			\begin{equation}
			C_p[s,a_0(s\!+\!\Delta_{j'u'w'})]\!-\!C_0(s)\!\geqslant\!({c_b}\!-\!{c_d})v_{j'}u'w'\!-\!C_h[a_0(s\!+\!\Delta_{j'u'w'})\!-\!\Delta_{j'u'w'}]
			\end{equation}
			Given that $C_h[a_0(s+\Delta_{j'u'w'})]\geqslant C_h[a_0(s+\Delta_{j'u'w'})-\Delta_{j'u'w'}]$, we have
			\begin{equation}
			\begin{split}
			&C_0(s+\Delta_{j'u'w'})-C_0(s)\\
			=&C_p[s+\Delta_{j'u'w'},a_0(s+\Delta_{j'u'w'})]+C_h[a_0(s+\Delta_{j'u'w'})]-C_0(s)\\
			=&C_p[s,a_0(s+\Delta_{j'u'w'})]+{c_d}v_{j'}u'w'+C_h[a_0(s+\Delta_{j'u'w'})]-C_0(s)\\
			\geqslant&{c_b}v_{j'}u'w'+C_h[a_0(s+\Delta_{j'u'w'})]-C_h[a_0(s+\Delta_{j'u'w'})-\Delta_{j'u'w'}]>0
			\end{split}
			\end{equation}
			As $C_0(s+\Delta_{j'u'w'})-C_0(s)>0$ holds in all the possible cases, $C_0(s)$ is increasing in $s$. 
			\item[(ii)] By Definition \ref{pp}, if $a=\{m_{juw}\}$ and $a'=\{m'_{juw}\}$ are comparable, then $\sum_{u=1}^{U_j}\sum_{w=1}^{W_{ju}}m_{juw}=\sum_{u=1}^{U_j}\sum_{w=1}^{W_{ju}}m'_{juw}$ holds for any specialty $j$. Further, according to the model descriptions presented in Section \ref{SecModel}, patients from the same specialty have the same expectations of surgery duration and LOS. Hence by the cost function (\ref{CostDef}), $C_h(a)=C_h(a')$.
			\item[(iii)] Let $\sigma=s-\Delta_{j'u'w'}=s'-\Delta_{j'u''w''}$ and $u'w'<u''w''$, then $P(s)<P(s')$. Let $\sigma=\{\sigma_{juw}\}$, $a_0(s)=\{m^s_{juw}\}$ and $a_0(s')=\{m^{s'}_{juw}\}$, then we discuss all the possible cases: \par		
			If $a_0(s')\in A(s')\setminus A(\sigma)$, then $m^{s'}_{j'u''w''}=\sigma_{j'u''w''}+1$ and $m^{s'}_{j'u'w'}\leqslant\sigma_{j'u'w'}$, thus $a'=a_0(s')-\Delta_{j'u''w''}+\Delta_{j'u'w'}\in A(s)$. Since $C_h(a')=C_h[a_0(s')]$ by (ii) and $C_0(s)=\min_{a\in A(s)}C(s,a)$, then
			\begin{equation}
			\begin{split}
			&C_0(s')-C_0(s)=C_p[s',a_0(s')]+C_h[a_0(s')]-C_0(s)\\
			=&C_p(s,a')+{c_b}v_{j'}(u''w''-u'w')+C_h(a')-C_0(s)>C(s,a')-C_0(s)\geqslant0
			\end{split}
			\end{equation}\par
			Otherwise, $a_0(s')\in A(\sigma)$. We suppose that $a_0(s)\in A(s)\setminus A(\sigma)$, then $m^s_{j'u'w'}=\sigma_{j'u'w'}+1$ and $m^s_{j'u''w''}\leqslant\sigma_{j'u''w''}$, thus $a'=a_0(s)-\Delta_{j'u'w'}+\Delta_{j'u''w''}\in A(s')$. Since $C_0(s')=\min_{a\in A(s')}C(s',a)$, and $C_h(a')=C_h[a_0(s)]$ by (i), then
			\begin{equation}
			\begin{split}
			C[s,a_0(s)]-C[s,a_0(s')]&=C(s',a')+({c_b}-{c_d})v_{j'}(u'w'-u''w'')-C[s',a_0(s')]\\
			&>C(s',a')-C_0(s')\geqslant0
			\end{split}
			\end{equation}
			As $C[s,a_0(s)]-C[s,a_0(s')]>0$ contradicts the fact that $a_0(s)$ minimizes $C(s,a)$, $a_0(s)\in A(s)\setminus A(\sigma)$ does not hold, hence $a_0(s)\in A(\sigma)$ holds when $a_0(s')\in A(\sigma)$. Then by Lemma \ref{Lemmamin}, we have
			\begin{equation}
			C_0(s')-C_0(s)\geqslant\min_{a\in A(\sigma)}[C(s',a)-C(s,a)] ={c_d}v_{j'}(u''w''-u'w')>0
			\end{equation}\par
			To summarize, since $C_0(s')-C_0(s)>0$ holds in all the possible cases, then $C_0(s)$ is increasing in $P(s)$. \qedhere
		\end{itemize}
	\end{proof}
	\section{Proof of Proposition \ref{V}}\label{ProofV}
	\begin{proof}
		\begin{itemize}
			\item [(i)] Since $V_1^{\pi^*}(s)=V^{\pi^*}(s)$, we first prove that $V_n^{\pi^*}(s)$ is increasing in $s$ for $n=1,2,...,N$. The proof is given by backward mathematical induction:\\
			For $n=N$, since $V_{N+1}^{\pi^*}(s')=0$ holds for any $s'\in S$, then $V_N^{\pi^*}(s)=\min_{a\in A(s)}C(s,a)=C_0(s)$. $C_0(s)$ is increasing in $s$ by (i) of Proposition \ref{ConvexC}, so is $V_N^{\pi^*}(s)$.\\
			For $n=k<N$, suppose that $V_{k+1}^{\pi^*}(s)$ is increasing in $s$, then we distinguish the following cases:\par
			If $\pi^*(s+\Delta_{j'u'w'})\in A(s)$, then by Lemma \ref{Lemmamin},
			\begin{equation}
			\begin{split}
			&V_k^{\pi^*}(s+\Delta_{j'u'w'})-V_k^{\pi^*}(s)
			\geqslant\min_{a\in A(s)}[Q^{\pi^*}_k(s+\Delta_{j'u'w'},a)-Q^{\pi^*}_k(s,a)]\\
			=&\min_{a\in A(s)}\left\{{c_d}v_{j'}u'w'\!+\!\gamma\!\sum_{\Psi=\pmb{0}}^{+\infty}\!P_{\Psi}[V_{k+1}^{\pi^*}(G^{s+\Delta_{j'u'w'}}_a\!+\!\Psi)\!-\!V_{k+1}^{\pi^*}(G^s_a\!+\!\Psi)]\right\}\!>\!0
			\end{split}
			\end{equation}\par
			Otherwise, $\pi^*(s+\Delta_{j'u'w'})\!\in\! A(s+\Delta_{j'u'w'})\setminus A(s)$. Let $s\!=\!\{n_{juw}\}$, $\pi^*(s)\!=\!\{m^*_{juw}\}$ and $\pi^*(s+\Delta_{j'u'w'})\!=\!\{m^+_{juw}\}$, then we have $m^+_{j'u'w'}=n_{j'u'w'}+1$ and $\pi^-=\pi^*(s+\Delta_{j'u'w'})-\Delta_{j'u'w'}\in A(s)$, thus
			\begin{equation}
			\begin{split}
				&V^{\pi^*}_k(s+\Delta_{j'u'w'})-V^{\pi^*}_k(s)\\
				=&C[s+\Delta_{j'u'w'},\pi^-\!+\Delta_{j'u'w'}]+\gamma\sum_{\Psi=\pmb{0}}^{+\infty}P_{\Psi}V^{\pi^*}_{k+1}(G^{s+\Delta_{j'u'w'}}_{\pi^-+\Delta_{j'u'w'}}+\Psi)-V^{\pi^*}_k(s)\\
				=&C[s,\pi^-]+{c_b}v_{j'}u'w'+C_h(\pi^-+\Delta_{j'u'w'})-C_h(\pi^-)+\gamma\sum_{\Psi=\pmb{0}}^{+\infty}P_{\Psi}V^{\pi^*}_{k+1}(G^{s}_{\pi^-}+\Psi)-V^{\pi^*}_k(s)\\
				>&C[s,\pi^-]+\gamma\sum_{\Psi=\pmb{0}}^{+\infty}P_{\Psi}V^{\pi^*}_{k+1}(G^{s}_{\pi^-}+\Psi)-V^{\pi^*}_k(s)=Q^{\pi^*}_k(s,\pi^-)-V_k^{\pi^*}(s)\geqslant0
			\end{split}
			\end{equation}
			As $V^{\pi^*}_k(s+\Delta_{j'u'w'})>V^{\pi^*}_k(s)$ holds in all the two possible cases, the induction hypothesis is satisfied, thus $V^{\pi^*}_n(s)$ is increasing in $s$ for $n=1,2,...,N$. Therefore, $V^{\pi^*}(s)=V^{\pi^*}_1(s)$ is increasing in $s$.
			\item [(ii)] We employ backward mathematical induction to prove that $V_n^{\pi^*}(s)$ is increasing in $P(s)$ for $n=1,2,...,N$:\par
			For $n=N$, since $\forall s'\in S$: $V_{N+1}^{\pi^*}(s')=0$, then $V_N^{\pi^*}(s)=\min_{a\in A(s)}C(s,a)=C_0(s)$. $C_0(s)$ is increasing in $P(s)$ by (iii) of Proposition \ref{ConvexC}, so is $V_N^{\pi^*}(s)$. \par
			For $n=k<N$, suppose that $V_{k+1}^{\pi^*}(s)$ is increasing in $P(s)$. Let $\sigma=s-\Delta_{j'u'w'}=s'-\Delta_{j'u''w''}$ and $u'w'<u''w''$, then $P(s)<P(s')$. Similar to the proof of (iii) of Proposition \ref{ConvexC}, the following cases are considered:\par
			If $\pi^*(s')\in A(s')\setminus A(\sigma)$, then $a'=\pi^*(s')-\Delta_{j'u''w''}+\Delta_{j'u'w'}\in A(s)$, and $C_h(a')=C_h[\pi^*(s')]$ by (ii) of Proposition \ref{ConvexC}, hence
			\begin{equation}
			\begin{split}
			&V_k^{\pi^*}(s')-V_k^{\pi^*}(s)=C[s',\pi^*(s')]+\gamma\sum_{\Psi=\pmb{0}}^{+\infty}P_{\Psi}V_{k+1}^{\pi^*}(G^{s'}_{\pi^*(s')}+\Psi)-V_k^{\pi^*}(s)\\
			=&C(s,a')+{c_b}v_{j'}(u''w''-u'w')+\gamma\sum_{\Psi=\pmb{0}}^{+\infty}P_{\Psi}V_{k+1}^{\pi^*}(G^{s}_{a'}+\Psi)-V_k^{\pi^*}(s)>Q^{\pi^*}_k(s,a')-V_k^{\pi^*}(s)\geqslant0
			\end{split}
			\end{equation}\par
			Otherwise, $\pi^*(s')\in A(\sigma)$. Suppose that $\pi^*(s)\in A(s)\setminus A(\sigma)$, then $a'=\pi^*(s)-\Delta_{j'u'w'}+\Delta_{j'u''w''}\in A(s')$. Because $P(G^s_{\pi^*(s')}+\Psi)<P(G^{s'}_{\pi^*(s')}+\Psi)$, then $V_{k+1}^{\pi^*}(G^s_{\pi^*(s')}+\Psi)<V_{k+1}^{\pi^*}(G^{s'}_{\pi^*(s')}+\Psi)$. Given that $C_h[\pi^*(s)]=C_h(a')$ by (ii) of Proposition \ref{ConvexC}, we have
			\begin{equation}
			\begin{split}
			&V_k^{\pi^*}(s)-Q^{\pi^*}_k[s,\pi^*(s')]=C[s,\pi^*(s)]\!-\!C[s,\pi^*(s')]\!+\!\gamma\!\sum_{\Psi=\pmb{0}}^{+\infty}\!\!P_{\Psi}[V_{k+1}^{\pi^*}(G^s_{\pi^*\!(s)}\!+\!\Psi)\!-\!V_{k+1}^{\pi^*}(G^s_{\pi^*\!(s')}\!+\!\Psi)]\\
			>&C(s',a')\!-\!C[s',\pi^*(s')]\!+\!({c_b}\!-\!{c_d})v_{j'}(u'w'\!-\!u''w'')\!+\gamma\!\sum_{\Psi=\pmb{0}}^{+\infty}\!\!P_{\Psi}[V_{k+1}^{\pi^*}(G^{s'}_{a'}\!+\!\Psi)\!-\!V_{k+1}^{\pi^*}(G^{s'}_{\pi^*(s')}\!+\!\Psi)]\\
			>&Q^{\pi^*}(s',a')-V_k^{\pi^*}(s')\geqslant0
			\end{split}
			\end{equation}
			As $V_k^{\pi^*}(s)-Q^{\pi^*}_k[s,\pi^*(s')]>0$ contradicts the fact that $\pi^*(s)$ minimizes $Q^{\pi^*}_k(s,a)$, $\pi^*(s)\in A(s)\setminus A(\sigma)$ does not hold, hence $\pi^*(s)\in A(\sigma)$. Then by Lemma \ref{Lemmamin},
			\begin{equation}
			\begin{split}
			&V_k^{\pi^*}(s')-V_k^{\pi^*}(s)\geqslant\min_{a\in A(\sigma)}[Q^{\pi^*}(s',a)-Q^{\pi^*}(s,a)]\\
			=&\!\min_{a\in A(\sigma)}\!\left\{\!{c_d}v_{j'}(u''w''\!-\!u'w')\!+\!\gamma\!\sum_{\Psi=\pmb{0}}^{+\infty}\!P_{\Psi}[V_{k+1}^{\pi^*}(G^{s'}_a\!+\!\Psi)\!-\!V_{k+1}^{\pi^*}(G^s_a\!+\!\Psi)]\!\right\}\!>\!0
			\end{split}
			\end{equation}
			Since $V_k^{\pi^*}(s')-V_k^{\pi^*}(s)>0$ holds in all the possible cases, the induction hypothesis is satisfied, thus $V_n^{\pi^*}(s)$ is increasing in $P(s)$ for $n=1,2,...,N$, hence $V^{\pi^*}(s)=V_1^{\pi^*}(s)$ is increasing in $P(s)$. \qedhere
		\end{itemize}
	\end{proof}
	\section{Proof of Proposition \ref{pi}}\label{Proofpi}
	\begin{proof}
		Let $s=\{n_{juw}\}$, $\pi^*(s)=\{m^*_{juw}\}$ and $({c_d}-{c_b})v_{j'}u'w'>c_o\bar{d}_{j'}+{c_e}\bar{l}_{j'}$. Suppose that $n_{j'u'w'}-m^*_{j'u'w'}=x>0$, then $a'=\pi^*(s)+x\Delta_{j'u'w'}\in A(s)$. By (\ref{CostDef}), we know that $C_h(a')-C_h[\pi^*(s)]\leqslant (c_o\bar{d}_{j'}+{c_e}\bar{l}_{j'})x$, then
		\begin{equation}
		\begin{split}
		C[s,\pi^*(s)]-C(s,a')
		=&({c_d}-{c_b})v_{j'}u'w'x+C_h[\pi^*(s)]-C_h(a')\\
		\geqslant&[({c_d}-{c_b})v_{j'}u'w'-c_o\bar{d}_{j'}-{c_e}\bar{l}_{j'}]x>0
		\end{split}
		\end{equation}
		Besides, since $G^s_{\pi^*(s)}+\Psi>G^s_{a'}+\Psi$, and $V^{\pi^*}(s)$ is increasing in $s$ by (i) of Proposition \ref{V}, then $V^{\pi^*}(G^s_{\pi^*(s)}+\Psi)-V^{\pi^*}(G^s_{a'}+\Psi)>0$. Therefore, 
		\begin{equation}
		Q^{\pi^*}[s,\pi^*(s)]-Q^{\pi^*}(s,a')=C[s,\pi^*(s)]-C(s,a')+\gamma\sum_{\Psi=\pmb{0}}^{+\infty}P_\Psi[V^{\pi^*}(G^s_{\pi^*(s)}+\Psi)-V^{\pi^*}(G^s_{a'}+\Psi)]>0
		\end{equation}
		Since $Q^{\pi^*}[s,\pi^*(s)]-Q^{\pi^*}(s,a')>0$ contradicts the fact that $\pi^*(s)$ minimizes $Q^{\pi^*}(s,a)$, then $n_{j'u'w'}-m^*_{j'u'w'}=x>0$ does not hold, proving the proposition.\qedhere
	\end{proof}
	\section{Proof of Proposition \ref{PropVQ}}\label{ProofPropVQ}
	\begin{proof}
		Let $a=a'+\Delta_{j'u'w'}-\Delta_{j'u''w''}$ and $u'w'<u''w''$, then $P(a)<P(a')$ and $P(G^s_a+\Psi)>P(G^s_{a'}+\Psi)$, thus $V^{\pi^*}(G^s_a+\Psi)>V^{\pi^*}(G^s_{a'}+\Psi)$ holds by (ii) of Proposition \ref{V}, hence
		\begin{equation}
		Q^{\pi^*}(s,a)-Q^{\pi^*}(s,a')=({c_b}-{c_d})v_{j'}(u'w'-u''w'')+\gamma\sum_{\Psi=\pmb{0}}^{+\infty}P_{\Psi}[V^{\pi^*}(G^s_a+\Psi)-V^{\pi^*}(G^s_{a'}+\Psi)]>0
		\end{equation}
		As $Q^{\pi^*}(s,a)-Q^{\pi^*}(s,a')>0$ holds for $P(a)<P(a')$, the proposition is proved.
	\end{proof}

\end{appendices}
\end{document}